\title[Inversion of adjunction for quotient singularities III]
{Inversion of adjunction for quotient singularities III: semi-invariant case}
\author{Yusuke Nakamura}
\address{Graduate School of Mathematics, Nagoya University, Furo-cho, Chikusa-ku, Nagoya, 464-8602, Japan.}
\email{y.nakamura@math.nagoya-u.ac.jp}
\urladdr{https://sites.google.com/site/ynakamuraagmath/}
\author{Kohsuke Shibata}
\address{School of Engineering, Tokyo Denki University, Adachi-ku, Tokyo 120-8551, Japan.}
\email{shibata.kohsuke@mail.dendai.ac.jp}
\subjclass[2020]{Primary 14E18; Secondary 14E30, 14B05}
\keywords{minimal log discrepancy, arc space, hyperquotient singularity, LSC conjecture, PIA conjecture}
\newtheorem{thm}{Theorem}[section]
\newtheorem{lem}[thm]{Lemma}
\newtheorem{cor}[thm]{Corollary}
\newtheorem{prop}[thm]{Proposition}
\theoremstyle{definition}
\newtheorem{defi}[thm]{Definition}
\newtheorem{conj}[thm]{Conjecture}
\theoremstyle{remark}
\newtheorem{rmk}[thm]{Remark}
\newtheorem*{ackn}{Acknowledgements}
\begin{document}

\maketitle

\begin{abstract}
We prove the precise inversion of adjunction formula for finite linear group quotients of complete intersection varieties defined by semi-invariant equations. 
As an application, we prove the semi-continuity of minimal log discrepancies for them. 
These results extend the results in our first paper, where we prove the same results for finite linear group quotients of complete intersection varieties defined by ``invariant equations". 
\end{abstract}

\section{Introduction}
The minimal log discrepancy is an important invariant of singularities in birational geometry.
In \cite{Amb99}, Ambro proposed the LSC (lower semi-continuity) conjecture for minimal log discrepancies.
It is known that the LSC conjecture and the ACC (ascending chain condition) conjecture imply the conjecture of termination of flips \cite{Sho04}.
In this paper, we always work over an algebraic closed field $k$ of characteristic zero. 

\begin{conj}[LSC conjecture]\label{conj:LSC}
Let $(X, \mathfrak{a})$ be a log pair, and let $|X|$ denote the set of all closed points of $X$ with the Zariski topology. 
Then the function 
\[
|X| \to \mathbb{R}_{\ge 0} \cup \{ - \infty \}; \quad x \mapsto \operatorname{mld}_x (X,\mathfrak{a})
\]
is lower semi-continuous. 
\end{conj}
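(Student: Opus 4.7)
The plan is to approach this via the arc-space characterization of minimal log discrepancies, in the spirit of Ein–Mustață–Yasuda. On a smooth ambient variety $Y$, the invariant $\operatorname{mld}_y(Y,\mathfrak{b})$ can be computed as an infimum over codimensions of contact cylinders in the arc space $Y_\infty$, and those codimensions behave upper-semi-continuously in $y$; on a smooth $X$ this translates directly into lower semi-continuity of the mld. So the serious content is in the singular case, and the strategy is to reduce the singular statement to the smooth one through a suitable cover or embedding.

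First I would reduce to the case that $\mathfrak{a} = \mathfrak{a}_0^t$ for a single coherent ideal $\mathfrak{a}_0$ and $X$ is reduced and affine; both reductions are local, and $\operatorname{mld}_x$ is continuous in the exponent $t$ on compact intervals. For the singular case I would embed $X$ locally in a smooth $Y$ and attempt to express $\operatorname{mld}_x(X,\mathfrak{a})$ in terms of arc-theoretic data on $Y$. This works cleanly when $X$ is a local complete intersection, via the Ein–Mustață formula, and in the setting of the present paper it should extend, by an equivariant Denef–Loeser change of variables, to quotients $X = \widetilde{X}/G$ where $\widetilde{X} \subset \mathbb{A}^n$ is a $G$-stable complete intersection cut out by semi-invariant equations. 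In that case I would lift $X$ to $\widetilde{X}$, rewrite $\operatorname{mld}_x(X,\mathfrak{a})$ as an infimum over codimensions of $G$-indexed contact loci in $\mathbb{A}^n_\infty$, and deduce lower semi-continuity from the constructibility of those loci and the upper-semi-continuity of their codimensions as $x$ varies.

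The main obstacle — the reason this conjecture is still open in general — is that for an arbitrary singular $X$ no formula for $\operatorname{mld}_x(X,\mathfrak{a})$ is known that is manifestly well-behaved in $x$: the set of exceptional divisors over $X$ that could compute the mld need not be uniformly bounded as $x$ specializes, and a single log resolution of $(X,\mathfrak{a})$ only computes the mld at the generic point of the chosen locus, not at closed specializations. The present paper sidesteps this difficulty by working in the structured setting above, where the equivariant arc-space machinery does yield such a formula; the technical heart of the argument is then an inversion of adjunction result controlling how $\operatorname{mld}_x$ at a special closed point $x$ compares with the mld at the generic point of the component through $x$, and it is precisely this comparison that I expect to be the hardest step.
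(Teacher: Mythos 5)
The statement you were asked about is Conjecture \ref{conj:LSC} itself, which is an \emph{open conjecture}: the paper does not prove it for an arbitrary log pair, and neither do you. You correctly flag this in your last paragraph, so there is no error of overclaiming; but to be clear, what the paper actually establishes is only Theorem \ref{thm:LSC_general}, namely LSC for quotients $Y=\overline{Y}/G$ of complete intersections cut out by $G$-semi-invariant equations. Your proposal should therefore be read as a sketch of that special case, and on that reading it is broadly aligned with the paper but diverges at two points worth noting.

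First, the paper's proof of Theorem \ref{thm:LSC_general} is a two-step reduction: it uses the precise inversion-of-adjunction formula $\operatorname{mld}_y(Y,\mathfrak{b})=\operatorname{mld}_y\bigl(X,(f_1^d\cdots f_c^d)^{1/d}\mathfrak{a}\bigr)$ of Theorem \ref{thm:PIA2} to transfer the problem from $Y$ to the pure quotient singularity $X=\mathbb{A}^N_k/G$, and then simply cites the known LSC for quotient singularities from \cite{Nak16}. You instead propose to run the semicontinuity argument directly on $G$-indexed contact loci in $\mathbb{A}^N_\infty$; that amounts to re-proving the quotient-singularity case of \cite{Nak16} rather than quoting it, which is workable but not what the paper does, and you would still need the PIA-type transfer to get from $Y$ down to $X$ in the first place. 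Second, you describe the technical heart as a comparison between $\operatorname{mld}$ at a special closed point and at the generic point of the component through it. That is not the hard step here. The hard step is the adjunction formula itself (Theorems \ref{thm:PIA} and \ref{thm:PIA2}), proved by comparing codimensions of cylinders in $\overline{B}^{(\gamma)}_\infty$ and $\overline{A}^{(\gamma)}_\infty$ via the modified Denef--Loeser schemes $\overline{B}^{(\gamma)}$ and the twisted sheaf $L_4$; the new difficulty relative to \cite{NS22} is precisely the correction term $w_\gamma(\mathbf{f})$ forced by semi-invariance, which your sketch does not mention. Both points would need to be supplied before your outline becomes a proof of even the special case.
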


In \cite{Amb99}, Ambro proves the LSC conjecture when $\dim X = 3$.
In \cite{EMY03}, Ein, Musta\c{t}\u{a} and Yasuda prove it when $X$ is smooth. In \cite{EM04}, Ein and Musta\c{t}\u{a} generalize it to normal local complete intersection varieties. 
In \cite{Nak16}, the first author proves it for quotient singularities.
In \cite{NS22}, the authors prove the conjecture for a finite linear group quotient of a complete intersection variety defined by invariant equations.
In \cite{NS2}, the authors generalize it to a non-linear group action.

The main purpose of this paper is to relax the assumptions of ``invariant equations" in the results in \cite{NS22} and extend them to the case of ``semi-invariant equations". 
For a finite subgroup $G\subset {\rm GL}_N(k)$, we say that $f \in k[x_1, \ldots , x_N]$ is \textit{$G$-semi-invariant} if for any $\gamma \in G$, there exists $s \in k^{\times}$ such that $\gamma (f) = s f$.
In this paper, we show that the LSC conjecture is true for a finite linear group quotient of a complete intersection variety defined by semi-invariant equations. 

The advantage of being able to handle semi-invariant equations is that it enables us to treat a class containing 3-dimensional terminal singularities. 
According to the classification by Reid and Mori (\cites{Rei80, Rei81, Mor85}), 
3-dimensional terminal singularities are known to be finite quotients of smooth points or cDV points. 
As seen in Mori's classification list (\cite{Mor85}), semi-invariant equations indeed appear alongside invariant equations.

\begin{thm}[{$=$ Theorem \ref{thm:LSC_general}}]\label{thm:LSC_intro}
Let $G \subset {\rm GL}_N(k)$ be a finite subgroup which does not contain a pseudo-reflection. 
Let $X := \mathbb{A}_k^N / G$ be the quotient variety. 
Let $Z \subset X$ be the minimum closed subset such that $\mathbb{A}_k^N \to X$ is \'{e}tale outside $Z$. 
Let $\overline{Y}$ be a subvariety of $\mathbb{A}_k^N$ of codimension $c$. 
Suppose that $\overline{Y} \subset \mathbb{A}_k^N$ is defined by $c$ $G$-semi-invariant equations 
$f_1, \ldots , f_c \in k[x_1, \ldots, x_N]$. 
Let $Y := \overline{Y}/ G$ be the quotient variety. 
We assume that $Y$ has only klt singularities and $\operatorname{codim} _Y (Y \cap Z) \ge 2$. 
Then, for any $\mathbb{R}$-ideal sheaf $\mathfrak{b}$ on $Y$, the function 
\[
|Y| \to \mathbb{R}_{\ge 0} \cup \{ - \infty \}; \quad y \mapsto \operatorname{mld}_y(Y,\mathfrak{b})
\]
is lower semi-continuous, where we denote by $|Y|$ the set of all closed points of $Y$ with the Zariski topology. 
\end{thm}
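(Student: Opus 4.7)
The plan is to derive Theorem~\ref{thm:LSC_intro} from the LSC for quotient singularities $X = \mathbb{A}_k^N/G$ (proved in \cite{Nak16}) via a precise inversion-of-adjunction formula along the embedding $Y \hookrightarrow X$. This is the strategy used in \cite{NS22} in the invariant case; the main new content (advertised by the paper's title) is the extension of the inversion-of-adjunction formula to the case where the generators $f_i$ are only $G$-semi-invariant. Once such a formula, reading schematically $\operatorname{mld}_y(Y, \mathfrak{b}) = \operatorname{mld}_y(X, \mathfrak{a})$ for an $\mathbb{R}$-ideal $\mathfrak{a}$ on $X$ built canonically from $\mathfrak{b}$, the embedding data, and the characters $\chi_i$, is in hand, LSC on $Y$ follows at once from LSC on $X$, the klt assumption on $Y$ guaranteeing that the relevant mlds are not identically $-\infty$.

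For the inversion of adjunction itself, the approach is to adapt the arc-space computation of \cite{NS22}, expressing $\operatorname{mld}_y(Y, \mathfrak{b})$ as an infimum of codimensions of $G$-invariant cylinders in the jet space of $\overline{Y}$. The key observation is that although each $f_i$ is only semi-invariant, the ideal $(f_1, \ldots, f_c) \subset k[x_1, \ldots, x_N]$ is $G$-stable, so the contact cylinders $\{\gamma : \operatorname{ord}_t f_i(\gamma) \ge n\} \subset J_\infty \mathbb{A}_k^N$ remain $G$-invariant and the equivariant-cylinder framework of \cite{NS22} still applies. The new wrinkle is that the order function $\operatorname{ord}_t f_i(\gamma)$ transforms under the $G$-action by the character $\chi_i$ -- equivalently, $f_i$ is a section of the line bundle $\mathcal{L}_{\chi_i}$ on the quotient stack $[\mathbb{A}_k^N/G]$ rather than an honest function on $X$ -- and this produces a character-weight correction to the final formula. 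The hypothesis $\operatorname{codim}_Y(Y \cap Z) \ge 2$ is then used to ensure that the identity proved on the \'etale locus of $\mathbb{A}_k^N \to X$ extends across the non-\'etale locus $Y \cap Z$ without losing codimension-one information.

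The principal obstacle is to pin down precisely how the characters $\chi_i$ enter the formula. In the invariant case the ideal of $\overline{Y}$ descends to an honest ideal on $X$, giving a clean expression for the correction ideal $\mathfrak{a}$; in the semi-invariant case the $f_i$ only give sections of line bundles, and one must incorporate this twist into $\mathfrak{a}$ in such a way that the arc-space codimension count still balances exactly. Getting the bookkeeping right, so that the resulting formula is an equality rather than merely an inequality, is the main technical work, and is where the no-pseudo-reflection hypothesis on $G$ will be needed in order to prevent spurious ramification contributions from the quotient map $\mathbb{A}_k^N \to X$ from entering the character-twist correction.
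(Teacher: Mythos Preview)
Your overall strategy is exactly the paper's: deduce Theorem~\ref{thm:LSC_intro} from the LSC result for the quotient singularity $X=\mathbb{A}_k^N/G$ in \cite{Nak16} via a precise PIA formula (Theorem~\ref{thm:PIA2_intro}), and prove the latter by an arc-space argument extending \cite{NS22}. The ``principal obstacle'' you flag is resolved in the paper simply by observing that $f_i^d\in R^G$, so the correction $\mathbb{R}$-ideal on $X$ is $(f_1^d\cdots f_c^d)^{1/d}$; the character data then enters the arc-space side not as a transformation of $\operatorname{ord}_t f_i$ (which is in fact $G$-invariant), but as the shift $w_\gamma(\mathbf f)=\sum_i w_\gamma(f_i)$ arising from the twisted-arc construction $\lambda_\gamma^*$ of Denef--Loeser.
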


Motivated by the reduction of Theorem \ref{thm:LSC_intro} to the case of quotient singularities, 
this paper also investigates the PIA conjecture for quotient singularities.

\begin{conj}[PIA conjecture, {\cite{92}*{17.3.1}}]\label{conj:PIA}
Let $(X, \mathfrak{a})$ be a log pair, and let $D$  be a normal Cartier prime divisor.
Let $x \in D$ be a closed point. Suppose that $D$ is not contained in the cosupport of the $\mathbb{R}$-ideal sheaf $\mathfrak{a}$. 
Then 
\[
\operatorname{mld}_x \bigl( X,D, \mathfrak{a} \mathcal{O}_X \bigr) = \operatorname{mld}_x (D, \mathfrak{a} \mathcal{O}_D)
\]
holds. 
\end{conj}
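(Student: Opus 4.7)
The plan is to tackle the two inequalities separately, since the stated equality is a conjunction of a standard adjunction bound and a much deeper inversion bound.

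For the easy direction $\operatorname{mld}_x(X, D, \mathfrak{a}\mathcal{O}_X) \leq \operatorname{mld}_x(D, \mathfrak{a}\mathcal{O}_D)$, I would argue by lifting divisors from $D$ to $X$. Take a prime divisor $F$ over $D$ with center $x$ that approaches the infimum defining $\operatorname{mld}_x(D, \mathfrak{a}\mathcal{O}_D)$. Choose a common log resolution $\pi \colon \widetilde{X} \to X$ of $(X, D, \mathfrak{a})$ such that the strict transform $\widetilde{D}$ is smooth, and if necessary blow up further along smooth centers contained in $\widetilde{D}$ so that $F$ is realized as a divisor on $\widetilde{D}$. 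Picking a prime divisor $E$ on the ambient blow-up whose restriction to $\widetilde{D}$ equals $F$, the classical adjunction $K_{\widetilde{D}} = (K_{\widetilde{X}} + \widetilde{D})|_{\widetilde{D}}$, combined with the assumption that $D$ is not contained in the cosupport of $\mathfrak{a}$, yields the discrepancy identity $a_E(X, D, \mathfrak{a}\mathcal{O}_X) = a_F(D, \mathfrak{a}\mathcal{O}_D)$, from which the inequality follows.

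The reverse inequality $\operatorname{mld}_x(X, D, \mathfrak{a}\mathcal{O}_X) \geq \operatorname{mld}_x(D, \mathfrak{a}\mathcal{O}_D)$ is the genuine inversion of adjunction. Here I would work in the arc space in the spirit of \cite{EMY03, EM04}: minimal log discrepancies admit expressions as infima of codimensions of contact loci in $X_\infty$, corrected by Jacobian/Nash orders, and the Cartier hypothesis on $D$ makes $D_\infty \hookrightarrow X_\infty$ a regular embedding well suited to such codimension comparisons. Given a divisorial valuation $E$ over $X$ with center $x$ realizing the infimum, the strategy is to stratify the associated contact locus by contact order with $D$, restrict to $D_\infty$, and produce a cylinder in $D_\infty$ whose codimension bounds $a_E(X, D, \mathfrak{a}\mathcal{O}_X)$ from below by some $a_F(D, \mathfrak{a}\mathcal{O}_D)$.

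The hard part, and the reason the PIA conjecture is still open in this generality, is that the change-of-variables formula for arc spaces is well behaved only when $X$ belongs to a restricted class of singularities: smooth, local complete intersection, or finite quotient of such. Outside these classes, divisors $E$ over $X$ whose centers on a log resolution meet the singular locus of $\widetilde{D}$ are not controlled by naive adjunction, and the codimension--discrepancy dictionary acquires correction terms that are difficult to compare between $X$ and $D$. Any realistic attempt at the full conjecture would therefore locally embed $X$ into a smooth ambient or realize it as a quotient, transport the cylinder stratification to the ambient, apply one of the known cases, and descend; I expect the crux to be precisely this descent, which in the quotient setting is handled by the invariant jet-space techniques of \cite{NS22}.
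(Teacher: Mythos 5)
The first thing to be clear about is that the statement you were given is Conjecture \ref{conj:PIA} itself, which is open in general: the paper does not prove it, and neither does your text. What the paper actually establishes are special cases — Theorems \ref{thm:PIA}, \ref{thm:PIA2}, \ref{thm:PIA3} — for finite linear group quotients of complete intersections cut out by semi-invariant equations, and your proposal, to its credit, says exactly this rather than pretending to close the inversion inequality. Your treatment of the easy half is essentially correct: lifting a divisor $F$ over $D$ to a divisor $E$ over $X$ on a log resolution where the strict transform of $D$ is smooth, and using adjunction $K_{\widetilde D}=(K_{\widetilde X}+\widetilde D)|_{\widetilde D}$ together with the hypothesis that $D$ is not contained in the cosupport of $\mathfrak a$, gives $a_E(X,D,\mathfrak a\mathcal O_X)= a_F(D,\mathfrak a\mathcal O_D)$ and hence $\operatorname{mld}_x(X,D,\mathfrak a\mathcal O_X)\le\operatorname{mld}_x(D,\mathfrak a\mathcal O_D)$; this is the same mechanism that, in the paper's special case, is dispatched in one line at the start of the proof of Theorem \ref{thm:PIA} via Lemma \ref{lem:property B_i}(3).

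For the hard direction, your proposed route — express minimal log discrepancies through codimensions of contact loci, compare cylinders attached to $D$ and to $X$, and in the quotient setting descend through Denef--Loeser-type jet techniques — is indeed how the paper proves the cases it can prove: it reduces to the image of the origin by passing to the stabilizer (proof of Theorem \ref{thm:PIA2}), replaces the divisors by the $\mathbb R$-ideal $(f_1^d\cdots f_c^d)^{1/d}$ using Lemma \ref{lem:property B_i}(2), and then compares codimensions of contact loci in $\overline{A}^{(\gamma)}_{\infty}$ and $\overline{B}^{(\gamma)}_{\infty}$ via Theorem \ref{thm:mld_hyperquot}, Lemma \ref{lem:zure} and Lemma \ref{lem:EM8.4}; the Cartier-divisor statement Theorem \ref{thm:PIA3} then follows by applying Theorem \ref{thm:PIA} twice. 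The genuine gap in your write-up is therefore unavoidable rather than a misstep: the inversion inequality is only sketched, and outside the smooth, local complete intersection, and (hyper)quotient classes no argument exists here or elsewhere. If this were to be spliced into a paper, the general statement must remain labelled a conjecture, with your outline presented as a description of the known strategy and its scope, not as a proof.
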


In \cite{EMY03}, Ein, Musta\c{t}\u{a} and Yasuda prove the PIA conjecture for a smooth variety. 
In \cite{EM04}, Ein and Musta\c{t}\u{a} generalize it to a normal local complete intersection variety. 
In \cite{NS22}, the authors prove the conjecture for a finite linear group quotient of a complete intersection variety defined by invariant equations.
In \cite{NS2}, the authors generalize it to a non-linear group action.

In this paper, we show that the PIA conjecture holds for a finite linear group quotient of a complete intersection variety defined by semi-invariant equations.

\begin{thm}[{$=$ Theorem \ref{thm:PIA3}}]\label{thm:PIA_intro}
Let $G \subset {\rm GL}_N(k)$ be a finite subgroup which does not contain a pseudo-reflection. 
Let $X := \mathbb{A}_k^N / G$ be the quotient variety. 
Let $Z \subset X$ be the minimum closed subset such that $\mathbb{A}_k^N \to X$ is \'{e}tale outside $Z$. 
Let $\overline{Y}$ be a subvariety of $\mathbb{A}_k^N$ of codimension $c$. 
Suppose that $\overline{Y} \subset \mathbb{A}_k^N$ is defined by $c$ $G$-semi-invariant equations 
$f_1, \ldots , f_c \in k[x_1, \ldots, x_N]$. 
Let $Y := \overline{Y}/ G$ be the quotient variety. 
We assume that $Y$ has only klt singularities and $\operatorname{codim} _Y (Y \cap Z) \ge 2$. 
Suppose that $\operatorname{codim}_D (D \cap Z) \ge 2$ and that $D$ is klt at a closed point $y \in D$. 
Then, for any $\mathbb{R}$-ideal sheaf $\mathfrak{a}$ on $Y$ with $\mathfrak{a} \mathcal{O}_D \not = 0$, we have 
\[
\operatorname{mld}_y \left( Y,D, \mathfrak{a} \right) = 
\operatorname{mld}_y \left( D, \mathfrak{a} \mathcal{O}_D \right). 
\]
\end{thm}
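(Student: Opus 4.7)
The plan is to derive the theorem from the precise inversion-of-adjunction formula for finite linear quotients of semi-invariant complete intersections proved elsewhere in this paper (the formula underlying Theorem~\ref{thm:LSC_intro}, whose invariant-equation version is the main result of \cite{NS22}), combined with the classical PIA for local complete intersection varieties of Ein--Musta\c{t}\u{a} \cite{EM04}.

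First I would \emph{lift $D$ to the cover $\overline{Y}$}. Since $D$ is a prime Cartier divisor on $Y=\overline{Y}/G$, its preimage $\overline{D}\subset\overline{Y}$ under the quotient morphism $\pi\colon\overline{Y}\to Y$ is $G$-invariant and Cartier, hence locally around $\pi^{-1}(y)$ it is cut out by a $G$-semi-invariant function $\bar g$. Lifting $\bar g$ to a semi-invariant $g\in k[x_1,\ldots,x_N]$ by an equivariant Nakayama argument, I realize $\overline{D}$ as a codimension-$(c+1)$ complete intersection in $\mathbb{A}_k^N$ defined by the $G$-semi-invariant equations $f_1,\ldots,f_c,g$. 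Thus $D=\overline{D}/G$ fits the framework of the present paper, and by the theorem's hypotheses it satisfies the klt condition and $\operatorname{codim}_D(D\cap Z)\geq 2$ needed to apply the main inversion-of-adjunction formula to $D$ itself.

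Next I would apply the precise inversion-of-adjunction formula to each side. On the left, $\operatorname{mld}_y(Y,D,\mathfrak{a})$ becomes a $G$-equivariant minimum of quantities computed from jets of $\mathbb{A}_k^N$ through $\pi^{-1}(y)$, with weights depending on $f_1,\ldots,f_c$, the boundary contributed by the lift of $D$, and the pull-back of $\mathfrak{a}$. On the right, $\operatorname{mld}_y(D,\mathfrak{a}\mathcal{O}_D)$ becomes the analogous equivariant minimum with the $(c+1)$-st equation $g$ included and no extra boundary. The equality of these two minima then reduces, arc by arc, to the classical PIA of \cite{EM04} for the Cartier divisor $\overline{D}\subset\overline{Y}$ inside the ambient local complete intersection $\overline{Y}$.

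The main obstacle I anticipate is the character bookkeeping needed to make the two previous steps interact correctly: one has to check that the characters $\chi_1,\ldots,\chi_c$ of the $f_i$ and the character $\chi_g$ of $g$ enter the inversion-of-adjunction formula symmetrically on both sides, so that the age/Reid--Tai correction arising from passing from $\overline{Y}$ to $Y$ is identical whether one treats the lift of $D$ as a boundary divisor (the $Y$-side) or as an additional defining equation (the $D$-side). The assumption that $G$ contains no pseudo-reflections, together with $\operatorname{codim}_D(D\cap Z)\geq 2$, should force any discrepancy between the two bookkeepings to live in codimension $\geq 2$ on $D$, hence vanish.
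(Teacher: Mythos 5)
Your opening move is the same as the paper's: since $D$ is Cartier on $Y$, take a local defining function $\overline{g}\in\mathcal{O}_{Y,y}$, lift it to $g\in\mathcal{O}_{X,y}$, and realize $\overline{D}$ as the complete intersection cut out by the $c+1$ semi-invariant equations $f_1,\ldots,f_c,g$. (Two small corrections here: you do not need an equivariant Nakayama argument or a merely semi-invariant lift --- $\overline{g}$ pulls back from the quotient, so $g$ can be taken $G$-invariant, whence $w_\gamma(g)=0$ and the ``character bookkeeping'' you worry about in your last paragraph is vacuous; and one works with local equations rather than global polynomials, which the framework permits.) From that point on, however, the paper does not compare the two arc-space formulas against each other at all. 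It simply applies Theorem \ref{thm:PIA} twice, and both applications land on the \emph{same} intermediate expression on $X$:
\[
\operatorname{mld}_y(Y,D,\mathfrak{a})
=\operatorname{mld}_y\bigl(Y,(g\cdot\mathfrak{b})\mathcal{O}_Y\bigr)
=\operatorname{mld}_y\bigl(X,(f_1^d\cdots f_c^d\,g^d)^{\frac1d}\mathfrak{b}\bigr)
=\operatorname{mld}_y(D,\mathfrak{b}\mathcal{O}_D),
\]
the middle equality using the $c$ equations $f_1,\ldots,f_c$ with ideal $g\cdot\mathfrak{b}$, and the last using the $c+1$ equations $f_1,\ldots,f_c,g$ with ideal $\mathfrak{b}$. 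Nothing further needs to be proved.

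The gap in your proposal is the closing step. You propose to expand both sides into ``$G$-equivariant minima of quantities computed from jets of $\mathbb{A}^N_k$'' and then to match them ``arc by arc'' by the classical PIA of Ein--Musta\c{t}\u{a}. But the contact loci appearing in Theorem \ref{thm:mld_hyperquot} live in the arc spaces of the Denef--Loeser deformations $\overline{B}^{(\gamma)}$ and $\overline{D}^{(\gamma)}$, which are (generally non-normal, non-complete-intersection) schemes over $k[t]$; the results of \cite{EM04} concern lci varieties over $k$ and do not apply to them. The comparison you need is exactly the hard content of the proof of Theorem \ref{thm:PIA}: the inequality ``$\le$'' comes from the adjunction computation $(K_A+H_1+\cdots+H_c)|_B=K_B$ of Lemma \ref{lem:property B_i}(3), and the inequality ``$\ge$'' from the $k[t]$-adapted codimension estimate of Lemma \ref{lem:EM8.4} together with the klt hypothesis (to guarantee finiteness of the Jacobian order). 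So either you re-derive all of that for the pair $\overline{D}^{(\gamma)}\subset\overline{B}^{(\gamma)}$ --- which is substantial and is not covered by \cite{EM04} --- or you should instead cite the already-established Theorem \ref{thm:PIA} twice, at which point the theorem is immediate. Finally, the hypotheses ``no pseudo-reflection'' and $\operatorname{codim}_D(D\cap Z)\ge 2$ do not serve to kill a codimension-one discrepancy between two bookkeepings, as you suggest; they are needed so that $D=\overline{D}/G$ itself satisfies the hypotheses of Theorem \ref{thm:PIA2}.
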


In order to reduce Theorem \ref{thm:LSC_intro} to the case of quotient singularities, we prove the following form of the PIA conjecture.

\begin{thm}[{$=$ Theorem \ref{thm:PIA2}}]\label{thm:PIA2_intro}
Let $G \subset {\rm GL}_N(k)$ be a finite subgroup of order $d$  which does not contain a pseudo-reflection. 
Let $X := \mathbb{A}_k^N / G$ be the quotient variety. 
Let $Z \subset X$ be the minimum closed subset such that $\mathbb{A}_k^N \to X$ is \'{e}tale outside $Z$. 
Let $\overline{Y}$ be a subvariety of $\mathbb{A}_k^N$ of codimension $c$. 
Suppose that $\overline{Y} \subset \mathbb{A}_k^N$ is defined by $c$ $G$-semi-invariant equations 
$f_1, \ldots , f_c \in k[x_1, \ldots, x_N]$. 
Let $Y := \overline{Y}/ G$ be the quotient variety. 
We assume that $Y$ has only klt singularities and $\operatorname{codim} _Y (Y \cap Z) \ge 2$. 
Then, for any $\mathbb{R}$-ideal sheaf $\mathfrak{a}$ on $X$ 
with $\mathfrak{a} \mathcal{O}_Y \not = 0$, and any closed point $y \in Y$, we have 
\[
\operatorname{mld}_y \bigl( Y, \mathfrak{a}\mathcal{O}_Y \bigr) = 
\operatorname{mld}_y \bigl( X, (f_1 ^d  \cdots f_{c} ^d )^{\frac{1}{d}} \mathfrak{a} \bigr). 
\]
\end{thm}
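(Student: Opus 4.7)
The plan is to extend the arc-space proof of the invariant-equation analogue of \cite{NS22} to the semi-invariant setting by systematically working with $G$-equivariant arcs of $\mathbb{A}_k^N$. The central observation is that, although each $f_i$ is only semi-invariant, its $d$-th power $f_i^d$ is honestly $G$-invariant: the character $\chi_i:G\to k^{\times}$ defined by $\alpha(f_i)=\chi_i(\alpha)f_i$ takes values in the group of $d$-th roots of unity. Consequently $f_i^d$ descends to a function $\bar h_i\in\mathcal{O}_X$, and as an $\mathbb{R}$-ideal sheaf on $X$ the product $(f_1^d\cdots f_c^d)^{1/d}$ corresponds to the effective $\mathbb{Q}$-Weil divisor $D_1+\cdots+D_c$, where $D_i:=\pi(V(f_i))$ satisfies $\pi^{\ast}D_i=V(f_i)$ and $\pi:\mathbb{A}_k^N\to X$ is the quotient map.

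Following the formalism of \cite{NS22,NS2}, we would express both $\operatorname{mld}_y(Y,\mathfrak{a}\mathcal{O}_Y)$ and $\operatorname{mld}_y\bigl(X,(f_1^d\cdots f_c^d)^{1/d}\mathfrak{a}\bigr)$ as infima over cylinders of $G$-equivariant (twisted) arcs of $\mathbb{A}_k^N$ centered over $y$. The klt hypothesis on $Y$ together with $\operatorname{codim}_Y(Y\cap Z)\ge 2$ permits restriction of the infima to cylinders whose generic arc avoids the ramification locus, exactly as in the invariant case. The key bridge is the following order identity: for a $G$-equivariant arc $\gamma\in J_\infty\mathbb{A}_k^N$ with image $\bar\gamma\in J_\infty X$, semi-invariance of $f_i$ implies that $\operatorname{ord}_\gamma f_i$ is $G$-invariant and satisfies
\[
\operatorname{ord}_\gamma f_i \;=\; \tfrac{1}{d}\operatorname{ord}_\gamma f_i^d \;=\; \tfrac{1}{d}\operatorname{ord}_{\bar\gamma}\bar h_i \;=\; \operatorname{ord}_{\bar\gamma}\bigl((f_i^d)^{1/d}\bigr).
\]
Combining this with Mustata's complete-intersection formula $\operatorname{codim}(C\cap J_\infty\overline{Y})=\operatorname{codim}(C)+\sum_i\operatorname{ord}_C f_i$ applied to $G$-equivariant cylinders $C$ in $J_\infty\mathbb{A}_k^N$ converts the arc-space expression for the left-hand side of the theorem into that for the right-hand side.

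The main technical obstacle will be verifying that Mustata's complete-intersection formula and the twisted-arc bookkeeping of \cite{NS22,NS2} remain valid when the defining equations of $\overline{Y}$ are only semi-invariant rather than invariant. Concretely, we must check that the decomposition of $J_\infty X$ into $G$-sectors is compatible with the complete-intersection structure of $\overline{Y}$, so that the factor $1/d$ appearing in the $\mathbb{R}$-ideal on $X$ exactly balances the contribution of the semi-invariant characters $\chi_i$ in each sector, and that the codimension counts used in \cite{NS22} to reduce to cylinders avoiding $Y\cap Z$ are not disrupted by the non-triviality of these characters. Once these equivariant identifications are in place, the proof proceeds in parallel with the invariant case, with the role of the descended invariant functions $\tilde f_i$ of \cite{NS22} played by the formal $\mathbb{R}$-ideal $(f_i^d)^{1/d}$ on $X$.
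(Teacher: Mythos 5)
Your overall strategy---express both sides as infima of codimensions over Denef--Loeser twisted-arc sectors, and match them sector by sector using an order identity for $f_i^d$ and a complete-intersection codimension formula---is indeed the strategy of the paper (via Lemma \ref{lem:property B_i}, Lemma \ref{lem:zure}, Theorem \ref{thm:mld_hyperquot} and Lemma \ref{lem:EM8.4}). But what you label ``the main technical obstacle \ldots to be verified'' is precisely the new mathematical content of the theorem, and your proposal does not supply it. Concretely: (a) for semi-invariant $f_i$ the sector schemes of \cite{NS22} are not even defined, because $\lambda^*_{\gamma}(f_i)\notin R[t]$; one must replace the defining equations by $\lambda^*_{\gamma}(f_i)t^{-w_{\gamma}(f_i)}$, and all subsequent bookkeeping (Lemma \ref{lem:pull back}, the $C_{\gamma}$-invariance of the resulting ideal, the identification of the dominant component) has to be redone for these modified equations. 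Without this, ``applying Musta\c{t}\u{a}'s formula to $G$-equivariant cylinders'' has no precise referent. (b) The assertion that ``the factor $1/d$ exactly balances the contribution of the characters $\chi_i$ in each sector'' is exactly the identity that must be proved, namely the appearance of the correction term $\operatorname{age}(\gamma)-w_{\gamma}(\mathbf{f})$ on both sides of the comparison. On the quotient side this requires comparing $\Omega^n_{\overline{B}^{(\gamma)}/k[t]}$ with $\omega_B$; since for semi-invariant equations $B$ is no longer cut out by a regular sequence in $R^G$, the auxiliary sheaf $L_{\overline{B}^{(\gamma)}}$ of \cite{NS22} does not exist, and the paper has to construct a substitute $L_4$ after base change to $k[t^{1/d}]$ and prove the order identities of Lemmas \ref{lem:L3} and \ref{lem:L5}. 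Nothing in your proposal produces this term.

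Two further points would derail the argument as written. First, the complete-intersection formula you invoke, $\operatorname{codim}(C\cap J_\infty\overline{Y})=\operatorname{codim}(C)+\sum_i\operatorname{ord}_C f_i$, is false as an equality in the singular setting needed here: the correct statement (Lemma \ref{lem:EM8.4}) is an inequality carrying an extra term $e=\operatorname{ord}\bigl(\operatorname{Fitt}^n(\Omega_{\overline{B}^{(\gamma)}/k[t]})\bigr)$, and the finiteness of this $e$ on the relevant cylinders is exactly where the klt hypothesis on $Y$ enters (via \cite{NS22}*{Claim 5.2}); your sketch uses the klt hypothesis only to ``avoid the ramification locus,'' which is a different (and easier) point. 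Second, the two directions of the equality are not symmetric: the inequality $\operatorname{mld}_y(X,(f_1^d\cdots f_c^d)^{1/d}\mathfrak{a})\le\operatorname{mld}_y(Y,\mathfrak{a}\mathcal{O}_Y)$ follows from adjunction $(K_X+H_1+\cdots+H_c)|_Y=K_Y$ (Lemma \ref{lem:property B_i}(3), which itself needs the hypothesis $\operatorname{codim}_Y(Y\cap Z)\ge 2$, an assumption that is automatic in the invariant case but not here), whereas the reverse inequality is the one requiring the full arc-space comparison. As it stands, your proposal is a correct road map of the known invariant-equation proof together with an accurate list of the places where it breaks, but it does not repair any of them.
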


This paper is mainly devoted to proving Theorem \ref{thm:PIA2_intro}. 
The proof of Theorem \ref{thm:PIA2_intro} is given using the theory of arc spaces as in \cite{NS22}. 
However, since we are dealing with semi-invariant equations instead of invariant equations, some modifications are required in the proof. 
Below, we will explain the difficulties in dealing with semi-invariant equations and the necessary modifications. 

Let $N$ be a positive integer. Set $R := k[x_1, \ldots , x_N]$. 
Let $d$ be a positive integer, and let $\xi \in k$ be a primitive $d$-th root of unity. 
Suppose that a finite group $G \subset \operatorname{GL}_N(k)$ of order $d$ acts on $\overline{A} = \operatorname{Spec} R$. 
Let $\overline{B} \subset \overline{A}$ be a complete intersection variety defined by $G$-semi-invariant elements $f_1,\ldots,f_c \in (x_1, \ldots, x_N) \subset R$. 
We denote by $B := \overline{B}/G$ its quotient variety. 
Since $G$ is a finite group, each $\gamma$ can be diagonalized with respect to a new basis $x_1^{(\gamma)}, \ldots, x_N^{(\gamma)}$ consisting of $k$-linear combinations of the original basis $x_1, \ldots , x_N$.
Let $\operatorname{diag} \left( \xi ^{e_1}, \ldots , \xi ^{e_N} \right)$ be the diagonal matrix 
with $0 \le e_i \le d-1$. 
Then we define a $k$-algebra homomorphism ${\lambda}^* _{\gamma}:R \to R[t^{1/d}]$ as follows:
\[
{\lambda}^* _{\gamma}: R \to R[t^{1/d}]; 
\quad x_i^{(\gamma)} \mapsto t^{e_i/d}x_i^{(\gamma)}. 
\]

When $f_i$ is $G$-invariant, we have $\lambda ^* _{\gamma}(f_i) \in R[t]$. 
In \cite{NS22}, we deal with this case, and we define a $k[t]$-scheme
\[\tag{1}
\overline{B}^{(\gamma)} 
:= \operatorname{Spec} \left( R[t]/\left( \lambda ^* _{\gamma}(f_1), \ldots , \lambda ^* _{\gamma}(f_c) \right) \right ). 
\]
In \cite{NS22}, we study the minimal log discrepancies of $B$ via the arc space $\overline{B}^{(\gamma)} _{\infty}$ by applying Denef and Loeser's theory \cite{DL02}.
Here, $\overline{B}^{(\gamma)} _{\infty}$ is the arc space of $\overline{B}^{(\gamma)}$ as a $k[t]$-scheme (Subsection \ref{subsection:arc}), which is essentially the corresponding Greenberg scheme (see Remark \ref{rmk:GB} for details).

However, if $f_i$ is just assumed to be $G$-semi-invariant, $\lambda ^* _{\gamma}(f_i)$ is not necessarily an element of $R[t]$.
Thus, the first difficulty is how to redefine $\overline{B}^{(\gamma)}$. 
For each $i$ and $\gamma \in G$, we define $w_\gamma(f_i):=\frac{a}{d}$ when $\gamma (f_i) =\xi^a f_i$ with $0 \le a \le d-1$. 
Then, we have $\lambda ^* _{\gamma}(f_i)t^{-w_\gamma(f_i)} \in R[t]$. 
By using this invariant $w_{\gamma}$, we change the definition of $\overline{B}^{(\gamma)}$ as follows: 
\[
\overline{B}^{(\gamma)} 
:= \operatorname{Spec} \left(R[t]/ \bigl( \lambda ^* _{\gamma}(f_1)t^{-w_\gamma(f_1)}, \ldots , \lambda ^* _{\gamma}(f_c)t^{-w_\gamma(f_c)} \bigr)\right).
\]
This definition coincides with the original definition (1) when $f_i$ is $G$-invariant, 
since we have $w_\gamma(f_i) = 0$ in this case.

As in \cite{NS22}, we can describe the minimal log discrepancies of $B$ by the codimensions of certain contact loci in the arc space $\overline{B}^{(\gamma)}_{\infty}$ of $\overline{B}^{(\gamma)}$. 
This is a crucial key to proving Theorem \ref{thm:PIA2_intro}. 

\begin{thm}[{$=$ Theorem \ref{thm:mld_hyperquot}}]\label{thm:mld_hyperquot_intro}
Let $n := N - c$. 
Let $x \in B$ be the image of the origin of $\overline{A} = \mathbb{A}_k ^N$. 
Let $\mathfrak{a} \subset  \mathcal{O}_B$ be a non-zero ideal sheaf and $\tau$ a positive real number. 
Then 
\begin{align*}
\operatorname{mld}_x(B,\mathfrak{a}^{\tau}) 
&= \inf _{\gamma \in G, \ b_1, b_2 \in \mathbb{Z}_{\ge 0}}
\left \{ 
\operatorname{codim} \left( C_{\gamma, b_1, b_2} \right) + \operatorname{age}(\gamma)- w_\gamma({\bf f}) - b_2 - \tau b_1
\right \} \\
&= \inf _{\gamma \in G, \ b_1, b_2 \in \mathbb{Z}_{\ge 0}}
\left \{ 
\operatorname{codim} \left( C' _{\gamma, b_1, b_2} \right) + \operatorname{age}(\gamma) - w_\gamma({\bf f}) - b_2 - \tau b_1
\right \}
\end{align*}
holds for 
\begin{align*}
C_{\gamma, b_1, b_2} 
& := 
\operatorname{Cont}^{\ge 1} \left( \mathfrak{m}_x \mathcal{O}_{\overline{B}^{(\gamma)}} \right) \cap
\operatorname{Cont}^{b_1}  \left( \mathfrak{a} \mathcal{O}_{\overline{B}^{(\gamma)}} \right) \cap  
\operatorname{Cont}^{b_2} \left(\operatorname{Jac}_{\overline{B}^{(\gamma)}/k[t]} \right), \\
C'_{\gamma, b_1, b_2}
& := 
\operatorname{Cont}^{\ge 1} \left(\mathfrak{m}_x \mathcal{O}_{\overline{B}^{(\gamma)}} \right) \cap
\operatorname{Cont}^{\ge b_1}  \left(\mathfrak{a} \mathcal{O}_{\overline{B}^{(\gamma)}} \right) \cap 
\operatorname{Cont}^{b_2} \left(\operatorname{Jac}_{\overline{B}^{(\gamma)}/k[t]} \right),
\end{align*}
where $\operatorname{Jac}_{\overline{B}^{(\gamma)} /k[t]} := 
\operatorname{Fitt}^{n} \left( \Omega _{\overline{B}^{(\gamma)} /k[t]} \right)$ and $w_\gamma({\bf f}) := w_{\gamma}(f_1) + \cdots + w_{\gamma}(f_c)$.
\end{thm}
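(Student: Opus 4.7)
The plan is to adapt the arc-space strategy from \cite{NS22} to the semi-invariant setting, with the main novelty being that the change of variables $\lambda_\gamma^*$ no longer lands in $R[t]$ but requires the explicit twist by $t^{-w_\gamma(f_i)}$ on each defining equation. The starting point is the Denef--Loeser stratification of the arc space of the quotient $X = \overline{A}/G$: arcs on $X$ centered at $x$ decompose (up to conjugacy) according to the group element $\gamma \in G$ encoding the monodromy, and within each stratum the arcs correspond, after applying $\lambda_\gamma^*$, to honest $k[t]$-arcs on a family associated to $\gamma$. On such arcs, the Reid--Tai contribution to the relative log discrepancy of $\overline{A}\to X$ produces the $\operatorname{age}(\gamma)$ term.

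First I would express $\operatorname{mld}_x(B, \mathfrak{a}^\delta)$ in terms of codimensions of contact loci on the arc space of $\overline{A}$, restricted to the arcs $\beta$ with $f_i(\beta)=0$ for all $i$, by combining the quotient-singularity formula from \cite{Nak16} with the complete-intersection arc-space description of \cite{EM04}. In the $\gamma$-stratum, this restriction becomes the vanishing of $\lambda_\gamma^*(f_i)$ along the transformed arc. Using the identity $\gamma(f_i) = \xi^{dw_\gamma(f_i)} f_i$ one obtains $\lambda_\gamma^*(f_i) = t^{w_\gamma(f_i)} g_i$ with $g_i\in R[t]$; so the vanishing order of $\lambda_\gamma^*(f_i)$ along a $k[t^{1/d}]$-arc exceeds that of $g_i$ along the corresponding $k[t]$-arc by exactly $w_\gamma(f_i)$. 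Summing over $i$ and tracking this defect through the codimension computation produces the $-w_\gamma(\mathbf{f})$ correction in the formula.

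Next I would incorporate the Jacobian factor $\operatorname{Jac}_{\overline{B}^{(\gamma)}/k[t]}$, which measures the failure of $\overline{B}^{(\gamma)} \to \operatorname{Spec} k[t]$ to be smooth and enters through the Kontsevich--Denef--Loeser change of variables used to pass from contact loci on $\overline{A}$ to contact loci on $\overline{B}^{(\gamma)}$; this is where the $\operatorname{Cont}^{b_2}$ stratification and its weighting by $b_2$ appear. The condition $\operatorname{Cont}^{\ge 1}(\mathfrak{m}_x \mathcal{O}_{\overline{B}^{(\gamma)}})$ isolates the arcs mapping to the fiber over $x$, exactly mirroring the ``centered at $x$'' condition after the twist, and the ideal $\mathfrak{a} \mathcal{O}_{\overline{B}^{(\gamma)}}$ pulls back cleanly because $\mathfrak{a}$ comes from $B$. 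Taking the infimum over $(\gamma, b_1, b_2)$ and comparing with the mld formula gives the first equality; the second equality, with $\operatorname{Cont}^{\ge b_1}$ in place of $\operatorname{Cont}^{b_1}$, then follows from the standard monotonicity/infimum argument already used in \cite{NS22}.

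The main obstacle I expect is in step two: verifying, under the modified definition of $\overline{B}^{(\gamma)}$, that the bijection between the relevant stratum of $B_\infty$ and a subset of $\overline{B}^{(\gamma)}_\infty$ is genuinely induced by $\lambda_\gamma^*$ and that the shift by $w_\gamma(f_i)$ is the only new contribution to codimensions, i.e.\ that no extra correction from the fractional powers $t^{e_i/d}$ on the coordinates survives after restricting to $k[t]$-arcs. The hypothesis $\operatorname{codim}_Y(Y\cap Z)\ge 2$, together with klt-ness of $Y$, should let one ignore a negligible closed subset of arcs and conclude that the codimension comparison is exact, so that the bookkeeping on the right-hand side matches the mld on the left.
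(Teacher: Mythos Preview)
Your proposal misidentifies both the route and the origin of the $w_\gamma(\mathbf{f})$ term, and omits the technical core of the argument.

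First, the route. You propose to express $\operatorname{mld}_x(B,\mathfrak{a}^\delta)$ via contact loci on the arc space of $\overline{A}$ restricted to arcs with $f_i(\beta)=0$, ``combining \cite{Nak16} with \cite{EM04}''. But $B$ is neither a quotient singularity nor a local complete intersection, so neither formula applies directly; and any attempt to pass through the ambient $A$ with the divisor $\sum H_i$ is essentially the PIA statement (Theorem~\ref{thm:PIA}), whose proof \emph{uses} Theorem~\ref{thm:mld_hyperquot}. The paper instead starts intrinsically on $B$: by \cite{EM09}*{Theorem 7.4} one has $\operatorname{mld}_x(B,\mathfrak{a}^\delta)$ expressed through contact loci of $\mathfrak{a}$ and of the Nash ideal $\mathfrak{n}_{r,B}$ in $B_\infty$, and then one lifts these cylinders to $\overline{B}^{(\gamma)}_\infty$ via the Denef--Loeser map $\overline{\mu}_{\gamma\infty}$ (Proposition~\ref{prop:DL_X}).

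Second, the source of $w_\gamma(\mathbf{f})$. Your heuristic that $\lambda_\gamma^*(f_i)=t^{w_\gamma(f_i)}g_i$ creates a vanishing-order shift is relevant to Lemma~\ref{lem:zure} in the PIA section, but it is \emph{not} how $w_\gamma(\mathbf{f})$ enters Theorem~\ref{thm:mld_hyperquot}: on $\overline{B}^{(\gamma)}_\infty$ the $g_i$ vanish identically, so there is no order to shift. The correction arises instead from the codimension change under $\overline{\mu}_{\gamma\infty}$, which by Propositions~\ref{prop:EM6.2_k[t]} and \ref{prop:DL2_k[t]} is governed by $\operatorname{ord}_\alpha(\operatorname{jac}_{\overline{\mu}_\gamma})$. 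The heart of the proof is the identity (Lemma~\ref{lem:L3})
\[
\operatorname{ord}_\alpha(\operatorname{jac}_{\overline{\mu}_\gamma}) + \operatorname{ord}_\alpha(\operatorname{Jac}_{\overline{B}^{(\gamma)}/k[t]})
= \tfrac{1}{r}\operatorname{ord}_\alpha(\mathfrak{n}_{r,B}\mathcal{O}_{\overline{B}^{(\gamma)}}) + \operatorname{age}(\gamma) - w_\gamma(\mathbf{f}),
\]
and this is proved by constructing the sheaf $L_4$ on the $k[t^{1/d}]$-scheme $B_4=\overline{B}^{(\gamma)}\times_{k[t]}k[t^{1/d}]$, with its built-in factor $t^{w_\gamma(\mathbf{f})}$, and comparing the images of $\Omega^n_{B_4/k[t^{1/d}]}$ and of $\omega^{[r]}_{B_1/k[t^{1/d}]}$ inside powers of $L_4$ (Lemmas~\ref{lem:L} and \ref{lem:L2}). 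This sheaf-theoretic comparison, together with the companion computation for $\operatorname{jac}_p$ (Lemmas~\ref{lem:L4}, \ref{lem:L5}) needed to apply Proposition~\ref{prop:DL2_k[t]}, is the substance of the proof and is absent from your outline. The stratification by the seven-index cylinders $D_{\gamma,\mathbf{b}}$ (Lemma~\ref{lem:D}) and the thinness arguments (Lemma~\ref{lem:thin}) then package the comparison into the stated formula.
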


This result is a generalization of Theorem 4.8 and Corollary 4.9 in \cite{NS22}. 
Compared to the theorems in \cite{NS22}, the new term $w_\gamma({\bf f})$ appears in Theorem \ref{thm:mld_hyperquot_intro}.

By the theorem of Ein, Musta{\c{t}}{\u{a}}, and Yasuda (\cite{EMY03}), 
the minimal log discrepancy $\operatorname{mld}_x(B,\mathfrak{a}^{\tau})$ is described by the codimensions of the corresponding contact loci in the arc space $B_{\infty}$ of $B$. 
Therefore, in order to prove Theorem \ref{thm:mld_hyperquot_intro}, 
we need to compare the codimension of a cylinder $C \subset B_{\infty}$ and the codimension of 
the corresponding cylinder $C' \subset \overline{B}^{(\gamma)}_{\infty}$. 
By the same argument in \cite{NS22} (using the theory of Denef and Loeser in \cite{DL02}), the difference can be measured by comparing $\Omega _{\overline{B}^{(\gamma)} / k[t]} ^n$ and $\omega _{B}$ via the morphism $\overline{B}^{(\gamma)} \to B$. 

In \cite{NS22}, in order to study the sheaf $\Omega _{\overline{B}^{(\gamma)} / k[t]}$ of differentials, 
we define an invertible sheaf $L_{\overline{B}^{(\gamma)}}$ on $\overline{B}^{(\gamma)}$ as 
\[
L_{\overline{B}^{(\gamma)}} 
:= 
\bigl( \operatorname{det}^{-1} (I_B/I_B^2) \bigr) \big|_{\overline{B}^{(\gamma)}}
\otimes _{\mathcal{O}_{\overline{B}^{(\gamma)}}} 
\Bigl( \Omega _{\overline{A}^{(\gamma)} /k[t]} ^N \Bigr) \big|_{\overline{B}^{(\gamma)}}, 
\]
where $I_B := \left( f_1, \ldots , f_c \right) \subset R^G$ and 
$\overline{A}^{(\gamma)} := \operatorname{Spec} R[t]$. 
In \cite{NS22}, this $L_{\overline{B}^{(\gamma)}}$ plays the similar role as the relative canonical sheaf $\omega _{\overline{B}^{(\gamma)} / k[t]}$, and therefore, $\Omega _{\overline{B}^{(\gamma)} / k[t]}$ can be compared with $\omega _{B}$ via the new sheaf $L_{\overline{B}^{(\gamma)}}$. 
Note that we have no standard definition of $\omega _{\overline{B}^{(\gamma)} / k[t]}$ since $\overline{B}^{(\gamma)}$ is neither normal nor a complete intersection in general (see \cite{NS22}*{Remark 4.4}). 
This elaborate definition of $L_{\overline{B}^{(\gamma)}}$ was also an important argument in \cite{NS22}.

When dealing with $G$-semi-invariant equations $f_1, \ldots, f_c$, the same definition of $L_{\overline{B}^{(\gamma)}}$ above does not work because $B$ is not defined by a regular sequence of $R^G$. Here, another difficulty arises as to how $L_{\overline{B}^{(\gamma)}}$ should be redefined.
We then make the following modifications. 
First, note that the ring homomorphism $\lambda ^*_{\gamma}$ induces a ring homomorphism
\[
R/(f_1, \ldots, f_c) \longrightarrow 
R[t^{1/d}]/\bigl( \lambda ^* _{\gamma}(f_1)t^{-w_\gamma(f_1)}, \ldots , \lambda ^* _{\gamma}(f_c)t^{-w_\gamma(f_c)} \bigr). 
\]
Therefore, we have a corresponding morphism
\[
\overline{B}^{(\gamma)}_{1/d} := \overline{B}^{(\gamma)}\times_{\operatorname{Spec} k[t]} \operatorname{Spec} k[t^{1/d}] 
\longrightarrow \overline{B}. 
\]
Then, we define a (not necessarily an invertible) sheaf $L_{\overline{B}^{(\gamma)}_{1/d}}$ on $\overline{B}^{(\gamma)}_{1/d}$ by 
\[
L_{\overline{B}^{(\gamma)}_{1/d}} := 
t^{w_{\gamma}({\bf f})}
\bigl( \operatorname{det}^{-1} (I_{\overline{B}}/I_{\overline{B}}^2) \bigr) \big|_{\overline{B}^{(\gamma)}_{1/d}}
\otimes _{\overline{B}^{(\gamma)}_{1/d}} 
\left( \Omega _{\overline{A}^{(\gamma)}_{1/d} /k[t]} ^N \right) \big|_{\overline{B}^{(\gamma)}_{1/d}}, 
\]
where $I_{\overline{B}} := \left( f_1, \ldots , f_c \right) \subset R$, 
and $\overline{A}^{(\gamma)}_{1/d} := \operatorname{Spec} R[t^{1/d}]$. 
This $L_{\overline{B}^{(\gamma)}_{1/d}}$ will play the similar role as the $L_{\overline{B}^{(\gamma)}}$ in \cite{NS22}, and it allows us to modify the argument. 
A new point of this paper is that we extend the $k[t]$-schemes appearing in \cite{NS22} to $k[t^{1/d}]$-schemes in this way, and we compare the sheaves of differentials on them. 

The paper is organized as follows. 
In Section \ref{section:pre}, following \cite{NS22}, we review some definitions and facts on pairs and arc spaces. 
In Section \ref{section:setting}, we review the theory of arc spaces of quotient varieties established by Denef and Loeser in \cite{DL02}, and we apply it to our setting. 
In Section \ref{section:order}, we study the orders of the Jacobian for the morphisms $\overline{\mu} _{\gamma}$ and $p$, and we prove some relations using some ideals on $\overline{B}^{(\gamma)}$. 
These relations are the key ingredient of the proof of Theorem \ref{thm:mld_hyperquot}.  
In Section \ref{section:mld}, we prove Theorem \ref{thm:mld_hyperquot}.
In Sections \ref{section:PIA} and \ref{section:LSC}, we prove Theorems \ref{thm:PIA_intro},  \ref{thm:PIA2_intro} and \ref{thm:LSC_intro}.

The klt assumption on $Y$ and $D$ in Theorem \ref{thm:PIA_intro} is used to prove $b_3 < \infty$ in the proof of Theorem \ref{thm:PIA}. 
We cite \cite{NS22}*{Claim 5.2} for this step, which is essentially based on the following two results: the result by Hacon and McKernan \cite{HM07}, which establishes the rational chain connectedness of fibers of resolutions of klt singularities; and the result by Graber, Harris, and Starr \cite{GHS03}, which asserts the existence of a section for a morphism with rational chain connected fibers. 
We emphasize that this assumption cannot be dropped, as \cite{NS4} constructs a counterexample in the non-klt setting. 

\begin{ackn} 
The first author is partially supported by JSPS KAKENHI No.\
18K13384, 22K13888, and JPJSBP120219935. The second author is
partially supported by JSPS KAKENHI No.\ 19K14496 and 23K12958.
\end{ackn}

\section*{Notation}

\begin{itemize}
\item 
We basically follow the notations and the terminologies in \cite{Har77} and \cite{Kol13}.

\item 
Throughout this paper, $k$ is an algebraically closed field  of characteristic zero. 
We say that $X$ is a \textit{variety over} $k$ or a \textit{$k$-variety} if 
$X$ is an integral scheme that is separated and of finite type over $k$. 
\end{itemize}

\section{Preliminaries}\label{section:pre}
Following \cite{NS22}, we review some definitions and facts on pairs and arc spaces.
\subsection{Log pairs}
A \textit{log pair} $(X, \mathfrak{a})$ is a normal $\mathbb{Q}$-Gorenstein $k$-variety $X$ and 
an $\mathbb{R}$-ideal sheaf $\mathfrak{a}$ on $X$. 
Here, an $\mathbb{R}$-\textit{ideal sheaf} $\mathfrak{a}$ on $X$ is a formal product 
$\mathfrak{a} = \prod _{i = 1} ^s \mathfrak{a}_i ^{\tau_i}$, where $\mathfrak{a}_1, \ldots, \mathfrak{a}_s$ are 
non-zero coherent ideal sheaves on $X$ 
and $\tau _1, \ldots , \tau _s$ are positive real numbers. 
For a morphism $Y \to X$ and an $\mathbb{R}$-ideal sheaf $\mathfrak{a} = \prod _{i = 1} ^s \mathfrak{a}_i ^{\tau _i}$ on $X$, 
we denote by $\mathfrak{a} \mathcal{O}_Y$ the $\mathbb{R}$-ideal sheaf $\prod _{i = 1} ^s (\mathfrak{a}_i \mathcal{O}_Y)  ^{\tau _i}$ on $Y$. 

Let $\bigl( X, \mathfrak{a} = \prod _{i = 1} ^s \mathfrak{a}_i ^{\tau _i} \bigr)$ be a log pair. 
Let $f: X' \to X$ be a proper birational morphism from a normal variety $X'$ and let $E$ be a prime divisor on $X'$. 
We denote by $K_{X'/X} := K_{X'} - f^* K_X$ the relative canonical divisor. 
Then the \textit{log discrepancy} of $(X, \mathfrak{a})$ at $E$ is defined as 
\[
a_E(X, \mathfrak{a}) := 1 + \operatorname{ord}_E (K_{X'/X}) - \operatorname{ord}_E \mathfrak{a}, 
\]
where we define $\operatorname{ord}_E \mathfrak{a} := \sum _{i=1} ^s \tau _i \operatorname{ord}_E \mathfrak{a}_i$. 
The image $f(E)$ is called the \textit{center} of $E$ on $X$ and we denote it by $c_X(E)$. 
For a closed point $x \in X$, we define \textit{the minimal log discrepancy} at $x$ as 
\[
\operatorname{mld}_x (X, \mathfrak{a}) := \inf _{c_X(E) = \{ x \}} a_E (X, \mathfrak{a})
\]
if $\dim X \ge 2$, where the infimum is taken over all prime divisors $E$ over $X$ with center $c_X(E) = \{ x \}$. 
It is known that $\operatorname{mld}_x (X, \mathfrak{a}) \in \mathbb{R}_{\ge 0} \cup \{ - \infty \}$ in this case (cf.\ \cite{KM98}*{Corollary 2.31}). 
When $\dim X = 1$, we define $\operatorname{mld}_x (X, \mathfrak{a}) := \inf _{c_X(E) = \{ x \}} a_E (X, \mathfrak{a})$ 
if the infimum is non-negative, and we define $\operatorname{mld}_x (X, \mathfrak{a}) := - \infty$ otherwise. 

\subsection{Log triples}
A \textit{log triple} $(X, D, \mathfrak{a})$ is a normal $\mathbb{Q}$-Gorenstein $k$-variety $X$, a $\mathbb{Q}$-Cartier $\mathbb{Q}$-divisor $D$ on $X$, and 
an $\mathbb{R}$-ideal sheaf $\mathfrak{a}$ on $X$. 

Let $\bigl( X, D, \mathfrak{a} = \prod _{i = 1} ^s \mathfrak{a}_i ^{\tau _i} \bigr)$ be a log triple. 
Let $f: X' \to X$ be a proper birational morphism from a normal variety $X'$ and let $E$ be a prime divisor on $X'$. 
Then the \textit{log discrepancy} of $(X, D, \mathfrak{a})$ at $E$ is defined as 
\[
a_E(X, D, \mathfrak{a}) := 1 + \operatorname{ord}_E (K_{X'}-f^*(K_X+D)) - \operatorname{ord}_E \mathfrak{a}.
\]
Then $\operatorname{mld}_x (X, D, \mathfrak{a})$ is defined in the same way as for log pairs.

\subsection{Jacobian ideals and Nash ideals}
In this subsection, we recall the definition of the Jacobian ideals and the Nash ideals. 
\begin{defi}\label{defi:nash}
\begin{enumerate}
\item  
For a scheme $X$ over $k$ of dimension $n$, we denote by 
$\operatorname{Jac}_X := \operatorname{Fitt} ^n (\Omega _X)$ the \textit{Jacobian ideal} of $X$ (see \cite{Eis95} for the definition of the Fitting ideal). 

\item 
Let $X$ be a normal $\mathbb{Q}$-Gorenstein variety over $k$ of dimension $n$, and 
let $r$ be a positive integer such that the reflexive power $\omega _X ^{[r]} := \left( \omega _X ^{\otimes r} \right)^{**}$ is an invertible sheaf. 
Then we have a canonical map
\[
\eta_r \colon \left( \Omega_X^n \right )^{\otimes r} \to \omega _X ^{[r]}.
\] 
Since $\omega _X ^{[r]}$ is an invertible sheaf, 
an ideal sheaf $\mathfrak{n}_{r,X} \subset \mathcal{O}_X$ is uniquely determined by 
${\rm Im}(\eta_r) = \mathfrak{n}_{r,X} \otimes \omega _X ^{[r]}$.
The ideal sheaf $\mathfrak n_{r,X}$ is called the $r$-th \textit{Nash ideal} of $X$.

\item 
Let $X$ be a normal $k[t]$-variety of relative dimension $n$. Suppose that $X$ is smooth over $k[t]$ outside a closed subset of $X$ of codimension two. 
Then the canonical sheaf $\omega _{X/k[t]}$ is defined (cf.\ \cite{Kol13}*{Definition 1.6}). 
\end{enumerate}
\end{defi}

\begin{rmk}\label{rmk:nash}
In this paper, we use the notation $\omega _{X/k[t]}$ only for $k[t]$-schemes $X$ of the form 
$X = X' \times _{\operatorname{Spec} k} \operatorname{Spec} k[t]$ with some normal $k$-varieties $X'$. 
In this case, we simply have $\omega _{X/k[t]} \simeq \omega _{X'} \otimes _{\mathcal{O}_{X'}} \mathcal{O}_X$. 
\end{rmk}

\subsection{Arc spaces of $k$-schemes}\label{subsection:arc}
In this subsection, we briefly review the definition and some properties of jet schemes and arc spaces. The reader is referred to \cite{EM09} for details.

Let $X$ be a scheme of finite type over $k$. 
Let $({\sf Sch}/k)$ be the category of $k$-schemes and $({\sf Sets})$ the category of sets.
Define a contravariant functor  $F_{m}: ({\sf Sch}/k) \to ({\sf Sets})$ by 
\[
 F_{m}(Y) = \operatorname{Hom} _{k}\left( Y\times_{\operatorname{Spec} k} \operatorname{Spec} k[t]/(t^{m+1}), X \right).
\]
Then, the functor $F_{m}$ is representable by a scheme $X_m$ of finite type over $k$, and 
the scheme $X_m$ is called the $m$-th \textit{jet scheme} of $X$.
For $m \ge n \ge 0$, the canonical surjective homomorphism $k[t]/(t^{m+1}) \to k[t]/(t^{n+1})$ induces a morphism $\pi_{mn}:X_m \to X_n$.
There exists the projective limit and projections
\[
X_\infty := \mathop{\varprojlim}\limits_{m} X_m, \qquad \psi_{m}:X_{\infty} \to X_m, 
\]
and $X_{\infty}$ is called the {\it arc space} of $X$. 
Then there is a bijective map
\[
\operatorname{Hom} _{k}(\operatorname{Spec} K, X_{\infty}) \simeq \operatorname{Hom} _{k}(\operatorname{Spec} K[[t]], X)
\]
for any field $K$ with $k\subset K$. 

For $m\in\mathbb Z_{\ge 0}\cup\{\infty\}$ and a morphism $f:Y\to X$ of  schemes  of  finite type over $k$,
we denote by $f_m : Y_m \to X_m$ the morphism induced by $f$.

A subset $C \subset X_{\infty}$ is called a \textit{cylinder} if $C = \psi_{m} ^{-1}(S)$ holds for some $m \ge 0$ and 
a constructible subset $S \subset X_m$. Typical examples of cylinders appearing in this paper are the \textit{contact loci} 
$\operatorname{Cont}^{m}(\mathfrak{a})$ and $\operatorname{Cont}^{\geq m}(\mathfrak{a})$ defined as follows. 

\begin{defi}\label{defi:ord_cont}
\begin{enumerate}
\item
For an arc $\alpha \in X_{\infty}$ represented by a morphism $\alpha: \operatorname{Spec} K[[t]] \to X$ (where $K$ is a field extension of $k$) and an ideal sheaf $\mathfrak{a} \subset \mathcal{O}_X$, 
the \textit{order} of $\mathfrak{a}$ measured by $\alpha$ is defined as follows:
\[
\operatorname{ord}_{\alpha} (\mathfrak{a}) = \sup \{ r \in \mathbb{Z}_{\geq 0} \mid \alpha^*(\mathfrak{a}) \subset (t^r) \} \in \mathbb{Z}_{\ge 0} \cup \{ \infty \}, 
\]
where $\alpha^*(\mathfrak{a})$ denotes the ideal of $K[[t]]$ generated by the image of $\mathfrak{a}$.

\item For $m \in \mathbb{Z}_{\ge 0}$, we define $\operatorname{Cont}^{m}(\mathfrak{a}), \operatorname{Cont}^{\geq m}(\mathfrak{a}) \subset X_{\infty}$ as follows:
\begin{align*}
\operatorname{Cont}^{m}(\mathfrak{a}) &= \{ \alpha \in X_\infty \mid \operatorname{ord}_{\alpha}(\mathfrak a)= m\}, \\
\operatorname{Cont}^{\geq m}(\mathfrak{a}) &= \{ \alpha \in X_\infty \mid \operatorname{ord}_{\alpha}(\mathfrak a)\geq m\}.
\end{align*}
\end{enumerate}
\end{defi}

For cylinders, we can define their codimensions. 
\begin{defi}\label{defi:codim_k}
Let $X$ be a variety over $k$ and let $C \subset X_{\infty}$ be a cylinder. 
\begin{enumerate}
\item 
Assume that $C \subset \operatorname{Cont}^e(\operatorname{Jac}_X)$ for some $e \in \mathbb{Z}_{\ge 0}$.
Then we define the codimension of $C$ in $X_\infty$ as 
\[
\operatorname{codim}(C):=(m+1)\operatorname{dim}X-\operatorname{dim} \left( \psi_m(C) \right)
\]
for any sufficiently large $m$. This definition is well-defined by \cite{EM09}*{Proposition 4.1}.

\item 
In general, we define the codimension of $C$ in $X_\infty$ as follows:
\[
\operatorname{codim}(C):=\min_{e\in\mathbb Z_{\ge 0}} \operatorname{codim} \left( C\cap \operatorname{Cont}^e(\operatorname{Jac}_X) \right). 
\]
By convention, $\operatorname{codim}(C) = \infty$ if $C \cap \operatorname{Cont}^e \left( \operatorname{Jac} _{X} \right) = \emptyset$
for every $e \ge 0$. 
\end{enumerate}
\end{defi}

\begin{rmk}
In what follows, we only consider $k$-arcs (i.e., we assume $K=k$). 
The theory of arc spaces employed in this paper primarily focuses on cylinders and their codimensions. Since the codimension of a cylinder in $X_\infty$ is defined at the level of finite jet schemes and $k$ is an algebraically closed field, 
restricting our attention to $k$-arcs is sufficient for our purposes.
\end{rmk}

\subsection{Arc spaces of $k[t]$-schemes}\label{subsection:arc2}
In this subsection, we deal with the arc spaces of $k[t]$-schemes. 
The reader is referred to \cite{DL02} and \cite{NS22} for details.

Let $X$ be a scheme of finite type  over $k[t]$. 
For a non-negative integer $m$, we define a contravariant functor $F^X_{m}: ({\sf Sch}/k) \to ({\sf Sets})$ by 
\[
F^X _{m}(Y) = \operatorname{Hom} _{k[t]} \left( Y \times_{\operatorname{Spec} k} \operatorname{Spec} k[t]/(t^{m+1}), X \right).
\]
The functor $F^X_{m}$ is always represented by a scheme $X_m$ over $k$. 
We use the same notation $X_{\infty}$, $\psi_m$, and $\pi_{mn}$ in this setting as well: 
\[
X_\infty := \mathop{\varprojlim}\limits_{m} X_m, \qquad \psi_{m}:X_{\infty} \to X_m, \qquad \pi_{mn}:X_m \to X_n. 
\]
We also define a subset $C \subset X_{\infty}$ to be a \textit{cylinder} if it is of the form $C = \psi_{m}^{-1}(S)$ for some $m \ge 0$ and a constructible subset $S \subset X_m$. 
The \textit{contact loci} $\operatorname{Cont}^{m}(\mathfrak{a})$ and $\operatorname{Cont}^{\ge m}(\mathfrak{a})$ are defined in the same way as in Definition \ref{defi:ord_cont}.

\begin{rmk}\label{rmk:bc}
Let $X$ be a $k$-scheme of finite type, and let $X' = X \times _{k} \operatorname{Spec} k[t]$ be its base change. 
Then, we have $X_m \simeq X'_m$ for $m\in \mathbb Z_{\ge 0} \cup \{ \infty \}$. 
Here, $X_m$ denotes the jet and arc schemes as $k$-schemes defined in Subsection \ref{subsection:arc}, and $X'_m$ denotes the jet and arc schemes as $k[t]$-schemes defined in this subsection. Therefore, the theory of arc spaces of $k[t]$-schemes can be seen as a generalization of that for $k$-schemes. 
\end{rmk}

\begin{rmk}\label{rmk:GB}
The schemes $X_m$ and $X_{\infty}$ coincide with the Greenberg schemes $\operatorname{Gr}_m(\mathfrak{X})$ and $\operatorname{Gr}_{\infty}(\mathfrak{X})$ (defined in \cite{Seb04} and \cite{CLNS}) for the formal scheme $\mathfrak{X}$ over $k[[t]]$ associated to the $k[t]$-scheme $X$. See \cite{NS22}*{Remark 2.14} for details.
\end{rmk}

\begin{rmk}
In \cite{NS22}, the authors introduce the condition $(\star)_n$ below, and they study the arc spaces of such $k[t]$-schemes $X$.   
\begin{quote}
$(\star)_n$: \ $X$ is a scheme of finite type over $\operatorname{Spec} k[t]$. 
Any irreducible component of $X$ has dimension at least $n+1$. 
Furthermore, any irreducible component dominating $\operatorname{Spec} k[t]$ is exactly $(n+1)$-dimensional.
\end{quote}
On the other hand, in \cite{NS2}, 
under the motivation of dealing with schemes of finite type over a formal power series ring, 
they relax this condition and treat $X$ with the following conditions:
\begin{quote}
$X$ is a scheme of finite type over $\operatorname{Spec} k[t]$ whose
irreducible components $X_i$ of $X$ have $\dim  X_i \ge n+1$.
\end{quote}
In this subsection, we follow the condition in \cite{NS2} and state the properties for such $k[t]$-schemes, although from Section \ref{section:setting}, we will deal only with schemes satisfying the condition $(\star)_n$.
\end{rmk}

\begin{defi}\label{defi:codim}
Let $n$ be a non-negative integer and 
let $X$ be a scheme of finite type over $k[t]$. 
Suppose that each irreducible component $X_i$ of $X$ has $\dim  X_i \ge n+1$. 
Let $C \subset X_{\infty}$ be a cylinder. 
\begin{enumerate}
\item 
Assume that $C \subset \operatorname{Cont}^e \left( \operatorname{Fitt}^n \left( \Omega _{X/k[t]} \right)\right)$ 
for some $e \in \mathbb{Z}_{\ge 0}$.
Then we define the codimension of $C$ in $X_\infty$ as
\[
\operatorname{codim}(C) := (m+1) n - \operatorname{dim}(\psi_m (C))
\]
for any sufficiently large $m$. This definition is well-defined by \cite{NS2}*{Proposition 5.9(2)} (cf.\ \cite{NS22}*{Lemma 2.15(2)}).

\item 
In general, we define the codimension of $C$ in $X_\infty$ as follows:
\[
\operatorname{codim}(C) := \min_{e \in \mathbb{Z}_{\ge 0}} {\operatorname{codim} \bigl(C \cap 
	\operatorname{Cont}^e \bigl( \operatorname{Fitt}^n \left( \Omega _{X/k[t]} \right) \bigr) \bigr)}. 
\]
By convention, $\operatorname{codim}(C) = \infty$ if 
$C \cap \operatorname{Cont}^e \bigl( \operatorname{Fitt}^n \left( \Omega _{X/k[t]} \right) \bigr) = \emptyset$
for every $e \ge 0$. 
\end{enumerate}
\end{defi}

\begin{rmk}[cf.\ \cite{NS2}*{Remark 5.12}]\label{rmk:codim}
The codimension defined in Definition \ref{defi:codim} depends on the choice of $n$. 
In the remainder of this subsection, we will fix $n$ and use the codimension with respect to the $n$.
\end{rmk}

\begin{defi}\label{defi:thin}
Let $n$ be a non-negative integer, and 
let $X$ be a scheme of finite type over $k[t]$. 
Suppose that each irreducible component $X_i$ of $X$ has $\dim X_i \ge n+1$. 
A subset $A \subset X_{\infty}$ is called \textit{thin} 
if $A \subset Z_{\infty}$ holds for some closed subscheme $Z$ of $X$ with $\dim Z \le n$. 
\end{defi}

\begin{prop}[{cf.\ \cite{Seb04}*{Th\'{e}or\`{e}me 6.3.5}}]\label{prop:negligible}
Let $n$ be a non-negative integer, and 
let $X$ be a scheme of finite type over $k[t]$. 
Suppose that each irreducible component $X_i$ of $X$ has $\dim X_i \ge n+1$. 
Let $C$ be a cylinder in $X_{\infty}$. 
Let $\{ C_{\lambda} \} _{\lambda \in \Lambda}$ be a set of countably many disjoint subcylinders $C_{\lambda} \subset C$. 
If $C \setminus (\bigsqcup _{\lambda \in \Lambda} C_{\lambda}) \subset X_{\infty}$ is a thin set, 
then it follows that 
\[
\operatorname{codim}(C) = \min _{\lambda \in \Lambda} \operatorname{codim}(C_{\lambda}). 
\]
\end{prop}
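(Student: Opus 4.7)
The plan is to prove the two opposite inequalities. The bound $\operatorname{codim}(C) \le \min_\lambda \operatorname{codim}(C_\lambda)$ is immediate from monotonicity of codimension under inclusion applied to each $C_\lambda \subset C$.

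For the reverse inequality, set $c := \min_\lambda \operatorname{codim}(C_\lambda)$ and $T := C \setminus \bigsqcup_\lambda C_\lambda$, which is thin by hypothesis. By Definition \ref{defi:codim}(2) it suffices to show, for every $e \in \mathbb{Z}_{\ge 0}$, that $\operatorname{codim}(C_e) \ge c$, where
\[
C_e := C \cap \operatorname{Cont}^e \bigl( \operatorname{Fitt}^n (\Omega_{X/k[t]}) \bigr).
\]
Decompose disjointly $C_e = \bigsqcup_\lambda C_{\lambda,e} \sqcup T_e$, with $C_{\lambda,e}$ and $T_e$ defined analogously and $T_e \subset T$ still thin. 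By Definition \ref{defi:codim}(1), for all sufficiently large $m$ one has $\operatorname{codim}(C_e) = (m+1)n - \dim \psi_m(C_e)$, and set-theoretically $\psi_m(C_e) = \bigcup_\lambda \psi_m(C_{\lambda,e}) \cup \psi_m(T_e)$.

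The crucial input is the dimension bound $\dim \psi_m(T_e) \le (m+1)n - c$ for $m \gg 0$. Since $T_e \subset Z_\infty$ for some closed subscheme $Z \subset X$ with $\dim Z \le n$, this follows from the Greenberg-type estimate for arcs of strictly lower-dimensional subschemes in the relative $k[t]$-scheme setting of \cite{NS2}*{Proposition 5.9} (analogous to the $k$-scheme version of \cite{DL02}). For each $\lambda$, Definition \ref{defi:codim}(1) gives $\dim \psi_m(C_{\lambda,e}) \le (m+1)n - \operatorname{codim}(C_{\lambda,e}) \le (m+1)n - c$. Since $X_m$ is Noetherian, every irreducible component of $\overline{\psi_m(C_e)}$ must be contained in $\overline{\psi_m(C_{\lambda,e})}$ for some $\lambda$ or in $\overline{\psi_m(T_e)}$; this component-wise argument, which is the content of \cite{Seb04}*{Th\'{e}or\`{e}me 6.3.5} adapted to our setting, gives $\dim \psi_m(C_e) \le (m+1)n - c$, so $\operatorname{codim}(C_e) \ge c$.

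The main obstacle is the first input: the Greenberg-type dimension bound on $\psi_m(T_e)$ for thin subsets in the relative $k[t]$-scheme setting. The hypothesis that every irreducible component $X_i$ satisfies $\dim X_i \ge n+1$ enters exactly here, guaranteeing that arcs factoring through a strictly lower-dimensional $Z$ produce a strict asymptotic dimension drop for $\psi_m$. Once this estimate is in hand from the preceding papers in the series, the component-wise reduction in the Noetherian scheme $X_m$ is routine and accounts for the countability of $\Lambda$.
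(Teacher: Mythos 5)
The paper does not prove this proposition at all: it is stated as a known result, with a ``cf.'' pointer to Sebag's Th\'{e}or\`{e}me 6.3.5 (and its analogues in the earlier papers of the series, e.g.\ \cite{NS22}*{Lemma 2.21} and \cite{NS2}). So there is no in-paper proof to compare against; your outline is the standard reduction, and the first inequality, the reduction to a fixed contact order $e$, and the thin-set dimension estimate are all fine in substance.

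The genuine gap is in the step you call ``routine.'' From the set-theoretic identity $\psi_m(C_e)=\bigcup_\lambda \psi_m(C_{\lambda,e})\cup\psi_m(T_e)$ with each piece of dimension at most $(m+1)n-c$, you conclude $\dim\psi_m(C_e)\le (m+1)n-c$ by asserting that every irreducible component of $\overline{\psi_m(C_e)}$ lies in the closure of a single piece. For a \emph{countable} union of constructible sets this is false in general: over a countable algebraically closed field such as $\overline{\mathbb{Q}}$ (which the paper's hypotheses allow), an irreducible variety can be covered by countably many proper closed subsets, so no component need be swallowed by one piece. This is exactly where all the content of the proposition sits, and your parenthetical that this step ``is the content of Sebag's th\'{e}or\`{e}me'' amounts to citing the result you are proving. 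The correct mechanism is the quasi-compactness of stable cylinders: since $C_e\setminus Z_\infty$ can be exhausted by cylinders of the form $C_e\cap\operatorname{Cont}^{\le M}(I_Z)$, each of which is covered by the $C_{\lambda,e}$ alone, one uses that a decreasing chain of nonempty cylinders contained in $\operatorname{Cont}^e(\operatorname{Fitt}^n(\Omega_{X/k[t]}))$ has nonempty intersection (an inverse-limit/Chevalley argument, valid over any field) to extract a \emph{finite} subfamily covering each such cylinder; only then does the dimension count close. A second, smaller point: the bound $\dim\psi_m(C_{\lambda,e})\le (m+1)n-c$ must hold for an $m$ chosen uniformly in the countably many $\lambda$; this is true because stability over $\operatorname{Cont}^e$ kicks in at a level depending only on $e$, but it deserves to be said rather than read off from Definition \ref{defi:codim}(1), whose ``sufficiently large $m$'' a priori depends on the cylinder.
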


We define the order of Jacobian for a morphism. 

\begin{defi}\label{defi:jac_k[[t]]}
\begin{enumerate}
\item
Let $X$ and $Y$ be $k[t]$-schemes of finite type, and let $f:X \to Y$ be a morphism over $k[t]$. 
Let $\alpha \in X _{\infty}$ be a $k$-arc and let $\alpha ' := f_{\infty} (\alpha)$. 
Let $S$ be the torsion part of $\alpha ^* \Omega _{X / k[t]}$. 
Then we define the \textit{order} $\operatorname{ord}_{\alpha} (\operatorname{jac}_f)$ 
\textit{of the Jacobian} of $f$ at $\alpha$ as the length of the $k[[t]]$-module
\[
\operatorname{Coker} \bigl( \alpha ^{\prime *} \Omega _{Y / k[t]} \to \alpha ^* \Omega _{X / k[t]} /S \bigr).
\]
In particular, if $\operatorname{ord}_{\alpha} (\operatorname{jac}_f) < \infty$, then we have
\[
\operatorname{Coker} \bigl( \alpha ^{\prime *} \Omega _{Y / k[t]} \to \alpha ^* \Omega _{X / k[t]} /S \bigr) 
\simeq 
\bigoplus _i k[t]/(t^{e_i})
\]
as $k[[t]]$-modules with some positive integers $e_i$ satisfying $\sum_i e_i = \operatorname{ord}_{\alpha} (\operatorname{jac}_f)$.

\item By abuse of notation (see Definition 2.7 and Remark 2.8 in \cite{NS22}), we define 
\[
\operatorname{Cont}^e \left( \operatorname{jac}_f \right) := 
\left \{ \alpha \in X_{\infty} \ \middle | \ \operatorname{ord}_{\alpha} \left( \operatorname{jac}_f \right) = e \right\}
\]
for $e \ge 0$. 
\end{enumerate}
\end{defi}

\begin{lem}[{\cite{NS2}*{Lemma 5.40}, cf.\ \cite{NS22}*{Lemma 2.10}}]\label{lem:additive}
Let $n$ be a non-negative integer, and let $X$, $Y$ and $Z$ be $k[t]$-schemes.
Let $f : X \to Y$ and $g: Y \to Z$ be morphisms over $k[t]$. 
Suppose that each irreducible component $W_i$ of $X$, $Y$ and $Z$ has $\dim W_i \ge n + 1$. 
Let $\alpha \in X_{\infty}$ be an arc, and let $\alpha ' := f_{\infty}(\alpha)$. 
Suppose that 
\[
\operatorname{ord}_{\alpha} \left( \operatorname{Fitt}^n \left( \Omega _{X/k[t]} \right) \right) < \infty, \quad 
\operatorname{ord}_{\alpha '} \left( \operatorname{Fitt}^n \left( \Omega _{Y/k[t]} \right) \right) < \infty. 
\]
Then we have 
\[
\operatorname{ord}_{\alpha} \left( \operatorname{jac}_{g \circ f} \right) = 
\operatorname{ord}_{\alpha} \left( \operatorname{jac}_{f} \right) + 
\operatorname{ord}_{\alpha '} \left( \operatorname{jac}_{g} \right). 
\]
\end{lem}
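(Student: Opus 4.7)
The approach is the standard cotangent-sequence argument, carefully tracking torsion. Set $\gamma'' := g_\infty(\gamma') = (g \circ f)_\infty(\gamma)$, and write
\[
M := \gamma^* \Omega_{X/k[t]}, \qquad N := \gamma'^* \Omega_{Y/k[t]}, \qquad P := \gamma''^* \Omega_{Z/k[t]}.
\]
Pulling back the cotangent exact sequences for $f$ and $g$ by $\gamma$ and $\gamma'$ respectively produces $k[[t]]$-linear maps $\alpha : N \to M$ and $\beta : P \to N$, whose composition $\alpha \circ \beta : P \to M$ coincides with the pullback by $\gamma$ of the corresponding map in the cotangent sequence for $g \circ f$. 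Since $k[[t]]$ is a domain, $\alpha$ and $\beta$ send torsion into torsion, so they descend to maps $\bar\alpha : N/T_N \to M/T_M$ and $\bar\beta : P/T_P \to N/T_N$, where $T_M$, $T_N$, $T_P$ denote torsion submodules.

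The next step is to observe that, by definition of $\operatorname{ord}_{\bullet}(\operatorname{jac}_\bullet)$ in Definition \ref{defi:jac_k[[t]]}, the three quantities appearing in the lemma are exactly
\[
\operatorname{ord}_\gamma(\operatorname{jac}_f) = \operatorname{length}_{k[[t]]} \operatorname{Coker}(\bar\alpha), \quad
\operatorname{ord}_{\gamma'}(\operatorname{jac}_g) = \operatorname{length}_{k[[t]]} \operatorname{Coker}(\bar\beta), \quad
\operatorname{ord}_\gamma(\operatorname{jac}_{g\circ f}) = \operatorname{length}_{k[[t]]} \operatorname{Coker}(\bar\alpha \bar\beta).
\]
By the Fitting-ideal hypotheses together with the dimension bound on the components of $X$ and $Y$, the modules $M/T_M$ and $N/T_N$ are free $k[[t]]$-modules of rank $n$ (this is the content of the rank statement underlying \cite{NS2}*{Proposition 5.9}; the condition $\operatorname{ord}_\gamma(\operatorname{Fitt}^n(\Omega_{X/k[t]})) < \infty$ bounds the torsion-free rank from above by $n$, and the assumption on components provides the matching lower bound). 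Under the additional assumption that $\operatorname{ord}_\gamma(\operatorname{jac}_f) < \infty$, the map $\bar\alpha$ must then be injective: between two free $k[[t]]$-modules of equal rank, a non-injective map has a free summand in its cokernel, contradicting finite length.

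With $\bar\alpha$ injective, we obtain a natural short exact sequence
\[
0 \longrightarrow (N/T_N)/\bar\beta(P) \xrightarrow{\ \bar\alpha\ } (M/T_M)/\bar\alpha\bar\beta(P) \longrightarrow (M/T_M)/\bar\alpha(N) \longrightarrow 0,
\]
where the injectivity of the left map uses that $\bar\alpha$ is injective and hence identifies $(N/T_N)/\bar\beta(P)$ with $\bar\alpha(N/T_N)/\bar\alpha\bar\beta(P)$, a submodule of $(M/T_M)/\bar\alpha\bar\beta(P)$. Additivity of length in this short exact sequence yields precisely the desired equality.

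The main obstacle is the case when one of the three orders is infinite. I would handle this by the following dichotomy: if $\operatorname{ord}_\gamma(\operatorname{jac}_f) = \infty$, then $\bar\alpha$ has a free summand in its cokernel (equivalently, fails to be injective in our setting), and a direct check in the above exact sequence shows that $\operatorname{Coker}(\bar\alpha\bar\beta)$ inherits an infinite-length quotient, forcing $\operatorname{ord}_\gamma(\operatorname{jac}_{g\circ f}) = \infty$ as well; symmetrically if $\operatorname{ord}_{\gamma'}(\operatorname{jac}_g) = \infty$. In both cases both sides of the claimed equality are $\infty$, so the formula still holds under the convention $\infty + a = \infty$. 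The rank-$n$ assertion in Step 3 is the most delicate technical point and is where one genuinely needs the hypothesis on dimensions of irreducible components; everything else is formal additivity of length.
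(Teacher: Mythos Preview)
The paper does not supply its own proof of this lemma; it is quoted from \cite{NS2}*{Lemma 5.40} (cf.\ \cite{NS22}*{Lemma 2.10}) as a preliminary fact. Your cotangent-sequence argument is the standard one and is correct: the key technical point, as you identify, is that the hypotheses force $M/T_M$ and $N/T_N$ to be free of rank exactly $n$ over $k[[t]]$, after which injectivity of $\bar\alpha$ and additivity of length in the displayed short exact sequence give the result. Your handling of the infinite cases is also sound. This is essentially the argument one finds in the cited references.
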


\begin{prop}[{\cite{DL02}*{Lemma 1.17, Remark 1.19}, \cite{NS2}*{Proposition 5.43}, \cite{NS22}*{Proposition 2.33}}]\label{prop:EM6.2_k[t]}
Let $n$ be a non-negative integer. 
Let $X$ and $Y$ be $k[t]$-schemes.
Let $f: X \to Y$ be a morphism over $k[t]$. 
Suppose that each irreducible component $W_i$ of $X$ and $Y$ has $\dim W_i \ge n + 1$. 
Let $e,e',e''\in \mathbb Z_{\ge 0}$. 
Let $A \subset X_{\infty}$ be a cylinder and let $B = f_{\infty}(A)$. 
Assume that 
\[
A  \subset \operatorname{Cont}^{e''} \left( \operatorname{Fitt}^n \left( \Omega _{X/k[t]} \right)  \right) 
	\cap \operatorname{Cont}^{e} \left( \operatorname{jac}_f \right), \quad
B  \subset \operatorname{Cont}^{e'} \left( \operatorname{Fitt}^n \left( \Omega _{Y/k[t]} \right)  \right). 
\]
Then, $B$ is a cylinder of $Y_{\infty}$. 
Moreover, if $f_{\infty}|_{A}$ is injective, then it follows that
\[
\operatorname{codim}(A) + e = \operatorname{codim}(B).
\]
\end{prop}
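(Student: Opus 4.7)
The plan is to reduce to a statement at a finite jet level for $m$ sufficiently large, and then to combine (a) a constructibility-plus-lifting argument showing that $B$ is a cylinder with (b) a fiber-dimension computation that yields the codimension identity. Since this is stated as a generalization of \cite{DL02}*{Lemma 1.17} and \cite{NS22}*{Proposition 2.33}, the skeleton follows those references.

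First I would fix $m_0$ large enough that $A = \psi_m^{-1}(A_m)$ for $A_m := \psi_m(A) \subset X_m$ and every $m \ge m_0$, which is possible since $A$ is a cylinder. Because $f_m : X_m \to Y_m$ is a morphism of finite-type $k$-schemes, $B_m := f_m(A_m)$ is a constructible subset of $Y_m$. The first task is to show $B = \psi_m^{-1}(B_m)$ for all $m$ sufficiently large, which will exhibit $B$ as a cylinder. The inclusion $B \subset \psi_m^{-1}(B_m)$ is immediate, and the reverse inclusion is a lifting statement: given $\delta \in Y_\infty$ whose $m$-jet equals $f_m(\psi_m(\gamma))$ for some $\gamma \in A$, produce $\gamma' \in A$ with $f_\infty(\gamma') = \delta$. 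The uniform Jacobian bound $\operatorname{ord}_\gamma(\operatorname{jac}_f) = e$, together with the Fitting-ideal bounds $e''$ on $X$ and $e'$ on $Y$, supplies a Hensel-type lifting in the relative setting over $k[t]$ once $m$ exceeds an explicit threshold depending on $e, e', e''$; any lift $\gamma'$ of $\psi_m(\gamma)$ automatically lies in $A$ because $A$ is a cylinder of truncation level at most $m$.

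Second, I would establish the fiber-dimension equality: for $m$ large, the restricted morphism $f_m|_{A_m} : A_m \to B_m$ satisfies $\dim A_m = \dim B_m + e$. The upper bound follows from a tangent-space computation: the cokernel appearing in Definition \ref{defi:jac_k[[t]]}, whose total length is $e$, controls the infinitesimal fibers of $f_m$ along $A_m$. The lower bound uses the injectivity hypothesis: for two distinct points of $A_m$ lying over the same point of $B_m$, they must disagree only in the highest-order coordinates controlled by the $e_i$ of Definition \ref{defi:jac_k[[t]]}, and the full Hensel-type argument shows that the generic fiber is a nonempty affine space of dimension exactly $e$. Combining this with
\begin{align*}
\operatorname{codim}(A) &= (m+1)n - \dim A_m, \\
\operatorname{codim}(B) &= (m+1)n - \dim B_m,
\end{align*}
which are valid for large $m$ by Definition \ref{defi:codim} and the Fitting-ideal hypotheses, one obtains $\operatorname{codim}(A) + e = \operatorname{codim}(B)$.

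The principal obstacle is a careful bookkeeping of how the truncation $\psi_m$ interacts with the torsion part of $\gamma^*\Omega_{X/k[t]}$ in Definition \ref{defi:jac_k[[t]]}: one must verify that for $m$ above a threshold depending on $e, e', e''$, the passage to the torsion-free quotient becomes invisible at the $m$-jet level, so that the Hensel-type lifting and the dimension count both go through. The only extra subtlety relative to \cite{NS22}*{Proposition 2.33} is that here $n$ need not equal the relative dimension of every irreducible component; however, the components with $\dim W_i > n+1$ do not contribute to either side under the Fitting-ideal order constraints, since cylinders meeting such components would have infinite codimension in the sense of Definition \ref{defi:codim}. With these reductions in place, the proof reduces to the same bookkeeping as in \cite{NS2}*{Proposition 5.43}.
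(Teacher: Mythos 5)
The paper does not prove this proposition: it is quoted as-is from the cited sources (\cite{DL02}*{Lemma 1.17, Remark 1.19}, \cite{NS22}*{Proposition 2.33}, \cite{NS2}*{Proposition 5.43}), so there is no in-paper argument to compare against. Your sketch reproduces the standard strategy of those references — truncate to level $m$, show $B=\psi_m^{-1}(f_m(\psi_m(A)))$ by a Newton--Hensel lifting controlled by $e$, $e'$, $e''$, and then count fiber dimensions of $f_m|_{A_m}$ — and that is indeed how the cited proofs go, so the overall architecture is sound.

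Two bookkeeping points in your sketch are off and would need repair if you executed it. First, the arc $\gamma'$ produced by the approximation argument is not a lift of $\psi_m(\gamma)$: the lifting loses $e$ digits, so one only gets $\gamma'\equiv\gamma \pmod{t^{m+1-e}}$; to conclude $\gamma'\in A$ you need $m+1-e$ to exceed the level at which the cylinder $A$ is defined, not merely $m\ge m_0$. Second, you have the roles of the two hypotheses in the fiber-dimension count reversed. The approximation/lifting argument (Smith normal form applied to the $e$-torsion cokernel of Definition \ref{defi:jac_k[[t]]}) shows that each nonempty fiber of $f_m|_{A_m}$ \emph{contains} an affine space of dimension $e$, with no injectivity needed — that is the lower bound on the fiber. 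It is the injectivity of $f_{\infty}|_A$ that gives the \emph{upper} bound: if $f_m(\delta)=f_m(\delta')$ for $\delta=\psi_m(\gamma)$, $\delta'=\psi_m(\gamma')$ with $\gamma,\gamma'\in A$, one lifts to an arc $\gamma''\in A$ with $f_\infty(\gamma'')=f_\infty(\gamma)$ and $\gamma''\equiv\gamma'\pmod{t^{m+1-e}}$, and injectivity forces $\gamma''=\gamma$, confining the fiber to a single $e$-dimensional affine space. A ``tangent-space computation'' alone will not bound the fiber from above without this step (compare Proposition \ref{prop:DL2_k[t]}, where injectivity is replaced by $G$-invariance precisely to control the fibers). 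These are fixable slips rather than a wrong approach, but they are exactly the places where the cited proofs do nontrivial work.
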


\begin{prop}[{\cite{DL02}*{Lemma 3.5}, \cite{NS2}*{Proposition 5.44}, \cite{NS22}*{Proposition 2.35}}]\label{prop:DL2_k[t]}
Let $n$ be a non-negative integer. Let $X$ be a $k[t]$-scheme. 
Suppose that a finite group $G$ acts on a $k[t]$-scheme $X$. 
Suppose that each irreducible component $X_i$ of $X$ has $\dim X_i \ge n + 1$. 
Let $f: X \to Y := X/G$ be the quotient morphism. 
Let $A \subset X_{\infty}$ be a $G$-invariant cylinder and let $B = f_{\infty} (A)$. 
Let $e,e',e''\in \mathbb Z_{\ge 0}$. 
Assume that 
\[
A  \subset \operatorname{Cont}^{e''} \left( \operatorname{Fitt}^n \left( \Omega _{X/k[t]} \right)  \right) 
	\cap \operatorname{Cont}^{e} \left( \operatorname{jac}_f \right), \quad
B  \subset \operatorname{Cont}^{e'} \left( \operatorname{Fitt}^n \left( \Omega _{Y/k[t]} \right)  \right). 
\]
Then $B$ is a cylinder of $Y_{\infty}$ with 
\[
\operatorname{codim}(A)+e=\operatorname{codim}(B).
\]
\end{prop}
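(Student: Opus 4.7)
The plan is to follow the strategy of \cite{DL02}*{Lemma 3.5} and \cite{NS22}*{Proposition 2.35}: reduce the equivariant statement to the non-equivariant Proposition \ref{prop:EM6.2_k[t]} by decomposing $A$ into subcylinders on which $f_{\infty}$ becomes injective, then use $G$-equivariance and Proposition \ref{prop:negligible} to read off the codimension.

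First I would show that the $G$-action on $A$ is free. Indeed, if some $g \in G \setminus \{ e \}$ stabilised an arc $\gamma \in A$, then $\gamma$ would factor through the fixed subscheme $X^{\langle g \rangle}$. Since $f$ ramifies along such loci, this would force $\operatorname{ord}_{\gamma}(\operatorname{jac}_f) = \infty$, contradicting $\gamma \in \operatorname{Cont}^{e}(\operatorname{jac}_f)$ with $e$ finite.

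Next I would pick $m \gg 0$ so that $A = \psi_m^{-1}(S)$ for a $G$-invariant constructible subset $S \subset X_m$. Away from a Zariski closed subset $S' \subset S$ of strictly smaller dimension (the $m$-jets fixed by some non-identity element of $G$), the orbits in $S$ have full size $|G|$, and a standard argument produces a constructible cross-section $S_0 \subset S \setminus S'$ with $S \setminus S' = \bigsqcup_{g \in G} g \cdot S_0$. Setting $A_0 := \psi_m^{-1}(S_0)$, we obtain disjoint subcylinders $g \cdot A_0$ $(g \in G)$ of $A$ whose complement in $A$ is thin. The restriction $f_{\infty}|_{A_0}$ is injective by construction, so Proposition \ref{prop:EM6.2_k[t]} shows that $f_{\infty}(A_0)$ is a cylinder in $Y_{\infty}$ with $\operatorname{codim}(A_0) + e = \operatorname{codim}(f_{\infty}(A_0))$. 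Since every $g \cdot A_0$ has the same codimension as $A_0$ by $G$-equivariance, Proposition \ref{prop:negligible} yields $\operatorname{codim}(A) = \operatorname{codim}(A_0)$. On the $Y$-side, $B$ differs from $f_{\infty}(A_0)$ only by the image of the thin complement, which remains thin because $f$ is finite; verifying that $B$ itself is a cylinder (for instance by enlarging $m$ so that $\psi_m(B) = f_m(\psi_m(A))$ detects $B$) and applying Proposition \ref{prop:negligible} once more gives $\operatorname{codim}(B) = \operatorname{codim}(f_{\infty}(A_0))$. Combining the three identities yields $\operatorname{codim}(A) + e = \operatorname{codim}(B)$.

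The main obstacle is the careful verification that the various exceptional pieces removed along the way—fixed $m$-jet loci for non-identity $g \in G$, the complement of the cross-section in $S$, and the corresponding images in $Y_{\infty}$—are genuinely thin in the sense of Definition \ref{defi:thin}, i.e.\ contained in $Z_{\infty}$ for some closed subscheme $Z$ of dimension $\le n$. This is controlled by the finiteness of $e$, which prevents arcs in $A$ from lying inside the ramification locus of $f$, together with standard dimension estimates for fixed loci of finite group actions at each jet level and the relative dimension bound on $X$.
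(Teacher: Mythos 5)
The paper itself does not prove this proposition; it quotes it from \cite{DL02}*{Lemma 3.5} and \cite{NS22}*{Proposition 2.35}, and the proofs there follow exactly the architecture you describe: decompose $A$ into $G$-translates of a subcylinder on which $f_{\infty}$ is injective, apply Proposition \ref{prop:EM6.2_k[t]} to that piece, and recombine using Proposition \ref{prop:negligible} and the $G$-equivariance. So your overall strategy is the right one. There is, however, a genuine gap at precisely the step you flag as ``the main obstacle''. The set $A \cap \psi_m^{-1}(S')$ of arcs in $A$ whose $m$-jet is fixed by some $g \neq e$ is itself a cylinder, and a subcylinder of ``strictly smaller dimension'' at level $m$ is \emph{not} thin in the sense of Definition \ref{defi:thin}: thinness means containment in $W_{\infty}$ for a closed subscheme $W$ with $\dim W \le n$, and a nonempty cylinder of positive codimension essentially never satisfies this. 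Consequently Proposition \ref{prop:negligible} cannot be applied to the decomposition $A \supset \bigsqcup_{g} g \cdot A_0$ as you have arranged it. What is actually needed, and what Denef and Loeser prove, is that for $m$ large enough relative to $e$ this exceptional set is \emph{empty}: if $\operatorname{ord}_{\gamma}(\operatorname{jac}_f) = e < \infty$ and $g \cdot \gamma \equiv \gamma \pmod{t^{m+1}}$ with $m \ge 2e$ (say), then $g \cdot \gamma = \gamma$, which your freeness observation then rules out. This quantitative comparison of jet-level and arc-level stabilizers is the technical heart of the lemma; the dimension estimate you substitute for it is both unproved and, even if true, not sufficient for the conclusion you draw from it.

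Two smaller points. First, the freeness claim is stated too strongly for the generality of the proposition: if $G$ acts non-faithfully, or if some $g \neq e$ fixes an irreducible component of $X$ pointwise while $f$ is \'{e}tale on that component, then $X^{\langle g \rangle}$ is not contained in the ramification locus. The correct assertion is that an arc fixed by $g \neq e$ whose generic point lies in the locus where $\Omega_{X/Y} \neq 0$ has infinite Jacobian order, because the cokernel in Definition \ref{defi:jac_k[[t]]} then has positive rank over $k[[t]]$; the degenerate cases must be dealt with separately (harmlessly, since $f_{\infty}$ is injective there). Second, the injectivity of $f_{\infty}|_{A_0}$ also uses that two arcs of $A$ with the same image in $Y_{\infty}$ differ by an element of $G$ acting over $k$; this holds because the two generic points lie in the same geometric fibre of $f$ and $G$ is a constant group, but it deserves a sentence, as the fibre of $f$ over a $K((t))$-point is a $G$-orbit only after passing to an algebraic closure.
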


\begin{lem}[{\cite{NS22}*{Lemma 2.34}}]\label{lem:EM8.4}
Let $n$ and $N$ be non-negative integers with $n \le N$. 
Set $c := N - n$. 
Let $f_1, \ldots , f_c \in k[x_1,\ldots,x_N][t]$, and 
let $I_X = ( f_1, \ldots , f_c ) \subset k[x_1,\ldots,x_N][t]$ be the ideal generated by them. 
We set 
\[
A := \operatorname{Spec} k[x_1,\ldots,x_N][t], \quad 
X := \operatorname{Spec} \left( k[x_1,\ldots,x_N][t] / I_X \right). 
\]
Let $C \subset A_{\infty}$ be an irreducible locally closed cylinder. 
If 
\begin{itemize}
\item $C \subset \operatorname{Cont}^{\ge d} \bigl( f_1 \cdots f_c \bigr)$ and 
\item $C \cap X_{\infty} \cap \operatorname{Cont}^{e}\left( \operatorname{Fitt}^n \left( \Omega _{X/k[t]} \right) \right) \not = \emptyset$
\end{itemize}
hold for some $d \ge 0$ and $e \ge 0$, 
then it follows that
\[
\operatorname{codim}_{X_{\infty}} (C \cap X_{\infty}) \le \operatorname{codim}_{A_{\infty}} (C) + e - d.
\]
\end{lem}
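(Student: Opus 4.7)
\emph{Plan.} I would follow the approach of \cite{EM09}*{Lemma 8.4} and its relative adaptation \cite{NS22}*{Lemma 2.34}: convert the codimension inequality into a dimension estimate for truncations $\psi_m$ with $m \gg 0$, and argue via an Artin/Hensel-type lifting between $X_m$ and $X_\infty$.

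Since $A$ is smooth over $k[t]$, Definition \ref{defi:codim} gives $\operatorname{codim}_{A_\infty}(C) = (m+1)N - \dim \psi_m(C)$ for $m$ large. Setting $C' := C \cap X_\infty \cap \operatorname{Cont}^e(\operatorname{Fitt}^n(\Omega_{X/k[t]}))$ (nonempty by hypothesis), we have $\operatorname{codim}_{X_\infty}(C \cap X_\infty) \le \operatorname{codim}_{X_\infty}(C') = (m+1)n - \dim \psi_m(C')$, so the lemma reduces to
\[
\dim \psi_m(C') \;\ge\; \dim \psi_m(C) - c(m+1) + d - e
\]
for $m$ large. After shrinking $C$ to an irreducible subcylinder still containing an arc $\gamma_0 \in C'$, we may assume $C \subset \bigcap_{i=1}^c \operatorname{Cont}^{d_i}(f_i)$ with $\sum_i d_i \ge d$. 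The jet scheme $X_m \subset A_m$ is locally cut out by the $c(m+1)$ coefficients of $s^0, \ldots, s^m$ in $f_i(\bar\gamma)$; on $\psi_m(C)$ the $\sum_i d_i \ge d$ lowest of these coefficients vanish identically, so every irreducible component of $\psi_m(C) \cap X_m$ --- in particular the one through $\psi_m(\gamma_0)$ --- has dimension at least $\dim \psi_m(C) - c(m+1) + d$.

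To finish, I would produce sufficiently many lifted arcs in $\psi_m(C')$. An Artin/Hensel-type lemma (as used in \cite{NS22}*{Lemma 2.34}) implies that for $m$ large, every jet in the Zariski-open subset $U_m \subset \psi_m(C) \cap X_m$ at which some $c \times c$ minor of $(\partial f_i/\partial x_j)(\bar\gamma)$ has $s$-order at most $e$ lifts to an arc in $X_\infty \cap C$ whose $\operatorname{Fitt}^n$-order equals the jet-level order. The condition that this minor has $s$-order $\ge e$ is cut out by at most $e$ equations (the vanishing of the first $e$ coefficients of the minor), so the locus where the order is exactly $e$ meets the component of $\psi_m(C) \cap X_m$ through $\psi_m(\gamma_0)$ in dimension at least $\dim(\psi_m(C) \cap X_m) - e$. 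Combining with the previous paragraph yields the desired inequality. The main obstacle is making the Hensel step precise in the $k[t]$-relative setting and tracking the $\psi_m$-image of the Hensel lifts (whose $m$-truncation may differ from the original jet beyond level $m-e$); this accounts for most of the technical work in the analogous argument of \cite{NS22}*{Lemma 2.34}, and I would largely reproduce that bookkeeping.
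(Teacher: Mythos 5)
Your proposal takes essentially the same route as the paper: the paper's entire proof consists of decomposing $C$ according to the individual contact orders of the $f_i$ (so that each piece lies in $\bigcap_{i=1}^{c}\operatorname{Cont}^{\ge d_i}(f_i)$ with $\sum_i d_i = d$) and then citing \cite{NS22}*{Lemma 2.34}, whose content is precisely the jet-level dimension count and Hensel-type lifting that you sketch. One small caution: ``shrinking $C$ to an irreducible subcylinder containing $\gamma_0$'' can decrease $\dim\psi_m(C)$ and hence weaken the inequality you are reducing to; this is avoided by observing that, since $C$ is irreducible and is covered by the finitely many closed subcylinders $C\cap\bigcap_i\operatorname{Cont}^{\ge d_i}(f_i)$ with $\sum_i d_i=d$, one of them must already equal $C$, so no shrinking (and no loss of dimension) is needed.
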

\begin{proof}
We set $I :=  \left\{ 
(d_1, \ldots , d_c) \in \mathbb{Z}_{\ge 0} ^c \ \middle | \ d_1 + \cdots + d_c = d  
\right \}$. 
For ${\bf d} \in I$, we define $C _{\bf d} \subset X_{\infty}$ by 
\[
C _{\bf d} := C \cap \bigcap_{i=1}^c \operatorname{Cont}^{\ge d_i} (f_i). 
\]
Since $C = \bigcup _{{\bf d} \in I} C_{\bf d}$, the assertion follows from the inequality
\[
\operatorname{codim}_{X_{\infty}} \left( C _{\bf d} \cap X_{\infty} \right) 
\le 
\operatorname{codim}_{A_{\infty}} \left( C_{\bf d} \right) + e - d,
\]
which is proved by \cite{NS22}*{Lemma 2.34}. 
\end{proof}

\section{Settings and Denef and Loeser’s theory}\label{section:setting}
Let $N$ be a positive integer. Set $R := k[x_1, \ldots , x_N]$. 
Let $d$ be a positive integer and let $\xi \in k$ be a primitive $d$-th root of unity. 
Let $G \subset \operatorname{GL}_N(k)$ be a finite subgroup with order $d$ that linearly acts on 
$\overline{A} := \mathbb{A}^N _k = \operatorname{Spec} R$. 
We denote by 
\[
A := \overline{A}/G = \operatorname{Spec} R^G
\]
the quotient scheme. 
Let $Z \subset A$ be the minimum closed subset such that $\overline{A} \to A$ is \'{e}tale outside $Z$.
We assume that $\operatorname{codim}_A Z \ge 2$, and hence the quotient map $\overline{A} \to A$ is \'{e}tale in codimension one. 

\begin{defi}\label{defi:semiinv}
Let $f\in R$.
\begin{enumerate}
\item
We say that $f$ is \textit{$G$-semi-invariant} if for any $\gamma \in G$, there exists $0 \le a \le d-1$ such that $\gamma (f) =\xi^af$.

\item
Suppose that $f$ is $G$-semi-invariant. 
For $\gamma \in G$, we define $w_{\gamma}(f):=\frac{a}{d}$ when $\gamma (f) =\xi^a f$ with $0 \le a \le d-1$.
\end{enumerate}
\end{defi}

Let $c$ be a non-negative integer with $c \le N$. 
We set $n := N - c$. 
Let $f_1,\ldots,f_c \in R$ be a regular sequence which is contained in the maximal ideal $(x_1, \ldots , x_N)$ at the origin. 
Suppose that $f_1, \ldots, f_c$ are $G$-semi-invariant.
We set 
\[
I_{\overline{B}} := \left( f_1, \ldots , f_c \right), \quad 
\overline{B} := \operatorname{Spec} \left( R/I_{\overline{B}} \right). 
\]
We denote 
\[
I_B := I_{\overline{B}} \cap R^G, \quad 
B := \overline{B}/G = \operatorname{Spec} \left( R^G/I_B \right). 
\]
We assume that $B$ is normal and $\operatorname{codim} _B (B \cap Z) \ge 2$. 
\begin{lem}\label{lem:etaleness of B}
\begin{enumerate}
\item
The quotient map $\overline{B} \to B$ is \'{e}tale outside $Z$.
\item
$\overline{B}$ is normal. 
\end{enumerate}
\end{lem}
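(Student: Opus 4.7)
The plan is to prove both statements by combining the \'{e}taleness of the ambient quotient $\overline{A} \to A$ outside $Z$ with Serre's normality criterion applied to the complete intersection $\overline{B} \subset \overline{A}$. Throughout, let $\pi: \overline{A} \to A$ denote the quotient map.

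For (1), I first observe that although each $f_i$ is only $G$-semi-invariant, the ideal $I_{\overline{B}} = (f_1, \ldots, f_c)$ is $G$-invariant, since for every $\alpha \in G$ one has $\alpha(f_i) = \xi^{a_i} f_i \in I_{\overline{B}}$. Hence $G$ acts on $\overline{B}$, and a Reynolds-operator argument (valid in characteristic zero) gives $(R/I_{\overline{B}})^G = R^G/I_B$, so $\overline{B}/G = B$ as in the setup. The assumed \'{e}taleness of $\overline{A} \to A$ outside $Z$ is equivalent, since $G$ is finite, to $G$ acting freely on $\overline{U} := \overline{A} \setminus \pi^{-1}(Z)$. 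Restricting this free action to the $G$-invariant closed subscheme $\overline{B} \cap \overline{U}$ keeps the action free, so the quotient morphism $\overline{B} \cap \overline{U} \to (\overline{B} \cap \overline{U})/G$ is \'{e}tale. Since quotients by finite groups commute with passage to $G$-invariant open subsets, this quotient is $B \setminus Z$, which gives (1).

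For (2), I would verify the two conditions $R_1$ and $S_2$ of Serre's criterion. The condition $S_2$ is immediate: since $f_1, \ldots, f_c$ is a regular sequence in the regular ring $R$, the scheme $\overline{B}$ is Cohen--Macaulay, hence satisfies $S_n$ for all $n$. For $R_1$: on the open subset $\overline{B} \setminus \pi^{-1}(Z)$, part (1) provides an \'{e}tale morphism to $B \setminus Z$, and since \'{e}tale morphisms preserve regularity in both directions, the singular locus of $\overline{B} \setminus \pi^{-1}(Z)$ maps to the singular locus of $B$, which has codimension at least two by the normality of $B$. The complementary locus $\pi^{-1}(Z) \cap \overline{B}$ also has codimension at least two in $\overline{B}$, because the finite surjective morphism $\overline{B} \to B$ preserves codimensions of closed subsets and $\operatorname{codim}_B(B \cap Z) \ge 2$ is part of the running hypotheses. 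Combining these two facts, the singular locus of $\overline{B}$ has codimension at least two, so $\overline{B}$ is $R_1$, and Serre's criterion concludes the proof of (2).

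The main obstacle is essentially bookkeeping rather than deep: one must verify that the semi-invariance (rather than outright $G$-invariance) of the $f_i$ still produces a $G$-invariant ideal and a compatible quotient description $\overline{B}/G = B$; once this is secured, both parts reduce to standard applications of the freeness of the action outside $Z$ and the Cohen--Macaulay property of complete intersections.
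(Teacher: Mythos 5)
Your proposal is correct, but part (1) is argued along a genuinely different route from the paper, so a comparison is in order. For (1), the paper does not invoke the freeness of the $G$-action at all: it compares $\overline{B}$ with the scheme-theoretic inverse image $q^{-1}(B)$, observes that $q^{-1}(B)\to B$ is \'{e}tale outside $Z$ by base change, and uses the reducedness of $B$ (hence of $q^{-1}(B)\setminus q^{-1}(Z)$) together with $\overline{B}_{\mathrm{red}}=(q^{-1}(B))_{\mathrm{red}}$ to conclude that $\overline{B}$ and $q^{-1}(B)$ coincide away from $Z$; this same comparison is reused later (in the proof of Lemma \ref{lem:property B_i}) and is why the paper phrases it that way. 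Your route instead uses the $G$-invariance of $I_{\overline{B}}$, the Reynolds-operator identification $(R/I_{\overline{B}})^G=R^G/I_B$, and the fact that a set-theoretically free action of a finite group of invertible order makes the quotient map a $G$-torsor, hence finite \'{e}tale. This is valid and arguably more conceptual (it exhibits $\overline{B}\setminus q^{-1}(Z)\to B\setminus Z$ as a torsor), but note two points you should make explicit: first, at this stage $\overline{B}$ is not yet known to be reduced, so you must use the torsor statement in the generality of possibly non-reduced affine schemes (it does hold there); second, the identification of $\overline{B}/G$ with $B=\operatorname{Spec}(R^G/I_B)$ genuinely requires the surjectivity of $R^G/(I_{\overline{B}}\cap R^G)\to (R/I_{\overline{B}})^G$, which is exactly where the averaging argument in characteristic zero enters. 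Your part (2) is essentially identical to the paper's: Cohen--Macaulayness of the complete intersection gives $S_2$, the \'{e}taleness from (1) together with the normality of $B$ and the hypothesis $\operatorname{codim}_B(B\cap Z)\ge 2$ gives $R_1$, and Serre's criterion concludes.
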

\begin{proof}
We prove (1). Let $q:\overline{A} \to A$ denote the quotient map. 
Let $q^{-1}(B) \subset \overline{A}$ be the scheme theoretic inverse image. 
Then we have $\overline{B} \subset q^{-1}(B)$. 
Since $B$ is reduced and $q^{-1}(B) \to B$ is \'{e}tale outside $Z$, 
it follows that $q^{-1}(B) \setminus q^{-1}(Z)$ is reduced. 
Since $\overline{B} _{\rm red} = (q^{-1}(B)) _{\rm red}$, we have $q^{-1}(B) \setminus q^{-1}(Z) = \overline{B} \setminus q^{-1}(Z)$. Therefore, we conclude that $\overline{B} \to B$ is \'{e}tale outside $Z$. 

We prove (2). Since $B$ is normal, we have $\operatorname{codim} _B (B_{\rm sing}) \ge 2$. 
Therefore, by (1) and the assumption $\operatorname{codim} _B (B \cap Z) \ge 2$, we have $\operatorname{codim} _{\overline{B}} \left( \overline{B}_{\rm sing} \right) \ge 2$. 
Since the sequence $f_1, \ldots, f_c$ is a regular sequence, the normality of $\overline{B}$ follows from Serre's criterion. 
\end{proof}

\begin{rmk}\label{rmk:codim 2}
In \cite{NS22}, the condition $\operatorname{codim} _B (B \cap Z) \ge 2$ is not assumed. 
In fact, if $f_1, \ldots, f_c$ are $G$-invariant, 
then the condition $\operatorname{codim} _B (B \cap Z) \ge 2$ follows from
the normality of $B$ and the condition $\operatorname{codim} _A  Z \ge 2$ (see \cite{NS22}*{Lemma 4.2}). 
However, the following example shows that this is false in general 
when $f_1, \ldots, f_c$ are just assumed to be $G$-semi-invariant.

For $\overline{A}=\operatorname{Spec} \bigl( k[x,y] \bigr)$, 
$G=\langle\operatorname{diag}(-1,-1)\rangle$ and 
$\overline{B}=\operatorname{Spec} \bigl( k[x,y]/(x) \bigr)=\operatorname{Spec} \bigl( k[y] \bigr)$, we have
\begin{align*}
A&=\overline{A}/G=\operatorname{Spec} \bigl( k[x^2,xy,y^2] \bigr),\\
B&=\overline{B}/G=\operatorname{Spec} \bigl( k[x^2,xy,y^2]/(x^2, xy) \bigr)=\operatorname{Spec} \bigl( k[y^2] \bigr), \\
Z&=\operatorname{Spec} \bigl( k[x^2,xy,y^2] /(x^2,xy,y^2)\bigr).
\end{align*}
Then, $\overline{A} \to A$ is \'{e}tale outside $Z$ and $\operatorname{codim}_A Z = 2$.
However, we have $\operatorname{codim} _B (B \cap Z) =1$.
Moreover, $\overline{B} \to B$ is not \'{e}tale in codimension one.
\end{rmk}
We take a positive integer $r$ such that $\omega _{B}^{[r]}$ is invertible.

Let $\gamma \in G$. 
Since $G$ is a finite group, $\gamma$ can be diagonalized with respect to a new basis $x_1^{(\gamma)}, \ldots, x_N^{(\gamma)}$ consisting of $k$-linear combinations of the original basis $x_1, \ldots , x_N$.
Let $\operatorname{diag} \left( \xi ^{e_1}, \ldots , \xi ^{e_N} \right)$ be the diagonal matrix 
with $0 \le e_i \le d-1$. 
Then, the \textit{age} $\operatorname{age}(\gamma)$ of $\gamma$ is defined by $\operatorname{age}(\gamma) := \frac{1}{d} \sum _{i = 1} ^N e_i$. 
We also define a $k$-algebra homomorphism ${\lambda}^* _{\gamma}:R \to R[t^{1/d}]$ as follows:
\[
{\lambda}^* _{\gamma}: R \to R[t^{1/d}]; 
\quad x_i^{(\gamma)} \mapsto t^{e_i/d}x_i^{(\gamma)}. 
\]
We note that the ring homomorphism ${\lambda}^* _{\gamma}$ does not depend on the choice of the basis $x_1^{(\gamma)}, \ldots, x_N^{(\gamma)}$ as long as $\gamma$ is diagonalized with respect to this basis. 
Let $C_{\gamma}$ denote the centralizer of $\gamma$ in $G$. 

\begin{rmk}
In \cite{NS22}*{Section 3}, $\lambda^*_{\gamma}$ is defined as a $k[t]$-ring homomorphism from $R[t]^G$ to $R[t]^{C_{\gamma}}$ satisfying $\lambda^*_{\gamma} (x_i^{(\gamma)}) = t^{e_i/d}x_i^{(\gamma)}$. 
Although the domain and codomain are different, we use the same symbol because the map in \cite{NS22} is the restriction of the base change of our $\lambda^*_{\gamma}$ to $R[t]^G$.
\end{rmk}

\begin{lem}\label{lem:lambda}
The following assertions hold. 
\begin{enumerate}
\item 
For any $\delta \in C_{\gamma}$, the composition $R \xrightarrow{\delta} R \xrightarrow{\lambda ^* _{\gamma}} R[t^{1/d}]$ coincides with the composition $R \xrightarrow{\lambda ^* _{\gamma}} R[t^{1/d}] \xrightarrow{\delta} R[t^{1/d}]$. 
Here, $R[t^{1/d}] \xrightarrow{\delta} R[t^{1/d}]$ is the $k[t^{1/d}]$-ring homomorphism induced by $R \xrightarrow{\delta} R$.

\item
We have 
$\lambda^* _{\gamma} \left( R^G \right) \subset R^{C_{\gamma}}[t]$. 

\item 
If $f \in R$ is a $G$-semi-invariant element, then 
for any $\delta \in G$, we have $\lambda ^* _{\delta}(f)t^{-w_{\delta}(f)} \in R[t]$.
\end{enumerate}
\end{lem}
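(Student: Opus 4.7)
The three parts are all direct computations in an appropriate diagonalizing basis, and I would prove them in order, reusing (1) in the proof of (2).

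For (1), the key observation is that since $\alpha \in C_{\gamma}$ commutes with $\gamma$, the automorphism $\alpha$ preserves each $\gamma$-eigenspace; equivalently, $\alpha(x_i^{(\gamma)})$ is a $k$-linear combination of those $x_j^{(\gamma)}$ with $e_j = e_i$. The plan is to check that the two $k$-algebra homomorphisms agree on the generators $x_i^{(\gamma)}$: the first composition sends $x_i^{(\gamma)}$ to $\lambda^*_{\gamma}(\alpha(x_i^{(\gamma)})) = t^{e_i/d}\,\alpha(x_i^{(\gamma)})$ (by the observation above), while the second sends it to $\alpha(t^{e_i/d} x_i^{(\gamma)}) = t^{e_i/d}\,\alpha(x_i^{(\gamma)})$, using that $\alpha$ is a $k[t^{1/d}]$-homomorphism and hence fixes $t^{1/d}$. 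Since they agree on generators, they are equal.

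For (2), I would first expand $f \in R^G$ as a polynomial in the $\gamma$-eigenbasis $x_1^{(\gamma)}, \ldots, x_N^{(\gamma)}$. Since $f$ is in particular $\langle \gamma \rangle$-invariant, every monomial $\prod_i (x_i^{(\gamma)})^{m_i}$ appearing in $f$ must satisfy $\sum_i m_i e_i \equiv 0 \pmod d$, so the exponent $\sum_i m_i e_i / d$ of $t$ produced by $\lambda^*_{\gamma}$ on this monomial is an integer; hence $\lambda^*_{\gamma}(f) \in R[t]$. Applying (1) gives $\alpha(\lambda^*_{\gamma}(f)) = \lambda^*_{\gamma}(\alpha(f)) = \lambda^*_{\gamma}(f)$ for every $\alpha \in C_{\gamma}$; since $C_{\gamma}$ acts trivially on $t$, writing $\lambda^*_{\gamma}(f) = \sum_j g_j t^j$ with $g_j \in R$, the $C_{\gamma}$-invariance forces each $g_j \in R^{C_{\gamma}}$, and we conclude $\lambda^*_{\gamma}(f) \in R^{C_{\gamma}}[t]$.

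For (3), suppose $\alpha(f) = \xi^a f$ with $0 \le a \le d-1$, so $w_{\alpha}(f) = a/d$. Expanding $f$ in the $\alpha$-eigenbasis and comparing coefficients, each monomial $\prod_i (x_i^{(\alpha)})^{m_i}$ occurring in $f$ satisfies $\sum_i m_i e_i \equiv a \pmod d$, so $\sum_i m_i e_i = a + k_{\mathbf{m}} d$ for some non-negative integer $k_{\mathbf{m}}$. Thus $\lambda^*_{\alpha}$ multiplies such a monomial by $t^{a/d + k_{\mathbf{m}}}$, and multiplying $\lambda^*_{\alpha}(f)$ by $t^{-w_{\alpha}(f)} = t^{-a/d}$ cancels the fractional power, yielding an element of $R[t]$. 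The only subtlety to flag — and there is no substantial obstacle beyond it — is that we use the basis-independence of $\lambda^*_{\gamma}$ remarked just before the lemma: in (1) we do not need to simultaneously diagonalize $\alpha$ and $\gamma$, so we can work purely in the $\gamma$-eigenbasis.
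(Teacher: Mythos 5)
Your proposal is correct and follows essentially the same route as the paper, which simultaneously diagonalizes $\alpha$ and $\gamma$ for (1) and then leaves (2) and (3) as consequences of (1) and the definitions; your variant of (1) --- staying in a fixed $\gamma$-eigenbasis and using that $\alpha\in C_{\gamma}$ preserves each $\gamma$-eigenspace --- is just the eigenspace-level formulation of the same linear-algebra fact. Your spelled-out weight computations for (2) and (3) (monomial weights $\equiv 0$, resp.\ $\equiv a \pmod d$, with the observation that $\sum_i m_ie_i\ge a$ forces non-negative $t$-exponents after dividing by $t^{a/d}$) correctly fill in the details the paper omits.
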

\begin{proof} 
Let $\delta \in C_{\gamma}$. Then, by replacing the bases $x_1^{(\gamma)}, \ldots, x_N^{(\gamma)}$, we may assume that $\delta$ is also a diagonal matrix with respect to this basis. 
Then, the assertion (1) is obvious. 

(2) follows from (1) (cf.\ \cite{DL02}*{Lemma 2.6} and \cite{NS22}*{Proposition 3.7}). (3) easily follows from the definitions. 
\end{proof}

By Lemma \ref{lem:lambda}(2), $\lambda^* _{\gamma}$ induces a ring homomorphism 
$R^G \to R[t]$. By abuse of notation, we also use the same symbol $\lambda^* _{\gamma}$ for this ring homomorphism. 
We define 
\begin{align*}
I_{\overline{B}^{(\gamma)}} 
&:= \left( \lambda ^* _{\gamma}(f_1)t^{-w_\gamma(f_1)}, \ldots , \lambda ^* _{\gamma}(f_c)t^{-w_\gamma(f_c)} \right)
\subset R[t], \\ 
\overline{B}^{(\gamma)} 
&:= \operatorname{Spec} \left(R[t]/I_{\overline{B}^{(\gamma)}}\right). 
\end{align*}

\begin{lem}\label{lem:incl}
For the ring homomorphism $\lambda^* _{\gamma} : R^G \to R[t]$, the following assertions hold. 
\begin{enumerate}
\item 
$\lambda ^* _{\gamma}(I_B) \subset I_{\overline{B}^{(\gamma)}}$.

\item 
$t I_{\overline{B}^{(\gamma)}} \subset \sqrt{\lambda ^* _{\gamma}(I_B) R[t]}$. 

\item
$I_{\overline{B}^{(\gamma)}}$ is $C_{\gamma}$-invariant.
\end{enumerate}
\end{lem}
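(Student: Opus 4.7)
The plan is to prove parts (1), (2), (3) in order, with (1) being the substantive step and (2), (3) short consequences. The new ingredient compared with \cite{NS22} is that the $g_i \in R$ arising in a relation $h = \sum g_i f_i$ need not be $\gamma$-semi-invariant, so a priori $\lambda^*_\gamma(g_i)$ lives only in $R[t^{1/d}]$; I intend to use a $\gamma$-weight decomposition of the $g_i$ combined with the $\gamma$-invariance of $h$ to extract the admissible components.

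For (1), I take $h \in I_B$ and write $h = \sum_{i=1}^c g_i f_i$ with $g_i \in R$. Working in the diagonalizing basis $x_1^{(\gamma)}, \ldots, x_N^{(\gamma)}$, I decompose $g_i = \sum_{b=0}^{d-1} g_{i,b}$ into $\gamma$-eigencomponents via $g_{i,b} = \frac{1}{d}\sum_{j=0}^{d-1} \xi^{-jb}\gamma^j(g_i)$. Setting $a_i := d\cdot w_\gamma(f_i) \in \{0, \ldots, d-1\}$, the product $g_{i,b} f_i$ is $\gamma$-semi-invariant of weight $b + a_i \bmod d$. Since $h \in R^G$ is $\gamma$-invariant, projecting the identity $h = \sum_{i,b} g_{i,b} f_i$ onto the $\gamma$-weight-$0$ part yields
\[
h = \sum_{i=1}^c g_i'' f_i, \qquad g_i'' := g_{i, (-a_i) \bmod d}.
\]
A direct monomial-exponent check then shows that every monomial of $g_i''$ has $\gamma$-weight $\ge 0$ and congruent to $-a_i$ modulo $d$, so multiplying the $t$-exponents in $\lambda^*_\gamma(g_i'')$ by $t^{w_\gamma(f_i)}$ turns them into non-negative integers; hence $\lambda^*_\gamma(g_i'') \cdot t^{w_\gamma(f_i)} \in R[t]$, and
\[
\lambda^*_\gamma(h) = \sum_{i=1}^c \bigl( \lambda^*_\gamma(g_i'') \cdot t^{w_\gamma(f_i)} \bigr) \cdot \bigl( \lambda^*_\gamma(f_i) \cdot t^{-w_\gamma(f_i)} \bigr) \in I_{\overline{B}^{(\gamma)}}.
\]

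Parts (2) and (3) are then essentially formal. For (2), since each $f_i$ is $G$-semi-invariant with associated character $\chi_i : G \to \mu_d$, the power $f_i^d$ is $G$-invariant and lies in $I_B$. Applying (1) to $f_i^d$ and writing $h_i := \lambda^*_\gamma(f_i) \cdot t^{-w_\gamma(f_i)}$ for the generator of $I_{\overline{B}^{(\gamma)}}$, one computes $(t h_i)^d = t^{d-a_i} \cdot \lambda^*_\gamma(f_i)^d \in \lambda^*_\gamma(I_B) R[t]$, so $t h_i \in \sqrt{\lambda^*_\gamma(I_B) R[t]}$. For (3), for any $\alpha \in C_\gamma$, Lemma \ref{lem:lambda}(1) gives $\alpha \circ \lambda^*_\gamma = \lambda^*_\gamma \circ \alpha$, and the $G$-semi-invariance of $f_i$ yields $\alpha(h_i) = \chi_i(\alpha) \cdot h_i$, so each generator of $I_{\overline{B}^{(\gamma)}}$ is fixed up to scalar.

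The main obstacle is (1), and specifically the verification that the $t$-exponents become integers after the eigencomponent extraction. This is precisely where the asymmetric shifts $t^{-w_\gamma(f_i)}$ built into the definition of $I_{\overline{B}^{(\gamma)}}$ play their role: they are tailored exactly so that the $\gamma$-weight of the admissible $g_i''$ cancels $w_\gamma(f_i)$ integrally modulo $d$. Once this bookkeeping is in place, (2) and (3) follow with minimal additional work.
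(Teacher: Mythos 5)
Your proposal is correct; the difference from the paper lies entirely in part (1). The paper proves (1) by base change: it extends scalars to $R[t^{1/d}]$, where the inclusion $\lambda^*_\gamma(I_B)R[t^{1/d}] \subset I_{\overline{B}^{(\gamma)}}R[t^{1/d}]$ is immediate from $I_B \subset I_{\overline{B}}$ and the commutative diagram, and then contracts back using the identity $\bigl(I_{\overline{B}^{(\gamma)}}R[t^{1/d}]\bigr) \cap R[t] = I_{\overline{B}^{(\gamma)}}$, which holds because $R[t^{1/d}]$ is free over $R[t]$ with basis $1, t^{1/d}, \ldots, t^{(d-1)/d}$. Your eigencomponent argument is the concrete shadow of exactly that splitting: projecting $h = \sum_{i,b} g_{i,b}f_i$ onto the $\gamma$-weight-$0$ part is the same operation as projecting $\lambda^*_\gamma(h) \in R[t^{1/d}]$ onto the $R[t]$-summand, and your congruence check that $(W+a_i)/d \in \mathbb{Z}_{\ge 0}$ is what makes the summand decomposition visible on generators. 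What your version buys is an explicit expression of $\lambda^*_\gamma(h)$ as an $R[t]$-combination of the generators $\lambda^*_\gamma(f_i)t^{-w_\gamma(f_i)}$, at the cost of more bookkeeping; the paper's version is shorter but hides the mechanism in the freeness of $R[t^{1/d]}$ over $R[t]$ — wait, of $R[t^{1/d}]$ over $R[t]$. Your treatments of (2) (raising $t\cdot\lambda^*_\gamma(f_i)t^{-w_\gamma(f_i)}$ to the $d$-th power and using $f_i^d \in I_B$) and of (3) (commutation of $\alpha \in C_\gamma$ with $\lambda^*_\gamma$ plus semi-invariance of the $f_i$) coincide with the paper's.
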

\begin{proof}
Consider the following commutative diagram of rings and their ideals:
\[
\xymatrix{
R^G \ar@{^(->}[r] \ar@{}[d]|{\bigcup} & R \ar@{}[d]|{\bigcup} \ar[rr]^-{x_i^{(\gamma)} \mapsto t^{e_i/d} x_i^{(\gamma)}}&& R[t^{1/d}] \ar@{}[d]|{\bigcup} \\ 
I_B \ar[r] & I_{\overline{B}} \ar[drr] && I_{\overline{B}^{(\gamma)}} R[t^{1/d}] \ar@{}[d]|{\bigcup} \\
&&& \left( \lambda ^* _{\gamma}(f_1), \ldots , \lambda ^* _{\gamma}(f_c) \right)
}
\]
Since the composition $R^G \xrightarrow{\lambda ^* _{\gamma}} R[t] \hookrightarrow R[t^{1/d}]$ is equal to the composition of the two ring homomorphisms in the diagram above, we have
\[
\lambda ^* _{\gamma}(I_B) R[t^{1/d}] \subset I_{\overline{B}^{(\gamma)}} R[t^{1/d}]. 
\]
By taking their restrictions to $R[t]$, we have 
\[
\lambda ^* _{\gamma}(I_B) R[t]
= \left( \lambda ^* _{\gamma}(I_B) R[t^{1/d}] \right) \cap R[t] 
\subset \left(I_{\overline{B}^{(\gamma)}} R[t^{1/d}] \right) \cap R[t]
= I_{\overline{B}^{(\gamma)}}, 
\]
which proves (1). 

Note that we have $f_i^d \in R^G$ for each $1 \le i \le c$ since $f_i$ is $G$-semi-invariant. 
Hence, we have $f_i ^d \in I_B$. 
Since $d \ge d w _{\gamma} (f_i)$ and $f_i ^d \in I _B$, we have 
\[
\bigl( t \cdot \lambda ^* _{\gamma}(f_i)t^{-w_\gamma(f_i)} \bigr) ^d
= t^{d - d w _{\gamma} (f_i)} \lambda ^* _{\gamma}(f_i ^d)
\in \lambda ^* _{\gamma}(I_B) R[t], 
\]
which proves (2). 

(3) follows from Lemma \ref{lem:lambda}(1). 
\end{proof}

By Lemmas \ref{lem:lambda}(2) and \ref{lem:incl}(1), we have the following diagram of rings: 
\[
  \xymatrix{
R^{G}[t] \ar[r] ^-{x_i^{(\gamma)} \mapsto t^{e_i/d} x_i^{(\gamma)}} \ar@{->>}[d] & R^{C_{\gamma}}[t] \ar@{->>}[d] \ar@{^{(}->}[r] & R[t] \ar@{->>}[d] \\
R^G[t]/ \left( I_B R^G[t] \right)  \ar[r] & R^{C_{\gamma}}[t]/ \left( I_{\overline{B} ^{(\gamma)}} \cap R^{C_{\gamma}}[t] \right) \ar[r]   & R[t] / I_{\overline{B} ^{(\gamma)}}
}
\]
Then, we have the following commutative diagram of $k[t]$-schemes. 
\[
  \xymatrix{
A \times \mathbb{A}^1 _k & \widetilde{A}^{(\gamma)} \ar[l]^-{\lambda _{\gamma}} & \overline{A}^{(\gamma)} \ar[l] \ar@/_20pt/[ll]_-{\overline{\lambda} _{\gamma}} \\
B \times \mathbb{A}^1 _k \ar@{^{(}->}[u] & \widetilde{B}^{(\gamma)} \ar@{^{(}->}[u] \ar[l]_-{\mu _{\gamma}} &  \overline{B}^{(\gamma)} \ar[l]_-{p} \ar@{^{(}->}[u] \ar@/^20pt/[ll]^-{\overline{\mu} _{\gamma}}
  }
\]
Here, we defined $k[t]$-schemes $\widetilde{A}^{(\gamma)}$, $\overline{A}^{(\gamma)}$, $\widetilde{B}^{(\gamma)}$ and $\overline{B}^{(\gamma)}$ as follows: 
\begin{alignat*}{2}
\widetilde{A}^{(\gamma)} &:= \operatorname{Spec} \left( R^{C_{\gamma}}[t] \right), &
	\quad \overline{A}^{(\gamma)} &:= \operatorname{Spec} (R[t]) \\
\widetilde{B}^{(\gamma)} &:= \operatorname{Spec} \left( R^{C_{\gamma}}[t]/ \bigl ( I_{\overline{B} ^{(\gamma)}} \cap R^{C_{\gamma}}[t] \bigr ) \right), \quad &
	\overline{B}^{(\gamma)} &:= \operatorname{Spec} \bigl( R[t] / I_{\overline{B} ^{(\gamma)}} \bigr). 
\end{alignat*}
Let $\widetilde{A}^{(\gamma)}_{\infty}$, $\overline{A}^{(\gamma)} _{\infty}$, $\widetilde{B}_{\infty}^{(\gamma)}$ and $\overline{B}_{\infty}^{(\gamma)}$ be their arc spaces as $k[t]$-schemes. 
Let $A_{\infty}$ and $B_{\infty}$ be the arc spaces of $A$ and $B$ as $k$-schemes. 
By Remark \ref{rmk:bc}, $A_{\infty}$ and $B_{\infty}$ can be identified with the arc spaces of $A \times \mathbb{A}^1 _k$ and $B \times \mathbb{A}^1 _k$ as $k[t]$-schemes. 
Then we have the following diagram of arc spaces: 
\[
\xymatrix{
A_{\infty} & \widetilde{A}^{(\gamma)}_{\infty} \ar[l]^-{\lambda _{\gamma \infty}} & \overline{A}^{(\gamma)} _{\infty} \ar[l] \ar@/_20pt/[ll]_{\overline{\lambda} _{\gamma \infty}} \\
B_{\infty} \ar@{^{(}-{>}}[u] & \widetilde{B}_{\infty} ^{(\gamma)} \ar[l]_-{\mu _{\gamma \infty}} \ar@{^{(}-{>}}[u] & \overline{B}_{\infty} ^{(\gamma)} \ar[l]_-{p _{\infty}} \ar@{^{(}-{>}}[u] \ar@/^20pt/[ll]^-{\overline{\mu} _{\gamma \infty}}
}
\]
We identify $B_{\infty}$, $\widetilde{B}_{\infty} ^{(\gamma)}$ and $\overline{B}_{\infty} ^{(\gamma)}$ with the corresponding closed subset of $A_{\infty}$, $\widetilde{A}^{(\gamma)}_{\infty}$ and $\overline{A}^{(\gamma)} _{\infty}$ (cf.\ \cite{NS22}*{Lemma 2.28(1)}). 

\begin{rmk}\label{rmk:seisitu}
\begin{enumerate}
\item 
By Lemma \ref{lem:incl}(3), $\overline{B}^{(\gamma)}$ is a $C_{\gamma}$-invariant subscheme of $\overline{A}^{(\gamma)}$. 
Furthermore, we have $\overline{B}^{(\gamma)} / C_{\gamma} = \widetilde{B}^{(\gamma)}$. 

\item 
Since $I _{\overline{B}^{(\gamma)}}$ is generated by $c$ elements, 
each irreducible component $W_i$ of $\overline{B}^{(\gamma)}$ satisfies $\dim W_i \ge n+1$. 
By (1), the same property also holds for $\widetilde{B}^{(\gamma)}$. 
Therefore, we can apply lemmas and propositions in Subsection \ref{subsection:arc2}. 
In Sections \ref{section:mld} and \ref{section:PIA}, we will consider the codimensions of cylinders in $\overline{B}^{(\gamma)} _{\infty}$ with respect to $n$, and cylinders in $\overline{A}^{(\gamma)} _{\infty}$ with respect to $N$, respectively (cf.\ Remark \ref{rmk:codim}). 

\item 
By the same argument as in \cite{NS22}*{Remark 4.3}, we have a surjective \'{e}tale morphism
\[
\overline{B} \times \left( \mathbb{A}^1 _k \setminus \{ 0 \} \right)
\to \overline{B}^{(\gamma)}_{t \not = 0}
\]
(see Remark \ref{rmk:seisitu2} for more detail). 
Therefore, $\overline{B}^{(\gamma)}_{t \not = 0}$ and $\widetilde{B}^{(\gamma)}_{t \not = 0}$ are integral $k[t]$-schemes of dimension $n+1$. 
In particular, $\overline{B}^{(\gamma)}$ and $\widetilde{B}^{(\gamma)}$ each have only one irreducible component which dominates $\operatorname{Spec} k[t]$. 
By \cite{NS22}*{Remark 2.5}, if $W$ is the dominant component of $\overline{B}^{(\gamma)}$ (resp.\ $\widetilde{B}^{(\gamma)}$), we have $\overline{B}^{(\gamma)} _{\infty} = W_{\infty}$ (resp.\ $\widetilde{B}^{(\gamma)} _{\infty} = W_{\infty}$). 
\end{enumerate}
\end{rmk}

\begin{lem}\label{lem:pull back}
We have
\[
\lambda _{\gamma \infty} ^{-1} (B_{\infty}) = \widetilde{B}_{\infty} ^{(\gamma)}, \qquad 
\overline{\lambda} _{\gamma \infty} ^{-1} (B_{\infty}) = \overline{B}_{\infty} ^{(\gamma)}. 
\] 
\end{lem}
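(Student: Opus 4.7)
The plan is to reduce both equalities to ideal-theoretic identities in $R[t]$ and $R^{C_\gamma}[t]$, then recover the geometric statement by evaluating arcs. The inclusion $\supseteq$ in each identity will be immediate from Lemma \ref{lem:incl}(1): if $\alpha$ is an arc of $\overline{B}^{(\gamma)}$ (resp.\ $\widetilde{B}^{(\gamma)}$), the induced ring homomorphism kills $I_{\overline{B}^{(\gamma)}}$ (resp.\ $I_{\overline{B}^{(\gamma)}} \cap R^{C_\gamma}[t]$), hence kills $\lambda^*_\gamma(I_B)$, so its image under $\overline{\lambda}_{\gamma \infty}$ (resp.\ $\lambda_{\gamma \infty}$) lies in $B_\infty$.

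For the reverse inclusion in the second equality, I plan to exploit Lemma \ref{lem:incl}(2) together with the fact that arcs of $k[t]$-schemes send $t$ to $t$, a non-zero element of $K[[t]]$. Given $\alpha \in \overline{A}^{(\gamma)}_\infty$ with $\alpha^*(\lambda^*_\gamma(I_B)) = 0$ and $h \in I_{\overline{B}^{(\gamma)}}$, Lemma \ref{lem:incl}(2) produces $N \ge 1$ with $(th)^N \in \lambda^*_\gamma(I_B) R[t]$; applying $\alpha^*$ yields $t^N \alpha^*(h)^N = 0$ in the domain $K[[t]]$, forcing $\alpha^*(h) = 0$. For the first equality, the same argument runs provided one first establishes the contraction identity
\[
\lambda^*_\gamma(I_B) R[t] \cap R^{C_\gamma}[t] = \lambda^*_\gamma(I_B) R^{C_\gamma}[t].
\]
This lets one conclude, for $h \in I_{\overline{B}^{(\gamma)}} \cap R^{C_\gamma}[t]$, that the element $(th)^N$ already lies in $\lambda^*_\gamma(I_B) R^{C_\gamma}[t]$, which is all that is needed to run the same evaluation argument for $\alpha \in \widetilde{A}^{(\gamma)}_\infty$.

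The only genuinely delicate step is the contraction identity, which I will deduce from the splitting of $R^{C_\gamma}[t] \hookrightarrow R[t]$ as $R^{C_\gamma}[t]$-modules via the Reynolds operator $\frac{1}{|C_\gamma|} \sum_{g \in C_\gamma} g$; here both the containment $\lambda^*_\gamma(I_B) \subset R^{C_\gamma}[t]$ (Lemma \ref{lem:lambda}(2)) and the characteristic-zero hypothesis (to invert $|C_\gamma|$) are essential. Everything else is a formal consequence of the two parts of Lemma \ref{lem:incl}.
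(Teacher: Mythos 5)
Your proposal is correct and follows essentially the same route as the paper: both directions come from Lemma \ref{lem:incl}, with the reverse inclusion obtained by applying the arc's ring homomorphism to a power of $t h$ for $h$ in the defining ideal and using that $t \mapsto t \ne 0$ in the domain $K[[t]]$. The only (harmless) difference is that for the $\widetilde{B}^{(\gamma)}$ case you prove the un-radicalized contraction $\lambda^*_\gamma(I_B)R[t] \cap R^{C_\gamma}[t] = \lambda^*_\gamma(I_B)R^{C_\gamma}[t]$ via the Reynolds operator, whereas the paper invokes the corresponding identity of radicals $\sqrt{\lambda^*_\gamma(I_B)R[t]} \cap R^{C_\gamma}[t] = \sqrt{\lambda^*_\gamma(I_B)R^{C_\gamma}[t]}$ without further justification; your version actually supplies the missing detail.
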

\begin{proof}
Suppose that $\alpha \in \overline{A}^{(\gamma)}_{\infty}$ satisfies $\overline{\lambda} _{\gamma \infty}(\alpha)\in B_{\infty}$. 
Let $\alpha ^*: R[t] \to k[[t]]$ be the $k[t]$-ring homomorphism corresponding to $\alpha$. 
Since $\overline{\lambda} _{\gamma \infty}(\alpha)\in B_{\infty}$, we have 
\[
\alpha ^* \left( \lambda^* _{\gamma}(I_B) R[t] \right) =0.
\]
By Lemma \ref{lem:incl}(2), we obtain
\[
\alpha ^* \left( tI_{\overline{B} ^{(\gamma)}} \right)
\subset \alpha ^* \left( \sqrt{\lambda^* _{\gamma}(I_B) R[t]} \right)=0.
\]
Since $\alpha ^*$ is a $k[t]$-ring homomorphism, 
we conclude $\alpha ^* \left( I_{\overline{B} ^{(\gamma)}} \right)=0$, which proves 
$\alpha \in \overline{B}_{\infty} ^{(\gamma)}$.
We have proved that $\overline{\lambda} _{\gamma \infty} ^{-1} (B_{\infty}) = \overline{B}_{\infty} ^{(\gamma)}$.

By Lemma \ref{lem:incl}(2), we have 
\begin{align*}
t \left( I_{\overline{B} ^{(\gamma)}} \cap R^{C_{\gamma}}[t] \right)
&\subset t I_{\overline{B}^{(\gamma)}} \cap R^{C_{\gamma}}[t] \\
&\subset \sqrt{ \lambda ^* _{\gamma}(I_B) R[t]} \cap R^{C_{\gamma}}[t]\\
&=\sqrt{\lambda ^* _{\gamma}(I_B) R^{C_{\gamma}}[t]}, 
\end{align*}
which proves $\lambda _{\gamma \infty} ^{-1} (B_{\infty}) = \widetilde{B}_{\infty} ^{(\gamma)}$ by the same argument above. 
\end{proof}

The following proposition is due to Denef and Loeser \cite{DL02}, 
and this allows us to study the arc space $A_{\infty}$ via the arc space $\overline{A}^{(\gamma)}_{\infty}$. 

\begin{prop}[{\cite{DL02}*{Section 2}, cf.\ \cite{NS22}*{Subsections 3.1, 3.2}}]\label{prop:DL_A}
The ring homomorphism $\lambda^* _{\gamma}$ induces the maps 
$\lambda_{\gamma \infty}: \widetilde{A}^{(\gamma)}_{\infty} \to A_{\infty}$ and 
$\overline{\lambda}_{\gamma \infty}: \overline{A}^{(\gamma)} _{\infty} \to A_{\infty}$, 
and the following hold. 
\begin{enumerate}
\item There is a natural inclusion $\overline{A}^{(\gamma)} _{\infty}/C_{\gamma} \hookrightarrow \left( \overline{A}^{(\gamma)} /C_{\gamma} \right)_{\infty} = \widetilde{A}^{(\gamma)}_{\infty}$. 

\item The composite map $\overline{A}^{(\gamma)} _{\infty}/C_{\gamma} \hookrightarrow \widetilde{A}^{(\gamma)}_{\infty}\xrightarrow{\lambda _{\gamma \infty}} A_{\infty}$ is injective outside $Z_{\infty}$. 

\item $\bigsqcup _{\langle \gamma \rangle \in \operatorname{Conj}(G)} 
\left( \overline{\lambda}_{\gamma \infty} \bigl( \overline{A}^{(\gamma)} _{\infty} \bigr) \setminus Z_{\infty} \right)
= A_{\infty} \setminus Z_{\infty}$ holds, where $\operatorname{Conj}(G)$ denotes the set of the conjugacy classes of $G$. 
\end{enumerate}
\end{prop}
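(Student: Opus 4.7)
The plan is to adapt the strategy of Denef--Loeser in \cite{DL02}*{Section 2}, which was also carried out in \cite{NS22}*{Subsections 3.1, 3.2}, to our modified construction of $\overline{A}^{(\gamma)}$ involving the shifts $t^{e_i/d}$ (rather than $t^{e_i}$). The main point is that even though $\lambda^*_\gamma$ a priori lands in $R[t^{1/d}]$, its restriction to $R^G$ lands in $R^{C_\gamma}[t]$ by Lemma \ref{lem:lambda}(2), so the morphisms $\lambda_\gamma$ and $\overline{\lambda}_\gamma$ at the scheme level are genuinely over $\operatorname{Spec} k[t]$; hence the induced arc-space maps make sense.

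For (1), I would observe that $\overline{A}^{(\gamma)} = \operatorname{Spec} R[t]$ is smooth over $k[t]$, that $C_\gamma$ acts on it (the action descends from $G \curvearrowright R$, and indeed commutes with $\lambda^*_\gamma$ by Lemma \ref{lem:lambda}(1)), and that $\widetilde{A}^{(\gamma)} = \overline{A}^{(\gamma)}/C_\gamma$. The natural injection $\overline{A}^{(\gamma)}_\infty/C_\gamma \hookrightarrow (\overline{A}^{(\gamma)}/C_\gamma)_\infty = \widetilde{A}^{(\gamma)}_\infty$ is then a formal consequence of the fact that, for a smooth $X$ with a finite group action, distinct orbits on $X_\infty$ have distinct images in $(X/G)_\infty$ (the map on arc spaces is already set-theoretic quotient by $C_\gamma$).

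For (2) and (3), the key idea I would use is the Galois-theoretic description of arcs in the quotient $A = \overline{A}/G$. Given an arc $\beta \in A_\infty \setminus Z_\infty$, the completion $k[[t]] \hookrightarrow k[[t^{1/d}]]$ together with \'etaleness of $\overline{A} \to A$ outside $Z$ allows one to lift $\beta$ uniquely, up to the $\mathbb{Z}/d$-Galois action on $k[[t^{1/d}]]$ (sending $t^{1/d} \mapsto \xi t^{1/d}$) and up to $G$, to a $k[[t^{1/d}]]$-point $\widetilde{\beta}$ of $\overline{A}$. The monodromy operator of $\widetilde{\beta}$ is an element $\gamma \in G$, well-defined up to conjugation by the ambiguity of lifting. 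After diagonalizing $\gamma$ in the basis $x_i^{(\gamma)}$ with eigenvalues $\xi^{e_i}$, the $\gamma$-equivariance of $\widetilde{\beta}$ forces each coordinate $x_i^{(\gamma)}(\widetilde{\beta})$ to have the form $t^{e_i/d}\cdot (\text{element of } k[[t]])$, which is exactly saying that $\widetilde{\beta}$ is the image under $\overline{\lambda}_{\gamma\infty}$ of a unique arc $\alpha \in \overline{A}^{(\gamma)}_\infty$. For (2), injectivity modulo $C_\gamma$ follows because two arcs $\alpha_1, \alpha_2$ with $\lambda_{\gamma\infty}(\alpha_1) = \lambda_{\gamma\infty}(\alpha_2)$ outside $Z_\infty$ produce the same $\widetilde{\beta}$ up to the $G$-action, and the residual ambiguity among lifts preserving the \emph{given} $\gamma$ is exactly $C_\gamma$. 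For (3), surjectivity onto $A_\infty \setminus Z_\infty$ is exactly the statement that every $\beta$ admits such a lift with some monodromy element $\gamma$.

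The main obstacle I expect is step (3): carefully organizing the lift $\widetilde{\beta}$, tracking how the choice of lift changes $\gamma$ within its conjugacy class, and verifying that after diagonalization the arc has the shape dictated by $\lambda^*_\gamma$. This is the one place where the modified $\lambda^*_\gamma$ (fractional powers $t^{e_i/d}$) intervenes, but since the lift $\widetilde{\beta}$ naturally lives over $k[[t^{1/d}]]$ anyway, the fractional powers are the natural output and no new difficulty arises compared to \cite{NS22}. Once this is done, parts (1) and (2) are essentially formal, and the disjointness in (3) over conjugacy classes follows from the fact that different conjugacy classes give rise to distinct monodromy types.
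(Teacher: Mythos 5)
Your proposal is correct and follows exactly the route the paper takes: the paper gives no proof of this proposition but invokes it directly from Denef--Loeser \cite{DL02}*{Section 2} and \cite{NS22}*{Subsections 3.1, 3.2}, and your argument is precisely that monodromy/twisted-arc construction (lift an arc of $A\setminus Z$ over $k[[t^{1/d}]]$ via \'etaleness, read off the monodromy element $\gamma$ up to conjugacy and up to $C_{\gamma}$ for a fixed $\gamma$, and identify the equivariant lifts with $\overline{A}^{(\gamma)}_{\infty}$ after diagonalizing $\gamma$). Note only that $\overline{A}^{(\gamma)}$ and $\lambda^*_{\gamma}$ themselves are unchanged from \cite{NS22} (the semi-invariance of the $f_i$ only affects $\overline{B}^{(\gamma)}$), so no adaptation is actually needed here.
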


\noindent
By Lemma \ref{lem:pull back}, we can deduce the same statement for $B$. 
\begin{prop}[{cf.\ \cite{NS22}*{Subsection 3.3}}]\label{prop:DL_X}
The ring homomorphism $\lambda^* _{\gamma}$ induces the maps 
$\mu_{\gamma \infty}:\widetilde{B}_{\infty} ^{(\gamma)} \to B_{\infty}$ and 
$\overline{\mu}_{\gamma \infty}: \overline{B}^{(\gamma)}_{\infty} \to B_{\infty}$, 
and the following hold. 
\begin{enumerate}
\item There is a natural inclusion $\overline{B}_{\infty} ^{(\gamma)} / C_{\gamma} \hookrightarrow \widetilde{B}_{\infty} ^{(\gamma)}$. 

\item The composite map $\overline{B}_{\infty} ^{(\gamma)} / C_{\gamma} \hookrightarrow \widetilde{B}_{\infty} ^{(\gamma)} 
\xrightarrow{\mu _{\gamma \infty}} B_{\infty}$ is injective outside $Z_{\infty}$. 

\item $\bigsqcup _{\langle \gamma \rangle \in \operatorname{Conj}(G)} 
\left( \overline{\mu}_{\gamma \infty} \bigl( \overline{B}^{(\gamma)}_{\infty} \bigr) \setminus Z_{\infty} \right)
= B_{\infty} \setminus Z_{\infty}$ holds. 
\end{enumerate}
\end{prop}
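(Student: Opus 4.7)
The plan is to deduce each assertion from the corresponding statement of Proposition~\ref{prop:DL_A} for the ambient scheme $A$, using Lemma~\ref{lem:pull back} as the bridge between the ambient arc spaces and the subscheme arc spaces. The maps $\mu_{\gamma \infty}$ and $\overline{\mu}_{\gamma \infty}$ themselves come for free: Lemma~\ref{lem:pull back} asserts $\lambda_{\gamma \infty}^{-1}(B_\infty) = \widetilde{B}_\infty^{(\gamma)}$ and $\overline{\lambda}_{\gamma \infty}^{-1}(B_\infty) = \overline{B}_\infty^{(\gamma)}$, so I can define $\mu_{\gamma \infty}$ and $\overline{\mu}_{\gamma \infty}$ as the restrictions of $\lambda_{\gamma \infty}$ and $\overline{\lambda}_{\gamma \infty}$ to these closed subsets.

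For (1), I would first observe that by Lemma~\ref{lem:incl}(3) and Remark~\ref{rmk:seisitu}(1), the closed immersion $\overline{B}^{(\gamma)} \hookrightarrow \overline{A}^{(\gamma)}$ is $C_\gamma$-equivariant with quotient $\widetilde{B}^{(\gamma)} \hookrightarrow \widetilde{A}^{(\gamma)}$. Since the arc-space functor commutes with closed immersions and is compatible with the natural map $(-)_\infty / C_\gamma \to ((-)/C_\gamma)_\infty$, the inclusion from Proposition~\ref{prop:DL_A}(1) restricts to the desired inclusion $\overline{B}_\infty^{(\gamma)}/C_\gamma \hookrightarrow \widetilde{B}_\infty^{(\gamma)}$. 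For (2), the composite in question is nothing but the restriction of the composite $\overline{A}_\infty^{(\gamma)}/C_\gamma \hookrightarrow \widetilde{A}_\infty^{(\gamma)} \xrightarrow{\lambda_{\gamma \infty}} A_\infty$ to the closed subset $\overline{B}_\infty^{(\gamma)}/C_\gamma$, so injectivity outside $Z_\infty$ is inherited directly from Proposition~\ref{prop:DL_A}(2).

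For (3), the key identity to exploit is
\[
\overline{\mu}_{\gamma \infty}\bigl( \overline{B}_\infty^{(\gamma)} \bigr) = \overline{\lambda}_{\gamma \infty}\bigl( \overline{A}_\infty^{(\gamma)} \bigr) \cap B_\infty,
\]
which follows from the definition of $\overline{\mu}_{\gamma \infty}$ as a restriction combined with Lemma~\ref{lem:pull back}. Intersecting the disjoint-union identity of Proposition~\ref{prop:DL_A}(3) with $B_\infty$, and noting that $B_\infty \subset A_\infty$ so that disjointness is preserved, I obtain
\[
\bigsqcup_{\langle \gamma \rangle \in \operatorname{Conj}(G)} \left( \overline{\mu}_{\gamma \infty}\bigl( \overline{B}_\infty^{(\gamma)} \bigr) \setminus Z_\infty \right) = B_\infty \cap (A_\infty \setminus Z_\infty) = B_\infty \setminus Z_\infty.
\]

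No serious obstacle is expected, since the proof is essentially bookkeeping once the $A$-version is granted. The main point to verify carefully is that the $C_\gamma$-action on $\overline{B}_\infty^{(\gamma)}$ implicit in (1) and (2) is compatible with the action inherited from $\overline{A}^{(\gamma)}$; this is ensured by the $C_\gamma$-invariance of $I_{\overline{B}^{(\gamma)}}$ established in Lemma~\ref{lem:incl}(3). A minor formal check is also needed to confirm that the inclusion in (1), a priori valued in $\widetilde{A}_\infty^{(\gamma)}$, actually factors through $\widetilde{B}_\infty^{(\gamma)}$, but this is immediate from the functoriality of the quotient-by-$C_\gamma$ construction applied to the equality $\overline{B}^{(\gamma)}/C_\gamma = \widetilde{B}^{(\gamma)}$.
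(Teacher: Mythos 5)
Your proposal is correct and is exactly the argument the paper intends: the paper's own justification is the single sentence that Proposition \ref{prop:DL_X} follows from Proposition \ref{prop:DL_A} via Lemma \ref{lem:pull back}, and your write-up (restricting $\lambda_{\gamma\infty}$ and $\overline{\lambda}_{\gamma\infty}$ using the preimage identities, inheriting injectivity in (2), and intersecting the disjoint-union decomposition with $B_\infty$ via $f(f^{-1}(S))=f(X)\cap S$ for (3)) fills in precisely the bookkeeping that sentence leaves implicit.
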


\section{Order calculations}\label{section:order}
We keep the notation from Section \ref{section:setting}. 
We define 
\[
\operatorname{Jac}_{\overline{B}^{(\gamma)} /k[t]} := 
\operatorname{Fitt}^{n} \left( \Omega _{\overline{B}^{(\gamma)} /k[t]} \right), \quad 
\operatorname{Jac}_{\widetilde{B}^{(\gamma)} /k[t]} := 
\operatorname{Fitt}^{n} \left( \Omega _{\widetilde{B}^{(\gamma)} /k[t]} \right). 
\]
The purpose of this section is to prove Lemmas \ref{lem:L3} and \ref{lem:L5}, 
which correspond to Lemmas 4.6 and 4.7 in \cite{NS22}. 
There, relations among the orders of 
$\operatorname{jac}_{\overline{\mu} _{\gamma}}$, 
$\operatorname{jac}_{p}$, 
$\operatorname{Jac}_{\overline{B}^{(\gamma)}/k[t]}$ and
$\mathfrak{n}_{r,B} \mathcal{O}_{\overline{B}^{(\gamma)}}$ are revealed. 
For this purpose, we will extend the schemes $\overline{B}^{(\gamma)}$ and $\widetilde{B}^{(\gamma)}$ over $k[t^{1/d}]$, 
and we will work on them.

We have the following commutative diagram of rings and ideals: 
\[
\xymatrix{
R^G[t^{1/d}] \ar@{^(->}[r] & R[t^{1/d}] \ar[r]^{x_i^{(\gamma)} \mapsto t^{e_i/d} x_i^{(\gamma)}} & R[t^{1/d}] \\
I_B R^G[t^{1/d}] \ar[r] \ar@{}[u]|{\bigcup} & I_{\overline{B}}R[t^{1/d}] \ar@{}[u]|{\bigcup} \ar[dr] &  I_{\overline{B}^{(\gamma)}} R[t^{1/d}] \ar@{}[u]|{\bigcup} \\
& & \left( \lambda ^* _{\gamma}(f_1), \ldots , \lambda ^* _{\gamma}(f_c) \right) \ar@{}[u]|{\bigcup}
}
\]
By abuse of notation, we denote the ideal $\left( \lambda ^* _{\gamma}(f_1), \ldots , \lambda ^* _{\gamma}(f_c) \right)$ in the diagram by $\lambda ^*_{\gamma} \left( I_{\overline{B}} \right) R[t^{1/d}]$. 
Then, we have the following diagram of rings. 
\[
\xymatrix{
R^G[t^{1/d}] \ar@{^(->}[r] \ar@{->>}[d] & R[t^{1/d}] \ar[r]^{x_i^{(\gamma)} \mapsto t^{e_i/d} x_i^{(\gamma)}} \ar@{->>}[d] & R[t^{1/d}] \ar@{->>}[d] \\
R^G[t^{1/d}]/\left( I_B R^G[t^{1/d}] \right) \ar[r] & R[t^{1/d}]/\left(  I_{\overline{B}}R[t^{1/d}] \right) \ar[r] & R[t^{1/d}]/\left(  \lambda ^*_{\gamma} \left( I_{\overline{B}} \right) R[t^{1/d}]\right) \ar@{->>}[d]\\
&& R[t^{1/d}] \big/ \left(  I_{\overline{B}^{(\gamma)}} R[t^{1/d}] \right)  \\
}
\]
Then, we have the corresponding diagram of $k[t^{1/d}]$-schemes:
\[
  \xymatrix{
A_{k[t^{1/d}]} = A \times \mathbb{A}^1_k & \overline{A}_{k[t^{1/d}]} =  \overline{A} \times \mathbb{A}^1_k \ar[l] & \overline{A}^{(\gamma)}_{1/d} = \mathbb{A}^N _k \times \mathbb{A}^1_k \ar[l] \\
B_{k[t^{1/d}]} = B \times \mathbb{A}^1_k  \ar@{^(->}[u] & \overline{B}_{k[t^{1/d}]} = \overline{B} \times \mathbb{A}^1_k \ar[l] \ar@{^(->}[u] & \widehat{B}^{(\gamma)}_{1/d} \ar[l] \ar@{^(->}[u] \\
&& \overline{B}^{(\gamma)}_{1/d} \ar@{^(->}[u]
  }
\]
Here, we considered the following $k[t]$-schemes:
\begin{alignat*}{2}
A_{k[t^{1/d}]} &:= \operatorname{Spec} \bigl ( R^G[t^{1/d}] \bigr),& \quad
	&\overline{A}_{k[t^{1/d}]} := \overline{A}^{(\gamma)}_{1/d} := \operatorname{Spec} \bigl ( R[t^{1/d}] \bigr), \\
B_{k[t^{1/d}]} &:= \operatorname{Spec} \left( R^G[t^{1/d}] \big/ \bigl ( I_B R^G[t^{1/d}] \bigr) \right),& \quad 
	& \overline{B}_{k[t^{1/d}]}:= \operatorname{Spec} \left( R[t^{1/d}] \big/\bigl(  I_{\overline{B}}R[t^{1/d}] \bigr) \right), \\
\widehat{B}^{(\gamma)}_{1/d} &:= \operatorname{Spec} \left( R[t^{1/d}] \big/ \bigl(  \lambda ^*_{\gamma} \left( I_{\overline{B}} \right) R[t^{1/d}] \bigr) \right),& \quad
& \overline{B}^{(\gamma)}_{1/d}:= \operatorname{Spec} \left( R[t^{1/d}] \big/\bigl(  I_{\overline{B}^{(\gamma)}} R[t^{1/d}] \bigr) \right). 
\end{alignat*}
Then, we have 
\[
\overline{B}^{(\gamma)}_{1/d} = \overline{B}^{(\gamma)} \times _{\operatorname{Spec}k[t]} \operatorname{Spec}k[t^{1/d}]. 
\]
\begin{rmk}\label{rmk:seisitu2}
By definition, the morphisms $\overline{B}^{(\gamma)}_{1/d} \to \widehat{B}^{(\gamma)}_{1/d}$ and $\widehat{B}^{(\gamma)}_{1/d} \to \overline{B}_{k[t^{1/d}]}$ are isomorphic outside $t^{1/d} = 0$. Then, the \'{e}tale morphism $\overline{B} \times \left( \mathbb{A}^1 _k \setminus \{ 0 \} \right) \to \overline{B}^{(\gamma)}_{t \not = 0}$ in Remark \ref{rmk:seisitu}(3) is given by the compositions of the following mophisms: 
\[
  \xymatrix{
  \bigl( \overline{B}_{k[t^{1/d}]} \bigr)_{t^{1/d} \not = 0}  & \bigl( \overline{B}^{(\gamma)}_{1/d} \bigr)_{t^{1/d} \not = 0} \ar[l]_{\simeq} \\
  \overline{B} \times \operatorname{Spec} \left( k[t^{1/d}, t^{-1/d}] \right) \ar@{=}[u] & \overline{B}^{(\gamma)} \times _{\operatorname{Spec}k[t]} \operatorname{Spec}k[t^{1/d}, t^{-1/d}] \ar@{=}[u] \ar[d]^{\text{\'{e}tale}} \\
  \overline{B} \times \left( \mathbb{A}^1 _k \setminus \{ 0 \} \right) \ar@{=}[u] & \overline{B}^{(\gamma)} \times _{\operatorname{Spec}k[t]} \operatorname{Spec}k[t,t^{-1}]  \\
  & \overline{B}^{(\gamma)}_{t \not = 0} \ar@{=}[u]
  }
\]
Furthermore, for any $a, b \in k^{\times}$ with $a = b^d$, it induces an isomorphism 
\[\tag{i}
\overline{B} \simeq \overline{B} \times \{ b \} \xrightarrow{\ \ \simeq \ \ } \overline{B}^{(\gamma)}_{t = a}:= \operatorname{Spec} \left( R[t] / \bigl ( I_{\overline{B} ^{(\gamma)}}+(t-a) \bigr) \right). 
\]
\end{rmk}

\begin{defi}
We define a sheaf $L_{\overline{B}^{(\gamma)}_{1/d}}$ on $\overline{B}^{(\gamma)}_{1/d}$ by
\begin{align*}
& L_{\overline{B}^{(\gamma)}_{1/d}} := t^{w_\gamma({\bf f})} \left( \operatorname{det} ^{-1} \Bigl ( I_{\overline{B}_{k[t^{1/d}]}}/ I^2 _{\overline{B}_{k[t^{1/d}]}} \Bigr ) \right) \big|_{\overline{B}^{(\gamma)}_{1/d}} \otimes _{\mathcal{O}_{\overline{B}^{(\gamma)}_{1/d}}}
\Bigl ( \Omega_{\overline{A}^{(\gamma)}_{1/d}/k[t^{1/d}]} ^N \Bigr ) \big|_{\overline{B}^{(\gamma)}_{1/d}}, \\
&\text{where \quad} I_{\overline{B}_{k[t^{1/d}]}} :=  I_{\overline{B}}R[t^{1/d}], \quad w_\gamma({\bf f}) := w_{\gamma}(f_1) + \cdots + w_{\gamma}(f_c). 
\end{align*}
Here, $\operatorname{det} ^{-1} \Bigl ( I_{\overline{B}_{k[t^{1/d}]}}/ I^2_{\overline{B}_{k[t^{1/d}]}} \Bigr ) = \mathcal{H}om_{\mathcal{O}_{\overline{B}_{k[t^{1/d}]}}} \left( \bigwedge ^{c} \Bigl ( I_{\overline{B}_{k[t^{1/d}]}}/ I^2_{\overline{B}_{k[t^{1/d}]}} \Bigr ), \mathcal{O}_{\overline{B}_{k[t^{1/d}]}} \right)$.
\end{defi}

Recall that $r$ is a positive integer such that $\omega _{B}^{[r]}$ is invertible.

\begin{lem}\label{lem:L}
\begin{enumerate}
\item
There is a natural morphism $\Omega ^n _{\overline{B}^{(\gamma)}_{1/d}/k[t^{1/d}]} \to L_{\overline{B}^{(\gamma)}_{1/d}}$, and we have
\[
\operatorname{Im} \left ( \Omega ^n _{\overline{B}^{(\gamma)}_{1/d}/k[t^{1/d}]} \to L_{\overline{B}^{(\gamma)}_{1/d}} \right)  = \operatorname{Fitt}^n \Bigl ( \Omega _{\overline{B}^{(\gamma)}_{1/d}/k[t^{1/d}]} \Bigr ) \otimes _{\mathcal{O}_{\overline{B}^{(\gamma)}_{1/d}}} L_{\overline{B}^{(\gamma)}_{1/d}}.
\]  

\item
There is a natural morphism $t^{r \cdot w_{\gamma}({\bf f})} \Bigl ( \omega^{[r]} _{B_{k[t^{1/d}]}/k[t^{1/d}]} \big| _{\overline{B}^{(\gamma)}_{1/d}} \Bigr ) \to L_{\overline{B}^{(\gamma)}_{1/d}}^{\otimes r}$, 
and we have
\[
\operatorname{Im} \left (t^{r \cdot w_{\gamma}({\bf f})} \Bigl ( \omega^{[r]} _{B_{k[t^{1/d}]}/k[t^{1/d}]} \big| _{\overline{B}^{(\gamma)}_{1/d}} \Bigr ) \to L_{\overline{B}^{(\gamma)}_{1/d}}^{\otimes r} \right)
= t^{r \cdot \operatorname{age}(\gamma)} L_{\overline{B}^{(\gamma)}_{1/d}}^{\otimes r}. 
\]
\end{enumerate}
\end{lem}

\begin{proof}
We define an invertible sheaf $L_{\widehat{B}^{(\gamma)}_{1/d}}$ on $\widehat{B}^{(\gamma)}_{1/d}$ by
\[
L_{\widehat{B}^{(\gamma)}_{1/d}} := \left( \operatorname{det} ^{-1} \Bigl ( I_{\overline{B}_{k[t^{1/d}]}}/ I^2 _{\overline{B}_{k[t^{1/d}]}} \Bigr) \right) \big|_{\widehat{B}^{(\gamma)}_{1/d}} \otimes _{\mathcal{O}_{\widehat{B}^{(\gamma)}_{1/d}}}
\left( \Omega_{\overline{A}^{(\gamma)}_{1/d}/k[t^{1/d}]} ^N \right) \big|_{\widehat{B}^{(\gamma)}_{1/d}}. 
\]
Then, by the definitions of $L_{\widehat{B}^{(\gamma)}_{1/d}}$ and $L_{\overline{B}^{(\gamma)}_{1/d}}$, we have a natural injective morphism $L_{\overline{B}^{(\gamma)}_{1/d}} \hookrightarrow L_{\widehat{B}^{(\gamma)}_{1/d}} |_{\overline{B}^{(\gamma)}_{1/d}}$, 
and we have
\[
\tag{i}
\operatorname{Im} \left ( L_{\overline{B}^{(\gamma)}_{1/d}} \hookrightarrow L_{\widehat{B}^{(\gamma)}_{1/d}} |_{\overline{B}^{(\gamma)}_{1/d}} \right) = t^{w_\gamma({\bf f})} \left( L_{\widehat{B}^{(\gamma)}_{1/d}} |_{\overline{B}^{(\gamma)}_{1/d}} \right).
\]

Since $\overline{B} \to B$ is \'{e}tale in codimension one by Lemma \ref{lem:etaleness of B}(1), we have 
\begin{align*}
& \omega^{[r]} _{B_{k[t^{1/d}]}/k[t^{1/d}]} \big| _{\overline{B}_{k[t^{1/d}]}} \\
& \simeq \omega^{\otimes r} _{\overline{B}_{k[t^{1/d}]}/k[t^{1/d}]} \\
& \simeq \left( \operatorname{det} ^{-1} \Bigl( I_{\overline{B}_{k[t^{1/d}]}}/ I^2_{\overline{B}_{k[t^{1/d}]}} \Bigr) \right)^{\otimes r} \otimes _{\mathcal{O}_{\overline{B}_{k[t^{1/d}]}}} \Bigl ( \Omega_{\overline{A}_{k[t^{1/d}]}/k[t^{1/d}]} ^N \Bigr )^{\otimes r} \big|_{\overline{B}_{k[t^{1/d}]}}. 
\end{align*}
Furthermore, we have a natural morphism $\Omega_{\overline{A}_{k[t^{1/d}]}/k[t^{1/d}]} ^N \big|_{\overline{A}^{(\gamma)}_{1/d}} \to \Omega_{\overline{A}^{(\gamma)}_{1/d}/k[t^{1/d}]} ^N$, and its image is equal to $t^{\operatorname{age}(\gamma)} \left( \Omega_{\overline{A}^{(\gamma)}_{1/d}/k[t^{1/d}]} ^N \right)$. 
Hence, we have a natural morphism $\omega^{[r]} _{B_{k[t^{1/d}]}/k[t^{1/d}]} \big| _{\widehat{B}^{(\gamma)}_{1/d}} \to L_{\widehat{B}^{(\gamma)}_{1/d}}^{\otimes r}$, and its image is equal to
\[
\tag{ii}
\operatorname{Im} \left ( \omega^{[r]} _{B_{k[t^{1/d}]}/k[t^{1/d}]} \big| _{\widehat{B}^{(\gamma)}_{1/d}} \to L_{\widehat{B}^{(\gamma)}_{1/d}}^{\otimes r} \right) =
t^{r \cdot \operatorname{age}(\gamma)} L_{\widehat{B}^{(\gamma)}_{1/d}} ^{\otimes r}. 
\]
Then, the assertions in (2) follow from (i) and (ii). 

We shall prove (1) following the argument in \cite{NS22}*{Lemma 4.5}. 
It is sufficient to show that we have a natural morphism 
\[
\left( \bigwedge ^{c} \left( I_{\overline{B}_{k[t^{1/d}]}}/ I^2_{\overline{B}_{k[t^{1/d}]}} \right) \right) \big|_{\overline{B}^{(\gamma)}_{1/d}} \otimes _{\mathcal{O}_{\overline{B}^{(\gamma)}_{1/d}}} \Omega ^n _{\overline{B}^{(\gamma)}_{1/d}/k[t^{1/d}]} \to \left( \Omega_{\overline{A}^{(\gamma)}_{1/d}/k[t^{1/d}]} ^N \right) \big|_{\overline{B}^{(\gamma)}_{1/d}}, 
\]
and its image is equal to 
\[
t^{w_\gamma({\bf f})} \cdot \operatorname{Fitt}^n \left( \Omega _{\overline{B}^{(\gamma)}_{1/d}/k[t^{1/d}]} \right) \otimes \left( \Omega_{\overline{A}^{(\gamma)}_{1/d}/k[t^{1/d}]} ^N \right) \big|_{\overline{B}^{(\gamma)}_{1/d}}. 
\]
We set 
\begin{align*}
S_{\overline{B}_{k[t^{1/d}]}} &:= R[t^{1/d}] \big/ \left(  I_{\overline{B}}R[t^{1/d}] \right), \\
S_{\overline{B}^{(\gamma)}_{1/d}} &:= R[t^{1/d}] \big/ \left(  I_{\overline{B}^{(\gamma)}} R[t^{1/d}] \right), \\ 
I_{\overline{B}^{(\gamma)}_{1/d}} &:=  I_{\overline{B}^{(\gamma)}} R[t^{1/d}]. 
\end{align*}
Let $M \subset \Omega _{R[t^{1/d}]/k[t^{1/d}]}$ be the $R[t^{1/d}]$-submodule generated by $df$ for $f \in I_{\overline{B}^{(\gamma)}_{1/d}}$. 
Then, we have 
\[
\left( \Omega _{R[t^{1/d}]/k[t^{1/d}]} \big/M \right) \otimes _{R[t^{1/d}]} S_{\overline{B}^{(\gamma)}_{1/d}} \simeq  \Omega _{S_{\overline{B}^{(\gamma)}_{1/d}}/k[t^{1/d}]}.
\]

Since $I_{\overline{B}^{(\gamma)}_{1/d}}$ is generated by $c$ elements, we have $\bigwedge ^{c+1} M = 0$. 
Therefore, a natural map $\Omega _{R[t^{1/d}]/k[t^{1/d}]} ^{c} \otimes \Omega _{R[t^{1/d}]/k[t^{1/d}]} ^{N-c} \to \Omega _{R[t^{1/d}]/k[t^{1/d}]} ^N$ induces a map 
\[
\bigwedge ^{c} M \otimes _{R[t^{1/d}]} \bigwedge ^{N-c} \left( \Omega _{R[t^{1/d}]/k[t^{1/d}]} \big/M \right) \to \Omega _{R[t^{1/d}]/k[t^{1/d}]} ^N. 
\] 
Furthermore, we have natural maps
\[
\left( I_{\overline{B}_{k[t^{1/d}]}}/ I^2_{\overline{B}_{k[t^{1/d}]}} \right) \otimes _{S_{\overline{B}_{k[t^{1/d}]}}} S_{\overline{B}^{(\gamma)}_{1/d}} \to I_{\overline{B}^{(\gamma)}_{1/d}}/ I^2 _{\overline{B}^{(\gamma)}_{1/d}} \xrightarrow{\overline{h} \mapsto d(h) \otimes 1} M \otimes _{R[t^{1/d}]} S_{\overline{B}^{(\gamma)}_{1/d}}. 
\]
Therefore, we have a natural map
\begin{align*}
\tag{iii}
\left( \bigwedge ^{c} \left( I_{\overline{B}_{k[t^{1/d}]}}/ I^2_{\overline{B}_{k[t^{1/d}]}} \right)  \otimes _{S_{\overline{B}_{k[t^{1/d}]}}} S_{\overline{B}^{(\gamma)}_{1/d}} \right) & \otimes _{S_{\overline{B}^{(\gamma)}_{1/d}}} \Omega _{S_{\overline{B}^{(\gamma)}_{1/d}}/k[t^{1/d}]} ^{N-c} \\
 \longrightarrow & \ \Omega _{R[t^{1/d}]/k[t^{1/d}]} ^N \otimes _{R[t^{1/d}]} S_{\overline{B}^{(\gamma)}_{1/d}}. 
\end{align*}

The $S_{\overline{B}^{(\gamma)}_{1/d}}$-module 
\[
\left( \bigwedge ^{c} \left( I_{\overline{B}_{k[t^{1/d}]}}/ I^2_{\overline{B}_{k[t^{1/d}]}} \right) \otimes _{S_{\overline{B}_{k[t^{1/d}]}}} S_{\overline{B}^{(\gamma)}_{1/d}} \right) \otimes _{S_{\overline{B}^{(\gamma)}_{1/d}}} \Omega _{S_{\overline{B}^{(\gamma)}_{1/d}}/k[t^{1/d}]} ^{N-c}
\]
is generated by the elements of the form 
\[
\left( \overline{f_1} \wedge \cdots \wedge \overline{f_c} \right) \otimes \left( d \overline{x}_{i _1} \wedge \cdots \wedge d \overline{x}_{i_{N-c}} \right). 
\]
Its image by the map (iii) is
\begin{align*}
&d \left( \lambda _{\gamma} ^* (f_1) \right) \wedge \cdots \wedge d \left( \lambda _{\gamma} ^* (f_c) \right) \wedge  d x_{i _1} \wedge \cdots \wedge d x_{i_{N-c}} \\
&=t^{w_{\gamma}({\bf f})} \cdot d \left( \lambda _{\gamma} ^* (f_1) t^{- w_{\gamma} (f_1)} \right) \wedge \cdots \wedge d \left( \lambda _{\gamma} ^* (f_c) t^{- w_{\gamma} (f_c)} \right) \wedge d x_{i _1} \wedge \cdots \wedge dx_{i_{N-c}}. 
\end{align*}
This is equal to 
\[
\pm t^{w_{\gamma}({\bf f})} \cdot \Delta \cdot dx_1 \wedge \cdots \wedge dx_N, 
\]
where $\Delta$ is the determinant of the Jacobian matrix with respect to $\lambda _{\gamma} ^* (f_i) t^{- w_{\gamma} (f_i)}$ for $1 \le i \le c$ and $\partial x_j$ for $j \in \{ 1, \ldots ,N \} \setminus \{ i_1, \ldots, i_{N-c} \}$. 
Therefore, we conclude that the image of the map (iii) is equal to 
\[
t^{w_\gamma({\bf f})} \cdot \operatorname{Fitt}^n \Bigl ( \Omega _{\overline{B}^{(\gamma)}_{1/d}/k[t^{1/d}]} \Bigr) \Bigl ( \Omega _{R[t^{1/d}]/k[t^{1/d}]} ^N \otimes _{R[t^{1/d}]} S_{\overline{B}^{(\gamma)}_{1/d}} \Bigr), 
\]
which completes the proof of (1). 
\end{proof}

\begin{defi}
By composing $\Omega^n _{B_{k[t^{1/d}]}/k[t^{1/d}]} \big|_{\overline{B}^{(\gamma)}_{1/d}} \to \Omega ^n _{\overline{B}^{(\gamma)}_{1/d}/k[t^{1/d}]}$ and $\Omega ^n _{\overline{B}^{(\gamma)}_{1/d}/k[t^{1/d}]} \to L_{\overline{B}^{(\gamma)}_{1/d}}$ in Lemma \ref{lem:L}(1), 
we obtain a natural morphism $\Omega^n _{B_{k[t^{1/d}]}/k[t^{1/d}]} \big|_{\overline{B}^{(\gamma)}_{1/d}} \to L_{\overline{B}^{(\gamma)}_{1/d}}$. 
Then since $L_{\overline{B}^{(\gamma)}_{1/d}}$ is generated by one global section, we define an ideal sheaf 
$\mathfrak{n}'_{\overline{B}^{(\gamma)}_{1/d}} \subset \mathcal{O}_{\overline{B}^{(\gamma)}_{1/d}}$ by
\[
\operatorname{Im} \left( 
\Omega^n _{B_{k[t^{1/d}]}/k[t^{1/d}]} \big|_{\overline{B}^{(\gamma)}_{1/d}} \to L_{\overline{B}^{(\gamma)}_{1/d}}  
\right) = \mathfrak{n}'_{\overline{B}^{(\gamma)}_{1/d}} \otimes _{\mathcal{O}_{\overline{B}^{(\gamma)}_{1/d}}} L_{\overline{B}^{(\gamma)}_{1/d}}. 
\]
\end{defi}

\begin{lem}\label{lem:L2}
We have the following equality of ideal sheaves on $\overline{B}^{(\gamma)}_{1/d}$: 
\[
t^{r \cdot \operatorname{age}(\gamma)} \Bigl( \mathfrak{n}_{r,B} \mathcal{O}_{\overline{B}^{(\gamma)}_{1/d}} \Bigr) = t^{r \cdot w_{\gamma}({\bf f})} \Bigl( \mathfrak{n}'_{\overline{B}^{(\gamma)}_{1/d}} \Bigr)^r. 
\]
\end{lem}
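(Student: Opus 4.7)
The plan is to realize both sides of the claimed identity as the image in $L_3^{\otimes r}|_{B_4}$ of a common natural morphism from $\bigl( \Omega^n_{B_1/k[t^{1/d}]}|_{B_4} \bigr)^{\otimes r}$, computed along two different factorizations. I would form the diagram
\[
\xymatrix{
\bigl( \Omega^n_{B_1/k[t^{1/d}]}|_{B_4} \bigr)^{\otimes r} \ar[r]^-{\phi'^{\otimes r}} \ar[d]_-{\overline{\eta}_r} & L_4^{\otimes r} \ar@{^{(}->}[d]^-{\iota^{\otimes r}} \\
\omega^{[r]}_{B_1/k[t^{1/d}]}|_{B_4} \ar[r]^-{\sigma} & L_3^{\otimes r}|_{B_4}
}
\]
where $\phi'^{\otimes r}$ is the $r$-th tensor power of the morphism defining $\mathfrak{n}'_{1,4}$ (so its image is $(\mathfrak{n}'_{1,4})^r \otimes L_4^{\otimes r}$); $\overline{\eta}_r$ is the restriction to $B_4$ of the pullback of the Nash morphism from Definition \ref{defi:nash} (with image $(\mathfrak{n}_{r,B}\mathcal{O}_{B_4}) \otimes \omega^{[r]}_{B_1/k[t^{1/d}]}|_{B_4}$); $\iota$ is the inclusion $L_4 \hookrightarrow L_3|_{B_4}$ coming from equation (i) in the proof of Lemma \ref{lem:L} (with image $t^{w_\gamma({\bf f})}(L_3|_{B_4})$); and $\sigma$ is the restriction to $B_4$ of the natural morphism $\omega^{[r]}_{B_1/k[t^{1/d}]}|_{B_3} \to L_3^{\otimes r}$ from equation (ii) in the proof of Lemma \ref{lem:L} (with image $t^{r\operatorname{age}(\gamma)} L_3^{\otimes r}$).

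Granting the commutativity of this diagram, I would compute the image of the composite morphism $\bigl( \Omega^n_{B_1/k[t^{1/d}]}|_{B_4} \bigr)^{\otimes r} \to L_3^{\otimes r}|_{B_4}$ along each path. Along the top–right path it equals $t^{rw_\gamma({\bf f})}(\mathfrak{n}'_{1,4})^r \cdot L_3^{\otimes r}|_{B_4}$, while along the down–right path it equals $t^{r\operatorname{age}(\gamma)}(\mathfrak{n}_{r,B}\mathcal{O}_{B_4}) \cdot L_3^{\otimes r}|_{B_4}$. The sheaf $L_3^{\otimes r}|_{B_4}$ is invertible (since $f_1,\ldots,f_c$ is a regular sequence, $\det^{-1}(I_2/I_2^2)$ is invertible on $B_2$, and $\Omega^N_{A_3/k[t^{1/d}]}$ is invertible on $A_3$), so the two ideal-sheaf coefficients must coincide, yielding the desired equality $t^{r\operatorname{age}(\gamma)}(\mathfrak{n}_{r,B}\mathcal{O}_{B_4}) = t^{rw_\gamma({\bf f})}(\mathfrak{n}'_{1,4})^r$.

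The main obstacle is the verification of commutativity. I would reduce it to an identification on $B_2 = \overline{B} \times \operatorname{Spec} k[t^{1/d}]$, where by Lemma \ref{lem:etaleness of B}(1) and the regular sequence hypothesis the classical adjunction isomorphism
\[
\omega^{[r]}_{B_1/k[t^{1/d}]}|_{B_2} \simeq \omega_{B_2/k[t^{1/d}]}^{\otimes r} \simeq \det{}^{-r}(I_2/I_2^2) \otimes \bigl( \Omega^N_{A_2/k[t^{1/d}]}|_{B_2} \bigr)^{\otimes r} =: L_2^{\otimes r}
\]
holds, and under this identification the Nash morphism $\overline{\eta}_r$ corresponds, by construction, to the natural K\"ahler morphism $(\phi'_2)^{\otimes r}$. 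Pulling back along $\lambda^*_\gamma \colon A_3 \to A_2$ introduces the factor $t^{r\operatorname{age}(\gamma)}$ (from the natural map $\Omega^N_{A_2/k[t^{1/d}]}|_{A_3} \to \Omega^N_{A_3/k[t^{1/d}]}$, whose image is $t^{\operatorname{age}(\gamma)} \Omega^N_{A_3/k[t^{1/d}]}$), and restricting to $B_4$ together with the inclusion $L_4 \hookrightarrow L_3|_{B_4}$ accounts for the factor $t^{rw_\gamma({\bf f})}$. Tracking these two twists carefully produces the desired commutativity and completes the proof.
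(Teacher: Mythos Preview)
Your proposal is correct and follows essentially the same approach as the paper. The paper computes the image of $t^{r w_\gamma({\bf f})}\bigl(\Omega^n_{B_1/k[t^{1/d}]}|_{B_4}\bigr)^{\otimes r}$ inside $L_4^{\otimes r}$ along two factorizations (direct, and via $\omega^{[r]}_{B_1/k[t^{1/d}]}|_{B_4}$ using Lemma~\ref{lem:L}(2)); you instead push everything into the genuinely invertible sheaf $L_3^{\otimes r}|_{B_4}$ via the inclusion $\iota$ and the map $\sigma$, which are exactly the ingredients (i) and (ii) from which Lemma~\ref{lem:L}(2) was deduced. The two arguments are the same up to this repackaging, with your version having the mild advantage that invertibility of $L_3^{\otimes r}|_{B_4}$ makes the final cancellation step transparent, and that you make explicit the commutativity check the paper leaves implicit.
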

\begin{proof}
By the definition of $\mathfrak{n}'_{\overline{B}^{(\gamma)}_{1/d}}$, we have 
\[
\operatorname{Im} \left( 
t^{r \cdot w_{\gamma}({\bf f})} \Bigl (\Omega^n _{B_{k[t^{1/d}]}/k[t^{1/d}]} \Bigr)^{\otimes r} \big|_{\overline{B}^{(\gamma)}_{1/d}} \to L_{\overline{B}^{(\gamma)}_{1/d}} ^{\otimes r}
\right) = t^{r \cdot w_{\gamma}({\bf f})} \Bigl( \mathfrak{n}'_{\overline{B}^{(\gamma)}_{1/d}} \Bigr)^r \otimes L_{\overline{B}^{(\gamma)}_{1/d}} ^{\otimes r}. 
\]
On the other hand, by the definition of $\mathfrak{n}_{r,B}$ and Remark \ref{rmk:nash}, we have 
\begin{align*}
\operatorname{Im} & \left( 
t^{r \cdot w_{\gamma}({\bf f})} \Bigl (\Omega^n _{B_{k[t^{1/d}]}/k[t^{1/d}]} \Bigr)^{\otimes r} \big|_{\overline{B}^{(\gamma)}_{1/d}} 
	\to \omega _{B_{k[t^{1/d}]}/k[t^{1/d}]} ^{[r]} \big|_{\overline{B}^{(\gamma)}_{1/d}}
\right) \\
&= t^{r \cdot w_{\gamma}({\bf f})} \Bigl( \mathfrak{n}_{r,B} \mathcal{O}_{\overline{B}^{(\gamma)}_{1/d}} \Bigr) \otimes \omega _{B_{k[t^{1/d}]}/k[t^{1/d}]} ^{[r]} \big|_{\overline{B}^{(\gamma)}_{1/d}}. 
\end{align*}
Furthermore, by Lemma \ref{lem:L}(2), we have 
\[
\operatorname{Im} \left(
t^{r \cdot w_{\gamma}({\bf f})}  \Bigl( \omega _{B_{k[t^{1/d}]}/k[t^{1/d}]} ^{[r]} \big|_{\overline{B}^{(\gamma)}_{1/d}} \Bigr)
\to L_{\overline{B}^{(\gamma)}_{1/d}} ^{\otimes r}
\right) 
= t^{r \cdot \operatorname{age}(\gamma)} L_{\overline{B}^{(\gamma)}_{1/d}} ^{\otimes r}. 
\]
Therefore, we have 
\begin{align*}
\operatorname{Im} & \left( 
t^{r \cdot w_{\gamma}({\bf f})} \Bigl ( \Omega^n _{B_{k[t^{1/d}]}/k[t^{1/d}]} \Bigr) ^{\otimes r} \big|_{\overline{B}^{(\gamma)}_{1/d}} \to L_{\overline{B}^{(\gamma)}_{1/d}} ^{\otimes r} \right) \\ 
& = t^{r \cdot \operatorname{age}(\gamma)} \Bigl( \mathfrak{n}_{r,B} \mathcal{O}_{\overline{B}^{(\gamma)}_{1/d}} \Bigr) \otimes L_{\overline{B}^{(\gamma)}_{1/d}} ^{\otimes r}, 
\end{align*}
which proves the desired equality. 
\end{proof}

\begin{lem}[cf.\ \cite{NS22}*{Lemmas 4.6(2) and 4.7}]\label{lem:L3}
Let $\alpha \in \overline{B}^{(\gamma)} _{\infty}$ be an arc with 
$\operatorname{ord}_{\alpha} \left( \operatorname{Jac} _{\overline{B}^{(\gamma)}/k[t]} \right) < \infty$. 
Then we have
\[\operatorname{ord}_{\alpha} \left( \operatorname{jac}_{\overline{\mu} _{\gamma}} \right) 
+ \operatorname{ord}_{\alpha} \left( \operatorname{Jac} _{\overline{B}^{(\gamma)}/k[t]} \right)
= \frac{1}{r} \operatorname{ord}_{\alpha} \left( \mathfrak{n}_{r,B} \mathcal{O}_{\overline{B}^{(\gamma)}} \right) 
+ \operatorname{age}(\gamma)-w_\gamma({\bf f}). 
\]
\end{lem}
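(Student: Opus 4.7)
The strategy is to pass to the base change $B_4 = \overline{B}^{(\gamma)}\times_{\operatorname{Spec} k[t]}\operatorname{Spec} k[t^{1/d}]$, where the sheaf $L_4$, the ideal $\mathfrak n'_{1,4}$, and Lemmas \ref{lem:L} and \ref{lem:L2} have been prepared, and then to descend the resulting identity to $\overline{B}^{(\gamma)}$. Since $B_4 \to \overline{B}^{(\gamma)}$ is flat, the arc $\alpha$ lifts uniquely to an arc $\beta \in (B_4)_\infty$ (with $B_4$ viewed as a $k[t^{1/d}]$-scheme) sending $t^{1/d}$ to $t^{1/d}$. Because $t=(t^{1/d})^d$, for any ideal sheaf $J$ pulled back from $\overline{B}^{(\gamma)}$ we have $\operatorname{ord}_\beta(J\mathcal O_{B_4}) = d\cdot\operatorname{ord}_\alpha(J)$; denoting by $\nu \colon B_4 \to B_1$ the base change of $\overline\mu_\gamma$ to $k[t^{1/d}]$, the same reasoning gives $\operatorname{ord}_\beta(\operatorname{jac}_\nu) = d\cdot\operatorname{ord}_\alpha(\operatorname{jac}_{\overline\mu_\gamma})$ and $\operatorname{ord}_\beta(\operatorname{Jac}_{B_4/k[t^{1/d}]}) = d\cdot\operatorname{ord}_\alpha(\operatorname{Jac}_{\overline B^{(\gamma)}/k[t]})$, together with $\operatorname{ord}_\beta(t^a) = da$ for $a \in \tfrac{1}{d}\mathbb Z_{\ge 0}$.

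The core step is to compute $\operatorname{ord}_\beta(\mathfrak n'_{1,4})$ in two independent ways. The first way is a direct consequence of Lemma \ref{lem:L2}: applying $\operatorname{ord}_\beta$ to
\[
t^{r\operatorname{age}(\gamma)}\bigl(\mathfrak n_{r,B}\mathcal O_{B_4}\bigr) = t^{rw_\gamma({\bf f})}\bigl(\mathfrak n'_{1,4}\bigr)^r
\]
and dividing by $r$ yields
\[
\operatorname{ord}_\beta(\mathfrak n'_{1,4}) = \tfrac{1}{r}\operatorname{ord}_\beta\bigl(\mathfrak n_{r,B}\mathcal O_{B_4}\bigr) + d\bigl(\operatorname{age}(\gamma) - w_\gamma({\bf f})\bigr).
\]
The second way factors the defining morphism of $\mathfrak n'_{1,4}$ as
\[
\Omega^n_{B_1/k[t^{1/d}]}\big|_{B_4} \longrightarrow \Omega^n_{B_4/k[t^{1/d}]} \longrightarrow L_4,
\]
in which the second arrow has image $\operatorname{Jac}_{B_4/k[t^{1/d}]}\otimes L_4$ by Lemma \ref{lem:L}(1), and the cokernel of the first arrow modulo torsion has length $\operatorname{ord}_\beta(\operatorname{jac}_\nu)$ at $\beta$ by the very definition of $\operatorname{jac}$. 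Using the finiteness hypothesis $\operatorname{ord}_\alpha(\operatorname{Jac}_{\overline B^{(\gamma)}/k[t]}) < \infty$, one works locally near $\beta$ where the torsion-free quotient of $\Omega^n_{B_4/k[t^{1/d}]}$ is a line bundle embedding into $L_4$, reducing the image computation to order arithmetic for ideals in the discrete valuation ring $k[[t^{1/d}]]$, in the spirit of Lemma \ref{lem:additive}. This produces
\[
\operatorname{ord}_\beta(\mathfrak n'_{1,4}) = \operatorname{ord}_\beta(\operatorname{jac}_\nu) + \operatorname{ord}_\beta(\operatorname{Jac}_{B_4/k[t^{1/d}]}).
\]

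Equating the two expressions for $\operatorname{ord}_\beta(\mathfrak n'_{1,4})$, substituting the scalings from the first paragraph, and dividing through by $d$ yields the desired identity. The main obstacle lies in carrying out the second computation rigorously: one must verify that at the arc $\beta$, where the Jacobian has finite order, the torsion of $\Omega^n_{B_4/k[t^{1/d}]}$ can be discarded and the induced map on torsion-free quotients gives an injection of line bundles into $L_4$ with image precisely $\operatorname{Jac}_{B_4/k[t^{1/d}]}\cdot L_4$, so that the three orders add up cleanly.
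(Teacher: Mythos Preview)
Your argument is correct and follows essentially the same route as the paper: lift $\alpha$ to an arc $\beta$ on $B_4$, compute $\operatorname{ord}_\beta(\mathfrak n'_{1,4})$ once via Lemma~\ref{lem:L2} and once via the factorization through $\Omega^n_{B_4/k[t^{1/d}]}\to L_4$ from Lemma~\ref{lem:L}(1), and equate. The only cosmetic difference is your normalization $\operatorname{ord}_\beta(t^{1/d})=1$ (so you divide by $d$ at the end) versus the paper's $\operatorname{ord}_\beta(t^{1/d})=1/d$; the step you flag as the ``main obstacle'' is exactly what the paper handles by invoking the argument of \cite{NS22}*{Lemma 4.6}.
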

\begin{proof}
Note that $\overline{B}^{(\gamma)} \times _{\operatorname{Spec} k[t]} \operatorname{Spec} k[t^{1/d}] = \overline{B}^{(\gamma)}_{1/d}$. 
Let $\beta:\operatorname{Spec} k[[t^{1/d}]]\rightarrow \overline{B}_{1/d}^{(\gamma)}$ be the unique lift of $\alpha: \operatorname{Spec} k[[t]]\rightarrow\overline{B}^{(\gamma)}$ induced by the base change. 
For an ideal sheaf $\mathfrak{a} \subset \mathcal{O}_{\overline{B}^{(\gamma)}_{1/d}}$, we define $\operatorname{ord} _{\beta} (\mathfrak{a})$ as follows: 
For the corresponding ring homomorphism $\beta ^*: \mathcal{O}_{\overline{B}^{(\gamma)}_{1/d}} \to k[[t^{1/d}]]$, 
we have $\beta ^* (\mathfrak{a})k[[t^{1/d}]] = \bigl( t^{\frac{\ell}{d}} \bigr)$ for some $\ell \in \mathbb{Z}_{\ge 0}$. 
Then, we define $\operatorname{ord} _{\beta} (\mathfrak{a}) := \frac{\ell}{d}$. 

Since $\operatorname{Fitt}^n \Bigl( \Omega _{\overline{B}^{(\gamma)}_{1/d}/k[t^{1/d}]} \Bigr) = \operatorname{Jac} _{\overline{B}^{(\gamma)}/k[t]} \mathcal{O}_{\overline{B}^{(\gamma)}_{1/d}}$, we have 
\[
\operatorname{ord}_{\beta} \left( \operatorname{Fitt}^n \Bigl( \Omega _{\overline{B}^{(\gamma)}_{1/d}/k[t^{1/d}]} \Bigr) \right)
= \operatorname{ord}_{\alpha} \left( \operatorname{Jac} _{\overline{B}^{(\gamma)}/k[t]} \right) < \infty. 
\]
Therefore, 
$\overline{B}^{(\gamma)}_{1/d}$ is smooth over $k[t^{1/d}]$ at the point $\beta (\eta)$, where $\eta$ denotes the generic point of  $\operatorname{Spec}(k[[t^{1/d}]])$. 
Hence we have an isomorphism of $k[[t^{1/d}]]$-modules:
\[
\beta ^* \Omega _{\overline{B}^{(\gamma)}_{1/d}/k[t^{1/d}]} \big/ \Bigl( \beta ^* \Omega _{\overline{B}^{(\gamma)}_{1/d}/k[t^{1/d}]} \Bigr)_{\rm tor} \simeq k[[t^{1/d}]]^{\oplus n}. \tag{i}
\]

Let $a$ be the length of the $k[[t^{1/d}]]$-module
\[
\operatorname{Coker} \left( \beta ^* \Bigl( \Omega _{B_{k[t^{1/d}]}/k[t^{1/d}]} \big|_{\overline{B}^{(\gamma)}_{1/d}} \Bigr) \to \beta ^* \Omega _{\overline{B}^{(\gamma)}_{1/d} /k[t^{1/d}]} \big/ \Bigl( \beta ^* \Omega _{\overline{B}^{(\gamma)}_{1/d} /k[t^{1/d}]} \Bigr)_{\rm tor} \right).
\]
Taking the $n$-th exterior power, the isomorphism (i) implies that $a$ is equal to the length of 
\[
\operatorname{Coker} \left( \beta ^* \Bigl( \Omega^n _{B_{k[t^{1/d}]}/k[t^{1/d}]} \big|_{\overline{B}^{(\gamma)}_{1/d}} \Bigr) \to \beta ^* \Omega^n _{\overline{B}^{(\gamma)}_{1/d} /k[t^{1/d}]} \big/ \Bigl( \beta ^* \Omega^n _{\overline{B}^{(\gamma)}_{1/d} / k[t^{1/d}]} \Bigr)_{\rm tor} \right). \tag{ii}
\]
We shall prove 
\[
\frac{a}{d} + 
\operatorname{ord}_{\beta} \left( \operatorname{Fitt}^n \Bigl( \Omega _{\overline{B}^{(\gamma)}_{1/d}/k[t^{1/d}]} \Bigr) \right)
= \operatorname{ord}_{\beta} \Bigl( \mathfrak{n}'_{\overline{B}^{(\gamma)}_{1/d}} \Bigr). \tag{iii}
\]
By the definition of $\mathfrak{n}'_{\overline{B}^{(\gamma)}_{1/d}}$, we have 
\[
\operatorname{ord}_{\beta} \Bigl( \mathfrak{n}'_{\overline{B}^{(\gamma)}_{1/d}} \Bigr) 
= \frac{1}{d} \operatorname{length} \left( \operatorname{Coker} \Bigl( \beta^* \Bigl( \Omega^n _{B_{k[t^{1/d}]}/k[t^{1/d}]} \big|_{\overline{B}^{(\gamma)}_{1/d}} \Bigr ) \to \beta ^* L_{\overline{B}^{(\gamma)}_{1/d}} \Bigr) \right). \tag{iv}
\]
By Lemma \ref{lem:L}(1),  we have
\[
\operatorname{ord}_{\beta} \left( \operatorname{Fitt}^n \Bigl( \Omega _{\overline{B}^{(\gamma)}_{1/d}/k[t^{1/d}]} \Bigr) \right)
= 
\frac{1}{d} \operatorname{length} \left( \operatorname{Coker} \Bigl( \beta ^* \Omega^n _{\overline{B}^{(\gamma)}_{1/d} /k[t^{1/d}]} \to \beta ^* L_{\overline{B}^{(\gamma)}_{1/d}} \Bigr) \right ) .
\]
Since $L_{\overline{B}^{(\gamma)}_{1/d}}$ is torsion-free, the map $\beta ^* \Omega^n _{\overline{B}^{(\gamma)}_{1/d} /k[t^{1/d}]} \to \beta ^* L_{\overline{B}^{(\gamma)}_{1/d}}$ factors through the torsion-free quotient
\[
\beta ^* \Omega^n _{\overline{B}^{(\gamma)}_{1/d} /k[t^{1/d}]} \big/ \Bigl( \beta ^* \Omega^n _{\overline{B}^{(\gamma)}_{1/d} /k[t^{1/d}]} \Bigr)_{\rm tor}. 
\]
Therefore, $\operatorname{ord}_{\beta} \left( \operatorname{Fitt}^n \Bigl( \Omega _{\overline{B}^{(\gamma)}_{1/d}/k[t^{1/d}]} \Bigr) \right)$ is equal to 
\[
\frac{1}{d} \operatorname{length} \left( \operatorname{Coker} \Bigl( \beta ^* \Omega^n _{\overline{B}^{(\gamma)}_{1/d} /k[t^{1/d}]} \big/ \Bigl( \beta ^* \Omega^n _{\overline{B}^{(\gamma)}_{1/d} /k[t^{1/d}]} \Bigr)_{\rm tor} \to \beta ^* L_{\overline{B}^{(\gamma)}_{1/d}}  \Bigr) \right ) . \tag{v}
\]
Thus, the assertion (iii) follows from (ii), (iv), and (v).

Since 
\begin{align*}
&\operatorname{Coker} \left( \beta ^* \Bigl( \Omega _{B_{k[t^{1/d}]}/k[t^{1/d}]} \big|_{\overline{B}^{(\gamma)}_{1/d}} \Bigr) \to \Bigl ( \beta ^* \Omega _{\overline{B}^{(\gamma)}_{1/d} /k[t^{1/d}]} \Bigr ) \big/ \Bigl ( \beta ^* \Omega _{\overline{B}^{(\gamma)}_{1/d} /k[t^{1/d}]} \Bigr )_{\rm tor} \right) \\ 
&\simeq 
\operatorname{Coker} \left( \alpha ^* \left ( \Omega _{B \times \mathbb{A}_k ^1/k[t]} \big|_{\overline{B}^{(\gamma)}} \right) \to 
\left( \alpha ^* \Omega _{\overline{B}^{(\gamma)}/k[t]} \right) \big/ \left( \alpha ^* \Omega _{\overline{B}^{(\gamma)}/k[t]} \right)_{\rm tor} \right) \otimes_{k[[t]]} k[[t^{\frac{1}{d}}]], 
\end{align*}
we have $\frac{a}{d} = \operatorname{ord}_{\alpha} \bigl( \operatorname{jac}_{\overline{\mu} _{\gamma}} \bigr)$ .
Therefore, the assertion follows from (iii) and Lemma \ref{lem:L2}. 
\end{proof}

We define a $k[t]$-scheme $\widetilde{B}^{(\gamma)}_{1/d}$ by 
\begin{align*}
\widetilde{B}^{(\gamma)}_{1/d}
&:=  \operatorname{Spec} \left( R^{C_{\gamma}}[t^{1/d}] \big/ \left(  I_{\overline{B}^{(\gamma)}} \cap R^{C_{\gamma}}[t] \right) R^{C_{\gamma}}[t^{1/d}] \right) \\
&=  \widetilde{B}^{(\gamma)} \times _{\operatorname{Spec} k[t]} \operatorname{Spec} k[t^{1/d}].
\end{align*}
Then, we have a natural morphism $\overline{B}^{(\gamma)}_{1/d} \to \widetilde{B}^{(\gamma)}_{1/d}$. 

\begin{defi}
By composing $\Omega^n _{\widetilde{B}^{(\gamma)}_{1/d}/k[t^{1/d}]} \big|_{\overline{B}^{(\gamma)}_{1/d}} \to \Omega ^n _{\overline{B}^{(\gamma)}_{1/d}/k[t^{1/d}]}$ and $\Omega ^n _{\overline{B}^{(\gamma)}_{1/d}/k[t^{1/d}]} \to L_{\overline{B}^{(\gamma)}_{1/d}}$ in Lemma \ref{lem:L}(1), 
we obtain a natural morphism $\Omega^n _{\widetilde{B}^{(\gamma)}_{1/d}/k[t^{1/d}]} \big|_{\overline{B}^{(\gamma)}_{1/d}} \to L_{\overline{B}^{(\gamma)}_{1/d}}$. 
Then, we denote by $\mathfrak{n}''_{\overline{B}^{(\gamma)}_{1/d}}$ the ideal sheaf on $\overline{B}^{(\gamma)}_{1/d}$ satisfying
\[
\operatorname{Im} \left( \Omega^n _{\widetilde{B}^{(\gamma)}_{1/d}/k[t^{1/d}]} \big|_{\overline{B}^{(\gamma)}_{1/d}} \to L_{\overline{B}^{(\gamma)}_{1/d}} \right) 
= \mathfrak{n}''_{\overline{B}^{(\gamma)}_{1/d}} \otimes _{\mathcal{O}_{\overline{B}^{(\gamma)}_{1/d}}} L_{\overline{B}^{(\gamma)}_{1/d}}. 
\]
\end{defi}

\begin{lem}\label{lem:L4}
There exists a $C_{\gamma}$-invariant ideal sheaf $\mathfrak{n}'_{p}$ on $\overline{B}^{(\gamma)}$ such that 
$\mathfrak{n}''_{\overline{B}^{(\gamma)}_{1/d}} = \mathfrak{n}'_{p} \mathcal{O}_{\overline{B}^{(\gamma)}_{1/d}}$. 
\end{lem}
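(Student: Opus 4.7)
The plan is to construct $\mathfrak{n}'_p$ directly on $\overline{B}^{(\gamma)}$ by repeating the construction of $\mathfrak{n}'_{5,4}$ one level down, then verify compatibility under base change to $B_4$ and the $C_\gamma$-invariance. Define the sheaf
\[
L_{\overline{B}^{(\gamma)}} := \bigl( \operatorname{det}^{-1}(I_{\overline{B}^{(\gamma)}}/I_{\overline{B}^{(\gamma)}}^2) \bigr) \big|_{\overline{B}^{(\gamma)}} \otimes_{\mathcal{O}_{\overline{B}^{(\gamma)}}} \bigl( \Omega^N_{\overline{A}^{(\gamma)}/k[t]} \bigr) \big|_{\overline{B}^{(\gamma)}}
\]
on $\overline{B}^{(\gamma)}$, without any $t^{w_\gamma({\bf f})}$ factor since $\overline{B}^{(\gamma)}$ is cut out by $I_{\overline{B}^{(\gamma)}}$ in $\overline{A}^{(\gamma)}$ directly. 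Running the argument of Lemma \ref{lem:L}(1), now applied to the closed embedding $\overline{B}^{(\gamma)} \subset \overline{A}^{(\gamma)}$ and the $c$ generators $\tilde f_i := \lambda_\gamma^*(f_i) t^{-w_\gamma(f_i)}$ of $I_{\overline{B}^{(\gamma)}}$, produces a natural morphism $\Omega^n_{\overline{B}^{(\gamma)}/k[t]} \to L_{\overline{B}^{(\gamma)}}$. Composing with the pullback $\Omega^n_{\widetilde{B}^{(\gamma)}/k[t]} \big|_{\overline{B}^{(\gamma)}} \to \Omega^n_{\overline{B}^{(\gamma)}/k[t]}$ from the quotient morphism $\overline{B}^{(\gamma)} \to \widetilde{B}^{(\gamma)}$, we define $\mathfrak{n}'_p \subset \mathcal{O}_{\overline{B}^{(\gamma)}}$ by
\[
\operatorname{Im} \bigl( \Omega^n_{\widetilde{B}^{(\gamma)}/k[t]} \big|_{\overline{B}^{(\gamma)}} \to L_{\overline{B}^{(\gamma)}} \bigr) = \mathfrak{n}'_p \otimes_{\mathcal{O}_{\overline{B}^{(\gamma)}}} L_{\overline{B}^{(\gamma)}}.
\]

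To establish $\mathfrak{n}'_p \mathcal{O}_{B_4} = \mathfrak{n}'_{5,4}$, the key is the identification $L_4 \simeq L_{\overline{B}^{(\gamma)}} \otimes_{\mathcal{O}_{\overline{B}^{(\gamma)}}} \mathcal{O}_{B_4}$. Using $\lambda_\gamma^*(f_i) = t^{w_\gamma(f_i)} \tilde f_i$ in $R[t^{1/d}]$, one obtains a natural $\mathcal{O}_{B_4}$-linear map $(I_2/I_2^2)|_{B_4} \to (I_{\overline{B}^{(\gamma)}}/I_{\overline{B}^{(\gamma)}}^2)|_{B_4}$ given by $\bar f_i \otimes 1 \mapsto t^{w_\gamma(f_i)} \bar{\tilde f}_i \otimes 1$, whose determinant is $t^{w_\gamma({\bf f})}$. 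Taking $\operatorname{det}^{-1}$ and tensoring with $\Omega^N_{A_3/k[t^{1/d}]} \big|_{B_4}$ identifies $L_{\overline{B}^{(\gamma)}} \otimes_{\mathcal{O}_{\overline{B}^{(\gamma)}}} \mathcal{O}_{B_4}$ with the submodule $L_4$ of $\operatorname{det}^{-1}(I_2/I_2^2)|_{B_4} \otimes \Omega^N_{A_3/k[t^{1/d}]} \big|_{B_4}$; the factor $t^{w_\gamma({\bf f})}$ in the definition of $L_4$ is precisely what this determinant produces. Under this identification, the morphism $\Omega^n_{B_5/k[t^{1/d}]} \big|_{B_4} \to L_4$ coincides with the base change of the morphism $\Omega^n_{\widetilde{B}^{(\gamma)}/k[t]} \big|_{\overline{B}^{(\gamma)}} \to L_{\overline{B}^{(\gamma)}}$ defining $\mathfrak{n}'_p$, so the images match and the claimed equality follows.

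For $C_\gamma$-invariance, note that $\overline{B}^{(\gamma)}$ is $C_\gamma$-invariant by Lemma \ref{lem:incl}(3), and the natural $C_\gamma$-action on $\overline{A}^{(\gamma)}$ preserves $I_{\overline{B}^{(\gamma)}}$, while $\widetilde{B}^{(\gamma)} = \overline{B}^{(\gamma)}/C_\gamma$ carries the trivial $C_\gamma$-action. Hence $L_{\overline{B}^{(\gamma)}}$ is a $C_\gamma$-equivariant sheaf and the defining morphism is $C_\gamma$-equivariant. Its image is therefore a $C_\gamma$-invariant subsheaf of $L_{\overline{B}^{(\gamma)}}$, and since $L_{\overline{B}^{(\gamma)}}$ is locally generated by a single section, the corresponding ideal $\mathfrak{n}'_p \subset \mathcal{O}_{\overline{B}^{(\gamma)}}$ is $C_\gamma$-invariant. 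The main obstacle will be the careful verification of the identification $L_4 \simeq L_{\overline{B}^{(\gamma)}} \otimes_{\mathcal{O}_{\overline{B}^{(\gamma)}}} \mathcal{O}_{B_4}$ and the commutativity of the resulting square relating the two defining morphisms; the factor $t^{w_\gamma({\bf f})}$ in the definition of $L_4$ plays its essential role here as the determinant of the natural transition map between the two conormal modules.
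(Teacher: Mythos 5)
Your overall strategy---rerun the construction of $\mathfrak{n}'_{5,4}$ one level down on $\overline{B}^{(\gamma)}$ and then check base-change compatibility and $C_{\gamma}$-equivariance---is the right one, and the equivariance argument is sound in spirit. The genuine gap is exactly at the step you flag as the main obstacle. The sheaf $L_{\overline{B}^{(\gamma)}} := \operatorname{det}^{-1}\bigl(I_{\overline{B}^{(\gamma)}}/I_{\overline{B}^{(\gamma)}}^2\bigr)\otimes \bigl(\Omega^N_{\overline{A}^{(\gamma)}/k[t]}\bigr)\big|_{\overline{B}^{(\gamma)}}$ is not invertible in general, and the identification $L_4\simeq L_{\overline{B}^{(\gamma)}}\otimes_{\mathcal{O}_{\overline{B}^{(\gamma)}}}\mathcal{O}_{B_4}$ fails. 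The reason is that the generators $\tilde f_i=\lambda^*_{\gamma}(f_i)t^{-w_{\gamma}(f_i)}$ of $I_{\overline{B}^{(\gamma)}}$ need not form a regular sequence in $R[t]$: as emphasized in the introduction and in Remark \ref{rmk:seisitu}(3) (cf.\ \cite{NS22}*{Remark 4.4}), $\overline{B}^{(\gamma)}$ is in general neither normal nor a complete intersection and may carry extra irreducible components over $t=0$. Then $I_{\overline{B}^{(\gamma)}}/I_{\overline{B}^{(\gamma)}}^2$ is not locally free of rank $c$, $\bigwedge^c$ of it is a cyclic module with possibly nonzero annihilator, its dual is an ideal of $\mathcal{O}_{\overline{B}^{(\gamma)}}$ rather than a line bundle, and the equation $\operatorname{Im}(\cdots)=\mathfrak{n}'_p\otimes L_{\overline{B}^{(\gamma)}}$ neither determines nor is guaranteed to admit an ideal sheaf $\mathfrak{n}'_p$. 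For the same reason the ``determinant'' of your transition map $(I_2/I_2^2)|_{B_4}\to (I_{\overline{B}^{(\gamma)}}/I_{\overline{B}^{(\gamma)}}^2)|_{B_4}$ is not defined as a scalar (the target has no basis), and even when it is, multiplication by $t^{w_{\gamma}({\bf f})}$ can fail to be injective on $\mathcal{O}_{B_4}$, so dualizing identifies $L_4$ only with a quotient of $L_{\overline{B}^{(\gamma)}}|_{B_4}$, not with $L_{\overline{B}^{(\gamma)}}|_{B_4}$ itself.

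The paper's proof avoids dualizing the conormal module of $\overline{B}^{(\gamma)}$ altogether. It keeps $\bigwedge^c(I_2/I_2^2)$ --- the conormal module of the honest complete intersection $\overline{B}$, which is free of rank $c$ --- on the source side of the pairing of Lemma \ref{lem:L}, lands in the invertible sheaf $\Omega^N_{R[t^{1/d}]/k[t^{1/d}]}\otimes S_5$, and reads off explicit generators: $\mathfrak{n}'_p$ is defined as the ideal generated by the Jacobian determinants $\Delta_J$ of $\tilde f_1,\ldots,\tilde f_c$ together with $N-c$ of the algebra generators $g_1,\ldots,g_{\ell}$ of $R^{C_{\gamma}}$. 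The factor $t^{w_{\gamma}({\bf f})}$ then appears as a common factor of the images of the generators, matching the twist built into the definition of $L_4$, and $C_{\gamma}$-invariance is verified directly on the $\Delta_J$. To repair your argument, replace the dual $\operatorname{det}^{-1}(I_{\overline{B}^{(\gamma)}}/I_{\overline{B}^{(\gamma)}}^2)$ by this generator computation (equivalently, tensor the source with $\bigwedge^c(I_{\overline{B}^{(\gamma)}}/I_{\overline{B}^{(\gamma)}}^2)$ instead of dualizing it); as written, the definition of $\mathfrak{n}'_p$ is not well-posed.
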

\begin{proof}
Take elements $g_1, \ldots , g_{\ell} \in R^{C_{\gamma}}$ satisfying $R^{C_{\gamma}} = k[g_1, \ldots, g_{\ell}]$. 
For a subset $J \subset \{ 1, \ldots, \ell \}$ with $\# J = N-c$, we define $\Delta _J \in R[t]$ as the determinant of the Jacobian matrix (of size $N$) with respect to $\lambda ^* _{\gamma} (f_i) t^{- w_{\gamma}(f_i)}$ for $1 \le i \le c$ and $g_i$ for $i \in J$, and $\partial x_j$ for $1 \le j \le N$. 
Note that $\Delta _J$ is defined up to sign.
We define an ideal $\mathfrak{n}'_{p} \subset R[t] / I_{\overline{B}^{(\gamma)}}$ as 
\[
\mathfrak{n}'_{p} := \bigl ( \overline{ \Delta _J} \ \big| \ \text{$J \subset \{ 1, \ldots, \ell \}$ with $\# J = N-c$} \bigr ), 
\]
where $\overline{ \Delta _J}$ denoted the image of $\Delta _J \in R[t]$ in $R[t] / I_{\overline{B}^{(\gamma)}}$. 
For any $\delta \in C_{\gamma}$, by Lemma \ref{lem:lambda}(1), we have 
\[
\delta \left( \lambda ^* _{\gamma} (f_i) t^{- w_{\gamma}(f_i)} \right) 
= 
\xi^{d w_{\delta} (f_i)} \lambda ^* _{\gamma} (f_i) t^{- w_{\gamma}(f_i)}. 
\]
Furthermore, note that $\delta$ maps $x_i$'s to their $k$-linear independent $k$-linear combinations. 
Therefore, for each $J$, we have 
\begin{align*}
&
\xi^{d w_{\delta} ({\bf f})} \cdot \Delta _J \cdot dx_1 \wedge \cdots \wedge dx_N \\
&=
\xi^{d w_{\delta} ({\bf f})} \cdot 
\bigl( \wedge_{i \in J} \ d g_i \bigr) 
\wedge 
\left( \wedge _{1 \le i \le c}\ d \left( \lambda ^* _{\gamma} (f_i) t^{- w_{\gamma}(f_i)} \right) \right) \\
&=
\bigl( \wedge_{i \in J}\ d (\delta (g_i)) \bigr) 
\wedge 
\left( \wedge _{1 \le i \le c} \ d \left( \delta \left( \lambda ^* _{\gamma} (f_i) t^{- w_{\gamma}(f_i)} \right) \right) \right) \\
&=
\delta \left( 
\bigl( \wedge_{i \in J}\ d g_i \bigr) 
\wedge 
\left( \wedge _{1 \le i \le c} \ d \left( \lambda ^* _{\gamma} (f_i) t^{- w_{\gamma}(f_i)} \right) \right)
\right)\\
&=
\delta \left(
\Delta _J \cdot dx_1 \wedge \cdots \wedge dx_N
\right) \\
&=
\delta \left( \Delta _J \right) 
\cdot d (\delta (x_1)) \wedge \cdots \wedge d (\delta (x_N)) \\
&= 
\delta \left( \Delta _J \right) \cdot s \cdot dx_1 \wedge \cdots \wedge dx_N 
\end{align*}
for some $s \in k^{\times}$. 
Hence, we have $\delta (\Delta _J) = s' \cdot \Delta _J$ for some $s' \in k$. 
Hence, we conclude that $\mathfrak{n}'_{p}$ is $C_{\gamma}$-invariant. 

We shall prove that this $\mathfrak{n}'_{p}$ satisfies the assertion $\mathfrak{n}''_{\overline{B}^{(\gamma)}_{1/d}} = \mathfrak{n}'_{p} \mathcal{O}_{\overline{B}^{(\gamma)}_{1/d}}$.  
We set 
\[
S_{\widetilde{B}^{(\gamma)}_{1/d}} := R^{C_{\gamma}}[t^{1/d}] \big/ \left(  I_{\overline{B}^{(\gamma)}} \cap R^{C_{\gamma}}[t] \right) R^{C_{\gamma}}[t^{1/d}]. 
\]
We also use the notation in the proof of Lemma \ref{lem:L}. 
By the proof of Lemma \ref{lem:L}(1), the considered map $\Omega^n _{\widetilde{B}^{(\gamma)}_{1/d}/k[t^{1/d}]} \big|_{\overline{B}^{(\gamma)}_{1/d}} \to L_{\overline{B}^{(\gamma)}_{1/d}}$ is corresponding to 
\begin{align*}
\tag{i}
\left ( \left( \bigwedge ^{c} \left( I_{\overline{B}_{k[t^{1/d}]}}/ I^2_{\overline{B}_{k[t^{1/d}]}} \right) \otimes _{S_{\overline{B}_{k[t^{1/d}]}}} S_{\widetilde{B}^{(\gamma)}_{1/d}} \right)  \otimes _{S_{\widetilde{B}^{(\gamma)}_{1/d}}} \Omega _{S_{\widetilde{B}^{(\gamma)}_{1/d}}/k[t^{1/d}]} ^{N-c} \right) &
\otimes _{S_{\widetilde{B}^{(\gamma)}_{1/d}}} S_{\overline{B}^{(\gamma)}_{1/d}} \\
\longrightarrow \ \ \Omega _{R[t^{1/d}]/k[t^{1/d}]} ^N \otimes _{R[t^{1/d}]} S_{\overline{B}^{(\gamma)}_{1/d}}. & 
\end{align*}

The
$S_{\widetilde{B}^{(\gamma)}_{1/d}}$-module 
\[
\left( \bigwedge ^{c} \left( I_{\overline{B}_{k[t^{1/d}]}}/ I^2_{\overline{B}_{k[t^{1/d}]}} \right) \otimes _{S_{\overline{B}_{k[t^{1/d}]}}} S_{\widetilde{B}^{(\gamma)}_{1/d}} \right) \otimes _{S_{\widetilde{B}^{(\gamma)}_{1/d}}} \Omega _{S_{\widetilde{B}^{(\gamma)}_{1/d}}/k[t^{1/d}]} ^{N-c}
\] is generated by the elements of the form 
\[
\left( \overline{f_1} \wedge \cdots \wedge \overline{f_c} \right) \otimes \left( d \overline{g}_{i_1} \wedge \cdots \wedge d \overline{g}_{i_{N-c}} \right).  
\]
Its image by the map (i) is
\begin{align*}
&d \left( \lambda _{\gamma} ^* (f_1) \right) \wedge \cdots \wedge d \left( \lambda _{\gamma} ^* (f_c) \right) \wedge  dg_{i _1} \wedge \cdots \wedge dg_{i_{N-c}} \\
&=t^{w_{\gamma}({\bf f})} \cdot d \left( \lambda _{\gamma} ^* (f_1) t^{- w_{\gamma} (f_1)} \right) \wedge \cdots \wedge d \left( \lambda _{\gamma} ^* (f_c) t^{- w_{\gamma} (f_c)} \right) \wedge  d g_{i _1} \wedge \cdots \wedge d g_{i_{N-c}}. 
\end{align*}
This is equal to 
\[
t^{w_{\gamma}({\bf f})} \cdot \Delta _J \cdot dx_1 \wedge \cdots \wedge dx_N
\]
for $J = \{ i_1, \ldots , i_{N-c} \}$. 
Therefore, we conclude that the image of the map (i) is equal to 
\[
t^{w_\gamma({\bf f})} \mathfrak{n}'_{p} \left( \Omega _{R[t^{1/d}]/k[t^{1/d}]} ^N \otimes _{R[t^{1/d}]} S_{\overline{B}^{(\gamma)}_{1/d}} \right), 
\]
which completes the proof. 
\end{proof}

\begin{lem}[cf.\ \cite{NS22}*{Lemma 4.6(1)}]\label{lem:L5}
Let $\alpha \in \overline{B}^{(\gamma)} _{\infty}$ be an arc with 
$\operatorname{ord}_{\alpha} \left( \operatorname{Jac} _{\overline{B}^{(\gamma)}/k[t]} \right) < \infty$. 
Then, for an ideal sheaf $\mathfrak{n}'_{p}$ in Lemma \ref{lem:L4}, we have
\[
\operatorname{ord}_{\alpha} \left( \operatorname{jac}_p \right) + \operatorname{ord}_{\alpha} \left( \operatorname{Jac} _{\overline{B}^{(\gamma)}/k[t]} \right)
= \operatorname{ord}_{\alpha} \left( \mathfrak{n}'_{p} \right). 
\]
\end{lem}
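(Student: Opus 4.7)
The plan is to mimic the proof of Lemma \ref{lem:L3} closely, replacing the factorization through $B_1$ by the factorization through $B_5$ and using Lemma \ref{lem:L4} to translate between ideals on $\overline{B}^{(\gamma)}$ and on $B_4$. First I would pick a lift $\beta : \operatorname{Spec} k[[t^{1/d}]] \to B_4$ of $\alpha : \operatorname{Spec} k[[t]] \to \overline{B}^{(\gamma)}$ and adopt, as in the proof of Lemma \ref{lem:L3}, the convention that $\operatorname{ord}_\beta(\mathfrak{a}) \in \tfrac{1}{d}\mathbb{Z}_{\ge 0}$ for an ideal sheaf $\mathfrak{a} \subset \mathcal{O}_{B_4}$.

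Next, I would factor the defining morphism $\Omega^n_{B_5/k[t^{1/d}]}\big|_{B_4} \to L_4$ of $\mathfrak{n}'_{5,4}$ as $\Omega^n_{B_5/k[t^{1/d}]}\big|_{B_4} \to \Omega^n_{B_4/k[t^{1/d}]} \to L_4$; by Lemma \ref{lem:L}(1) the second arrow has image $\operatorname{Fitt}^n(\Omega_{B_4/k[t^{1/d}]}) \otimes L_4$. Pulling back by $\beta$ and repeating the algebraic argument of \cite{NS22}*{Lemma 4.6} --- exactly the step used in the proof of Lemma \ref{lem:L3} for the factorization through $B_1$ --- yields the intermediate identity
\[
\operatorname{ord}_\beta(\mathfrak{n}'_{5,4}) = \tfrac{b}{d} + \operatorname{ord}_\beta\bigl(\operatorname{Fitt}^n(\Omega_{B_4/k[t^{1/d}]})\bigr),
\]
where $b$ is the $k[[t^{1/d}]]$-length of the cokernel of the induced map $\beta'^{*} \Omega_{B_5/k[t^{1/d}]} \to \beta^{*}\Omega_{B_4/k[t^{1/d}]}$ modulo torsion, with $\beta'$ the composite of $\beta$ with $B_4 \to B_5$.

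Finally, I would translate each of the three terms back to an invariant of $\alpha$ on $\overline{B}^{(\gamma)}$. Lemma \ref{lem:L4} gives $\operatorname{ord}_\beta(\mathfrak{n}'_{5,4}) = \operatorname{ord}_\alpha(\mathfrak{n}'_p)$; since Fitting ideals commute with the flat base change $k[t] \to k[t^{1/d}]$, $\operatorname{ord}_\beta(\operatorname{Fitt}^n(\Omega_{B_4/k[t^{1/d}]})) = \operatorname{ord}_\alpha(\operatorname{Jac}_{\overline{B}^{(\gamma)}/k[t]})$; and the free rank-$d$ extension $k[[t]] \to k[[t^{1/d}]]$ identifies the cokernel defining $b$ with the tensor product over $k[[t]]$ with $k[[t^{1/d}]]$ of the analogous cokernel for $p : \overline{B}^{(\gamma)} \to \widetilde{B}^{(\gamma)}$, and this operation multiplies $k[[t]]$-length by $d$, giving $b/d = \operatorname{ord}_\alpha(\operatorname{jac}_p)$. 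Substituting produces the claimed equality. The main obstacle is purely bookkeeping: tracking the $1/d$-denominator in $\operatorname{ord}_\beta$ against the factor of $d$ appearing in $k[[t^{1/d}]]$-lengths so that all factors cancel correctly; beyond this, the proof requires no conceptual input beyond Lemma \ref{lem:L}(1), Lemma \ref{lem:L4}, and \cite{NS22}*{Lemma 4.6}.
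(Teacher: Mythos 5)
Your proposal is correct and is exactly the argument the paper intends: the paper's proof of this lemma consists of the single sentence ``the assertion is proved by the same argument as in Lemma \ref{lem:L3}'', and your write-up spells out precisely that argument, with $B_1$ and $\mathfrak{n}'_{1,4}$ replaced by $B_5$ and $\mathfrak{n}'_{5,4}$, the identity $\mathfrak{n}'_{5,4}=\mathfrak{n}'_p\mathcal{O}_{B_4}$ from Lemma \ref{lem:L4} used in place of Lemma \ref{lem:L2}, and the length/normalization bookkeeping under the degree-$d$ base change $k[[t]]\to k[[t^{1/d}]]$ handled correctly.
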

\begin{proof}
The assertion is proved by the same argument as in Lemma \ref{lem:L3}. 
\end{proof}

\section{Minimal log discrepancies and arc spaces for quotient varieties}\label{section:mld}
We keep the notation from Sections \ref{section:setting} and \ref{section:order}. 
We fix an ideal sheaf $\mathfrak{n}'_{p}$ on $\overline{B}^{(\gamma)}$ satisfying the statement of Lemma \ref{lem:L4}. 
Let $I_Z \subset \mathcal{O}_B$ be the ideal sheaf on $B$ defining the closed subscheme $(B \cap Z)_{\rm red} \subset B$. 
Let $x \in A = \overline{A}/G$ be the image of the origin of $\overline{A} = \mathbb{A}^N _k$. 
Then, we have $x \in B$ since $f_1, \ldots, f_c \in (x_1, \ldots, x_N)$. 
Let $\mathfrak{m}_x \subset \mathcal{O}_B$ denote the corresponding maximal ideal. 

In this section and the next section, we will consider the codimensions of cylinders in $\overline{B}^{(\gamma)} _{\infty}$ with respect to $n$, 
and consider the codimensions of cylinders in $\overline{A}^{(\gamma)} _{\infty}$ with respect to $N$, respectively (cf.\ Remark \ref{rmk:seisitu}(2)). 

\begin{lem}\label{lem:thin}
Let $\mathfrak{a} \subset \mathcal{O}_B$ be an ideal sheaf on $B$. 
Let $J$ be one of the following ideal sheaves on $\overline{B}^{(\gamma)}$: 
\[
\mathfrak{a} \mathcal{O}_{\overline{B}^{(\gamma)}}, \quad
\operatorname{Jac}_{\overline{B}^{(\gamma)}/k[t]}, \quad 
\operatorname{Jac}_{\widetilde{B}^{(\gamma)}/k[t]} \mathcal{O}_{\overline{B}^{(\gamma)}}, \quad 
\mathfrak{n}'_{p}.
\]
Let $W$ be the subscheme of $\overline{B}^{(\gamma)}$ defined by $J$. 
Then the following assertions hold. 

\begin{enumerate}
\item 
The ideal $J$ is $C_{\gamma}$-invariant. 

\item 
Suppose $\mathfrak{a} \not = 0$. 
Then, $W_{\infty}$ is a thin set of $\overline{B}^{(\gamma)} _{\infty}$.

\item
Suppose $\mathfrak{a} \not = 0$. 
Then, $\overline{\mu}_{\gamma \infty} (W_{\infty})$ is a thin set of $B_{\infty}$.
\end{enumerate}
In (3), we consider $B_{\infty}$ to be the arc space of $B \times _{\operatorname{Spec} k} \operatorname{Spec} k[t]$ as a $k[t]$-schemes, 
and we adopt Definition \ref{defi:thin} for the definition of thin sets.
\end{lem}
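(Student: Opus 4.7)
The plan is to address the three assertions in order. For (1), the $C_\gamma$-invariance of each ideal is immediate from its construction. Namely, $\mathfrak{a}\mathcal{O}_{\overline{B}^{(\gamma)}}$ is pulled back from $B$ via $\overline{\mu}_\gamma$, which factors through the $C_\gamma$-quotient $\widetilde{B}^{(\gamma)}$ by Remark \ref{rmk:seisitu}(1), so this ideal is automatically $C_\gamma$-invariant; similarly $\operatorname{Jac}_{\widetilde{B}^{(\gamma)}/k[t]}\mathcal{O}_{\overline{B}^{(\gamma)}}$ is pulled back from the quotient $\widetilde{B}^{(\gamma)}$. The Fitting ideal $\operatorname{Jac}_{\overline{B}^{(\gamma)}/k[t]}$ is preserved because $C_\gamma$ acts on $\overline{B}^{(\gamma)}$ over $k[t]$, so the induced action on $\Omega_{\overline{B}^{(\gamma)}/k[t]}$ preserves its Fitting ideals. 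Finally, the $C_\gamma$-invariance of $\mathfrak{n}'_p$ is precisely Lemma \ref{lem:L4}.

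For (2), I would use the key fact from Remark \ref{rmk:seisitu}(3) that $\overline{B}^{(\gamma)}_\infty = W'_\infty$, where $W'$ is the unique irreducible component of $\overline{B}^{(\gamma)}$ dominating $\operatorname{Spec} k[t]$, with $\dim W' = n+1$. Thus $W_\infty \subset (W\cap W')_\infty$, and thinness reduces to showing $\dim(W\cap W') \le n$, i.e.\ that $J$ does not vanish identically on $W'$. The \'{e}tale surjective morphism $\overline{B}\times(\mathbb{A}^1_k\setminus\{0\}) \to \overline{B}^{(\gamma)}_{t\ne 0}$ of Remark \ref{rmk:seisitu2} covers a dense open subset of $W'$, so it suffices to verify the pullback of $J$ is nonzero on the product. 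For $J = \mathfrak{a}\mathcal{O}_{\overline{B}^{(\gamma)}}$ this reduces to $\mathfrak{a}\mathcal{O}_{\overline{B}}\ne 0$, which follows from the surjectivity of $\overline{B}\to B$ and the assumption $\mathfrak{a}\ne 0$. For the three Jacobian-type ideals I would invoke generic smoothness in characteristic zero: $\overline{B}$ is normal by Lemma \ref{lem:etaleness of B}(2), hence generically smooth over $k$, so $\overline{B}\times(\mathbb{A}^1_k\setminus\{0\})$ is generically smooth over $k[t]$ of relative dimension $n$, making $\operatorname{Fitt}^n \Omega_{-/k[t]}$ the unit ideal on a dense open set. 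Combined with compatibility of Fitting ideals of relative K\"{a}hler differentials with \'{e}tale base change, this gives nonvanishing of $\operatorname{Jac}_{\overline{B}^{(\gamma)}/k[t]}$ on $W'$; the same argument, applied to the integral $k[t]$-scheme $\widetilde{B}^{(\gamma)}_{t\ne 0}$ of dimension $n+1$ (Remark \ref{rmk:seisitu}(3)), gives nonvanishing of $\operatorname{Jac}_{\widetilde{B}^{(\gamma)}/k[t]}\mathcal{O}_{\overline{B}^{(\gamma)}}$ on $W'$. For $\mathfrak{n}'_p$, I would apply Lemma \ref{lem:L5} at the generic point of $W'$: since the finite quotient $p:\overline{B}^{(\gamma)}\to\widetilde{B}^{(\gamma)}$ is generically \'{e}tale in characteristic zero (so $\operatorname{ord}_\alpha(\operatorname{jac}_p)$ is finite for a generic arc $\alpha$) and $\operatorname{Jac}_{\overline{B}^{(\gamma)}/k[t]}$ is generically nonzero by what we have just shown, the lemma yields that $\mathfrak{n}'_p$ is nonzero on $W'$.

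For (3), set $\overline{W} := W\cap W'$, so $\dim\overline{W}\le n$ by the reduction above. Any arc in $W_\infty \subset \overline{W}_\infty$ maps under $\overline{\mu}_{\gamma\infty}$ to an arc on $B\times\mathbb{A}^1_k$ whose set-theoretic image lies in $\overline{\mu}_\gamma(\overline{W})$. Taking $Z$ to be the reduced closure of $\overline{\mu}_\gamma(\overline{W})$ in $B\times\mathbb{A}^1_k$, the dimension of the image cannot exceed the dimension of the source, so $\dim Z \le \dim\overline{W}\le n$; and every such image arc factors through $Z$ because $\operatorname{Spec} k[[t']]$ and $Z$ are reduced. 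Hence $\overline{\mu}_{\gamma\infty}(W_\infty)\subset Z_\infty$ is thin, as required. The main obstacle is the bookkeeping in (2): carefully justifying that each of the four Jacobian-type ideals restricts nontrivially to the dominant component $W'$ via the \'{e}tale model and generic smoothness, and in particular confirming the compatibility of $\operatorname{Fitt}^n\Omega_{-/k[t]}$ with the \'{e}tale morphism of Remark \ref{rmk:seisitu2} and with the finite quotient $p$.
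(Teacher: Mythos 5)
Your proposal is correct and follows essentially the same strategy as the paper: reduce thinness to non-vanishing of $J$ on the dominant component via Remark \ref{rmk:seisitu}(3), check this on the locus $t \neq 0$ using Remark \ref{rmk:seisitu2} (the paper uses the fiber at $t=1$, you use the generic fiber --- the same identification), and deduce (3) from (2) by pushing forward a low-dimensional subscheme. The only real divergence is the case $J = \mathfrak{n}'_p$: the paper combines Lemmas \ref{lem:additive}, \ref{lem:L3} and \ref{lem:L5} to show $W_\infty$ is contained in the union of the thin sets attached to $\operatorname{Jac}_{\overline{B}^{(\gamma)}/k[t]}$, $\operatorname{Jac}_{\widetilde{B}^{(\gamma)}/k[t]}\mathcal{O}_{\overline{B}^{(\gamma)}}$ and $\mathfrak{n}_{r,B}\mathcal{O}_{\overline{B}^{(\gamma)}}$, whereas you get finiteness of $\operatorname{ord}_\alpha(\operatorname{jac}_p)$ from generic \'{e}taleness of the quotient $p$ and then apply Lemma \ref{lem:L5} at a sufficiently general arc; both are valid, yours avoiding the Nash ideal at the cost of needing such an arc to exist (and if it does not, $W'_\infty$ itself lies in a union of thin sets, so the conclusion holds anyway).
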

\begin{proof}
Note that the map $\overline{B}^{(\gamma)} \to B$ factors $\overline{B}^{(\gamma)} \to \widetilde{B}^{(\gamma)} \to B$, and the first morphism $\overline{B}^{(\gamma)} \to \widetilde{B}^{(\gamma)}$ is the quotient morphism by $C_{\gamma}$ (Remark \ref{rmk:seisitu}(1)). 
Therefore, $\mathfrak{a} \mathcal{O}_{\overline{B}^{(\gamma)}}$ and $\operatorname{Jac}_{\widetilde{B}^{(\gamma)}/k[t]} \mathcal{O}_{\overline{B}^{(\gamma)}}$ are $C_{\gamma}$-invariant. The assertion (1) for the other cases follows from the definition of $\operatorname{Jac}_{\overline{B}^{(\gamma)}/k[t]}$ and the choice of $\mathfrak{n}'_{p}$ in Lemma \ref{lem:L4}. 

We shall prove (2). 
Let $X$ be the unique component of $\overline{B}^{(\gamma)}$ which dominates $\operatorname{Spec} k[t]$ (Remark \ref{rmk:seisitu}(3)). 
Since $\overline{B}^{(\gamma)} _{\infty} = X_{\infty}$ by Remark \ref{rmk:seisitu}(3), we have 
$W_{\infty} = (W \cap X)_{\infty}$. 
Therefore, in order to prove that $W_{\infty}$ is a thin set of $\overline{B}^{(\gamma)} _{\infty}$, it is sufficient to show $\dim (W \cap X) \le n$. 
Note that $\dim X = n+1$ (Remark \ref{rmk:seisitu}(3)) and that
$X_{t \not = 0} = \overline{B}^{(\gamma)}_{t \not = 0}$ is integral (Remark \ref{rmk:seisitu}(3)). 
Therefore, it is sufficient to show that $J \mathcal{O}_{X_{t=a}} \not = 0$ for some $a \in k^{\times}$. 

By the identification 
\[
\overline{B} \simeq \overline{B} \times \{ 1 \} \xrightarrow{\ \ \simeq \ \ } \overline{B}^{(\gamma)}_{t = 1} = X_{t = 1}. 
\]
in Remark \ref{rmk:seisitu2}(i), we have 
\[
\mathfrak{a} \mathcal{O}_{X_{t=1}} = \mathfrak{a}, \quad
\operatorname{Jac}_{\overline{B}^{(\gamma)}/k[t]} \mathcal{O}_{X_{t=1}} = \operatorname{Jac}_{\overline{B}}, \quad
\operatorname{Jac}_{\widetilde{B}^{(\gamma)}/k[t]} \mathcal{O}_{X_{t=1}} = 
\operatorname{Jac}_{\overline{B}/{C_{\gamma}}} \mathcal{O}_{\overline{B}}. 
\]
Since these are non-zero ideal sheaves on $\overline{B}$, we complete the proof of (2) except for the case $J = \mathfrak{n}'_{p}$. 

We prove the assertion (2) for the case $J = \mathfrak{n}'_{p}$. 
We set 
\[
J_1 := \operatorname{Jac}_{\overline{B}^{(\gamma)}/k[t]}, \quad
J_2 := \operatorname{Jac}_{\widetilde{B}^{(\gamma)}/k[t]} \mathcal{O}_{\overline{B}^{(\gamma)}}, \quad
J_3 := \mathfrak{n}_{r,B} \mathcal{O}_{\overline{B}^{(\gamma)}}. 
\]
For each $i = 1,2,3$, let $W_i$ be the subscheme of $\overline{B}^{(\gamma)}$ defined by $J _i$. 
Note that $\mathfrak{n}_{r,B} \neq 0$. 
Then, by what we have already proved, each $(W_i)_{\infty}$ is a thin set of $\overline{B}^{(\gamma)} _{\infty}$. 
We shall prove the inclusion
\[
W_{\infty} \subset (W_1)_{\infty} \cup (W_2)_{\infty} \cup (W_3)_{\infty}, \tag{i}
\]
which implies that $W_{\infty}$ is also a thin set of $\overline{B}^{(\gamma)} _{\infty}$. 
Suppose that an arc $\alpha \in \overline{B}^{(\gamma)} _{\infty}$ satisfies $\alpha \notin (W_1)_{\infty} \cup (W_2)_{\infty} \cup (W_3)_{\infty}$, that is, 
\begin{enumerate}
\item[(ii)] 
$\operatorname{ord}_{\alpha} \left( \operatorname{Jac}_{\overline{B}^{(\gamma)}/k[t]} \right ) < \infty$, 
\item[(iii)]
$\operatorname{ord}_{\alpha} \left( \operatorname{Jac}_{\widetilde{B}^{(\gamma)}/k[t]} \mathcal{O}_{\overline{B}^{(\gamma)}} \right)< \infty$, and 
\item[(iv)]
$\operatorname{ord}_{\alpha} \left( \mathfrak{n}_{r,B} \mathcal{O}_{\overline{B}^{(\gamma)}} \right) < \infty$. 
\end{enumerate}
By (ii), (iv), and Lemma \ref{lem:L3}, we have
\begin{enumerate}
\item[(v)]
$\operatorname{ord}_{\alpha} \bigl( \operatorname{jac}_{\overline{\mu} _{\gamma}} \bigr) < \infty$. 
\end{enumerate}
Then, by (ii), (iii), (v), and Lemma \ref{lem:additive} (applied to the composite morphism $\overline{\mu}_{\gamma} = \mu_{\gamma} \circ p$), 
we obtain
\begin{enumerate}
\item[(vi)]
$\operatorname{ord}_{\alpha} \left( \operatorname{jac}_p \right) < \infty$. 
\end{enumerate}
Finally, by (ii), (vi), and Lemma \ref{lem:L5}, we have 
\[
\operatorname{ord}_{\alpha} (\mathfrak{n}'_{p}) < \infty, 
\]
which implies $\alpha \notin W_{\infty}$ and finishes the proof of (i).

(3) follows from (2). 
\end{proof}

\begin{lem}\label{lem:D}
Let $\mathfrak{a} \subset \mathcal{O}_B$ be an ideal sheaf on $B$. 
For $\gamma \in G$ and ${\bf b} := (b_1, \ldots, b_7) \in \mathbb{Z}_{\ge 0} ^7$, 
we define a cylinder $D_{\gamma, {\bf b}} \subset \overline{B}^{(\gamma)}_{\infty}$ by
\begin{align*}
D_{\gamma, {\bf b}}
:= 
\operatorname{Cont}^{\ge 1} \left( \mathfrak{m}_x \mathcal{O}_{\overline{B}^{(\gamma)}} \right) 
\cap \operatorname{Cont}^{b_1} \left( \mathfrak{a}\mathcal{O}_{\overline{B}^{(\gamma)}} \right)
\cap \operatorname{Cont}^{b_2} \left( \operatorname{Jac}_{\overline{B}^{(\gamma)}/k[t]} \right) \\
\cap \operatorname{Cont}^{b_3} \left( \operatorname{Jac}_{\widetilde{B}^{(\gamma)}/k[t]} \mathcal{O}_{\overline{B}^{(\gamma)}} \right) 
\cap \operatorname{Cont}^{b_4}  \left( \operatorname{Jac}_{B} \mathcal{O}_{\overline{B}^{(\gamma)}} \right)
\cap \operatorname{Cont}^{b_5} \left( \mathfrak{n}'_{p} \right) \\
\cap \operatorname{Cont}^{b_6} \left( \mathfrak{n}_{r,B} \mathcal{O}_{\overline{B}^{(\gamma)}} \right)
\cap \operatorname{Cont}^{b_7} \left( I_Z \mathcal{O}_{\overline{B}^{(\gamma)}} \right).
\end{align*}
Then, $\overline{\mu} _{\gamma \infty} (D_{\gamma, {\bf b}})$ is a cylinder of $B_{\infty}$, and furthermore, we have 
\begin{align*}
\operatorname{codim}_{B_{\infty}} \bigl( \overline{\mu} _{\gamma \infty} (D_{\gamma, {\bf b}}) \bigr)
= 
\operatorname{codim} _{\overline{B}^{(\gamma)} _{\infty}} (D_{\gamma, {\bf b}})  
+ \operatorname{age}(\gamma) - w_{\gamma}({\bf f})  - b_2  + \frac{b_6}{r}. 
\end{align*}
\end{lem}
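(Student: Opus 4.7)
The plan is to factor $\overline{\mu}_{\gamma} = \mu_{\gamma} \circ p$ and apply the two arc-space codimension formulas (Proposition \ref{prop:EM6.2_k[t]} for $\mu_{\gamma}$, Proposition \ref{prop:DL2_k[t]} for the $C_{\gamma}$-quotient $p$) in succession. The contact-locus conditions packaged into $D_{\gamma, {\bf b}}$ are exactly what is needed to ensure that every Jacobian and Fitting-ideal order appearing in these two propositions is finite and constant on $D_{\gamma, {\bf b}}$, so the formulas can be combined cleanly.

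First I would check that $D_{\gamma, {\bf b}}$ is $C_{\gamma}$-invariant. Each of the defining ideal sheaves is $C_{\gamma}$-invariant by Lemma \ref{lem:thin}(1), so $D_{\gamma,{\bf b}}$ is a $C_{\gamma}$-invariant cylinder. Next I would use Lemmas \ref{lem:L3} and \ref{lem:L5} to convert the $b_i$'s into Jacobian orders along arcs $\alpha \in D_{\gamma, {\bf b}}$:
\begin{align*}
\operatorname{ord}_{\alpha}(\operatorname{jac}_p) &= b_5 - b_2, \\
\operatorname{ord}_{\alpha}(\operatorname{jac}_{\overline{\mu}_{\gamma}}) &= \tfrac{b_6}{r} - b_2 + \operatorname{age}(\gamma) - w_{\gamma}({\bf f}).
\end{align*}
Combined with Lemma \ref{lem:additive}, these give a constant order of $\operatorname{jac}_{\mu_{\gamma}}$ on $p_{\infty}(D_{\gamma,{\bf b}})$, namely $\tfrac{b_6}{r} + \operatorname{age}(\gamma) - w_{\gamma}({\bf f}) - b_5$.

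Second I would apply Proposition \ref{prop:DL2_k[t]} to the quotient morphism $p: \overline{B}^{(\gamma)} \to \widetilde{B}^{(\gamma)}$. The hypothesis of finite Fitting-ideal orders on source and target holds because $\operatorname{ord}_{\alpha}(\operatorname{Jac}_{\overline{B}^{(\gamma)}/k[t]}) = b_2 < \infty$ and $\operatorname{ord}_{p_{\infty}(\alpha)}(\operatorname{Jac}_{\widetilde{B}^{(\gamma)}/k[t]}) = b_3 < \infty$ (a pullback relation). Then I would apply Proposition \ref{prop:EM6.2_k[t]} to $\mu_{\gamma}: \widetilde{B}^{(\gamma)} \to B \times \mathbb{A}^1_k$. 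The target Fitting-ideal condition is ensured by $b_4 < \infty$. The necessary injectivity of $\mu_{\gamma \infty}$ on $p_{\infty}(D_{\gamma, {\bf b}})$ is the delicate point: Proposition \ref{prop:DL_X}(2) only gives injectivity on $\overline{B}^{(\gamma)}_{\infty}/C_{\gamma}$ outside $Z_{\infty}$, but the condition $\operatorname{Cont}^{b_7}(I_Z \mathcal{O}_{\overline{B}^{(\gamma)}})$ built into $D_{\gamma, {\bf b}}$ with $b_7$ finite exactly rules out arcs whose $\overline{\mu}_{\gamma \infty}$-image lies in $Z_{\infty}$, so Proposition \ref{prop:DL_X}(2) applies to $p_{\infty}(D_{\gamma,{\bf b}})$.

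Summing the two codimension identities
\[
\operatorname{codim}(p_{\infty}(D_{\gamma,{\bf b}})) = \operatorname{codim}(D_{\gamma,{\bf b}}) + (b_5 - b_2), \qquad
\operatorname{codim}(\overline{\mu}_{\gamma\infty}(D_{\gamma,{\bf b}})) = \operatorname{codim}(p_{\infty}(D_{\gamma,{\bf b}})) + \tfrac{b_6}{r} + \operatorname{age}(\gamma) - w_{\gamma}({\bf f}) - b_5
\]
yields the claimed formula, with the two $b_5$ contributions cancelling; the propositions also guarantee that $\overline{\mu}_{\gamma \infty}(D_{\gamma, {\bf b}})$ is a cylinder. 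The main obstacle I anticipate is bookkeeping the injectivity and finiteness hypotheses --- in particular verifying that the role of $Z$ (and the need for the assumption $\operatorname{codim}_B(B \cap Z) \ge 2$ made in Section \ref{section:setting}) is correctly handled so that the $Z_{\infty}$ exceptional locus in Proposition \ref{prop:DL_X}(2) does not intersect the cylinder we are pushing forward.
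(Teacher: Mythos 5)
Your proposal is correct and follows essentially the same route as the paper: factor $\overline{\mu}_{\gamma}=\mu_{\gamma}\circ p$, use Lemmas \ref{lem:thin}(1), \ref{lem:L3}, \ref{lem:L5} and \ref{lem:additive} to pin down the Jacobian orders, apply Proposition \ref{prop:DL2_k[t]} to $p$ and Proposition \ref{prop:EM6.2_k[t]} to $\mu_{\gamma}$ (with the $b_7$-condition ensuring the image avoids $Z_{\infty}$ so that $\mu_{\gamma\infty}$ is injective on $p_{\infty}(D_{\gamma,{\bf b}})$), and add the two codimension identities. The bookkeeping, including the cancellation of $b_5$, matches the paper's computation exactly.
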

\begin{proof}
We fix $\gamma \in G$ and ${\bf b} := (b_1, \ldots, b_7) \in \mathbb{Z}_{\ge 0} ^7$. 

By the definition of $D_{\gamma, {\bf b}}$, we have 
\[
D_{\gamma, {\bf b}} \subset 
\operatorname{Cont} ^{b_2} \left( \operatorname{Jac}_{\overline{B}^{(\gamma)}/k[t]} \right), \quad 
p_{\infty}\left( D_{\gamma, {\bf b}} \right) \subset 
\operatorname{Cont} ^{b_3} \left( \operatorname{Jac}_{\widetilde{B}^{(\gamma)}/k[t]} \right). 
\]
By Lemma \ref{lem:L5}, for any $\alpha \in D_{\gamma, {\bf b}}$, we have 
\[
\operatorname{ord}_{\alpha} (\operatorname{jac}_p) = b_5 - b_2. 
\]
Note that $D_{\gamma, {\bf b}}$ is $C_{\gamma}$-invariant by Lemma \ref{lem:thin}(1). 
Hence, we can apply Proposition \ref{prop:DL2_k[t]} to $p$, and conclude that $p_{\infty}(D_{\gamma, {\bf b}})$ is a cylinder of $\widetilde{B}^{(\gamma)} _{\infty}$, and furthermore, we have 
\[\tag{i}
\operatorname{codim}_{\widetilde{B}^{(\gamma)} _{\infty}} \bigl( p_{\infty}(D_{\gamma, {\bf b}}) \bigr) = 
\operatorname{codim}_{\overline{B}^{(\gamma)} _{\infty}} \left( D_{\gamma, {\bf b}} \right) + b_5 - b_2. 
\]

By the definition of $D_{\gamma, {\bf b}}$, we have 
\[
(\mu _{\gamma} \circ p)_{\infty} \left( D_{\gamma, {\bf b}} \right) \subset 
\operatorname{Cont} ^{b_4} \left( \operatorname{Jac}_{B} \right). 
\]
By Lemma \ref{lem:additive} and Lemma \ref{lem:L3}, 
for any $\alpha \in D_{\gamma, {\bf b}}$, we have 
\begin{align*}
\operatorname{ord}_{p_{\infty} (\alpha)} \left( \operatorname{jac}_{\mu _{\gamma}} \right)
&= \operatorname{ord}_{\alpha} \left( \operatorname{jac}_{\overline{\mu} _{\gamma}} \right) - \operatorname{ord}_{\alpha} \left( \operatorname{jac}_{p} \right)\\
&= \left(\frac{b_6}{r} + \operatorname{age}(\gamma) - w_\gamma({\bf f}) - b_2 \right) - (b_5 - b_2)\\
& = \frac{b_6}{r} - b_5 + \operatorname{age}(\gamma) - w_\gamma({\bf f}). 
\end{align*}
Note that $\mu _{\gamma \infty}$ is injective outside $Z_{\infty}$ by Proposition \ref{prop:DL_X}. 
Since we have $D_{\gamma, {\bf b}} \subset \operatorname{Cont}^{b_7} \left( I_Z \mathcal{O}_{\overline{B}^{(\gamma)}} \right)$ and $b_7 < \infty$, 
we have $(\mu _{\gamma} \circ p)_{\infty}(D_{\gamma, {\bf b}}) \cap Z_{\infty} = \emptyset$. 
Therefore, the restriction map $\mu _{\gamma \infty} |_{p_{\infty} (D_{\gamma, {\bf b}})}$ is injective. 
Hence, we can apply Proposition \ref{prop:EM6.2_k[t]} to $\mu _{\gamma}$, and conclude that $(\mu _{\gamma} \circ p)_{\infty}(D_{\gamma, {\bf b}})$ is a cylinder of $B_{\infty}$, and 
we have 
\begin{align*}\tag{ii}
&\operatorname{codim} _{B_{\infty}} \bigl( (\mu _{\gamma} \circ p)_{\infty}(D_{\gamma, {\bf b}}) \bigr) \\
& = \operatorname{codim} _{\widetilde{B}^{(\gamma)} _{\infty}} \bigl( p_{\infty} (D_{\gamma, {\bf b}}) \bigr) + \frac{b_6}{r} - b_5 + \operatorname{age}(\gamma) - w_\gamma({\bf f}).
\end{align*}
Then, the assertion follows from (i) and (ii). 
\end{proof}

\begin{thm}\label{thm:mld_hyperquot}
Let $\mathfrak{a} \subset  \mathcal{O}_B$ be a non-zero ideal sheaf and $\tau$ a positive real number. 
Then 
\begin{align*}
\operatorname{mld}_x(B,\mathfrak{a}^{\tau}) 
&= \inf _{\gamma \in G, \ b_1, b_2 \in \mathbb{Z}_{\ge 0}}
\left \{ 
\operatorname{codim} \left( C_{\gamma, b_1, b_2} \right) + \operatorname{age}(\gamma)- w_\gamma({\bf f}) - b_2 - \tau b_1
\right \} \\
&= \inf _{\gamma \in G, \ b_1, b_2 \in \mathbb{Z}_{\ge 0}}
\left \{ 
\operatorname{codim} \left( C' _{\gamma, b_1, b_2} \right) + \operatorname{age}(\gamma) - w_\gamma({\bf f}) - b_2 - \tau b_1
\right \}
\end{align*}
holds for 
\begin{align*}
C_{\gamma, b_1, b_2} 
& := 
\operatorname{Cont}^{\ge 1} \left( \mathfrak{m}_x \mathcal{O}_{\overline{B}^{(\gamma)}} \right) \cap
\operatorname{Cont}^{b_1}  \left( \mathfrak{a} \mathcal{O}_{\overline{B}^{(\gamma)}} \right) \cap  
\operatorname{Cont}^{b_2} \left(\operatorname{Jac}_{\overline{B}^{(\gamma)}/k[t]} \right), \\
C'_{\gamma, b_1, b_2}
& := 
\operatorname{Cont}^{\ge 1} \left(\mathfrak{m}_x \mathcal{O}_{\overline{B}^{(\gamma)}} \right) \cap
\operatorname{Cont}^{\ge b_1}  \left(\mathfrak{a} \mathcal{O}_{\overline{B}^{(\gamma)}} \right) \cap 
\operatorname{Cont}^{b_2} \left(\operatorname{Jac}_{\overline{B}^{(\gamma)}/k[t]} \right). 
\end{align*} 
\end{thm}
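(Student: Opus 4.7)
The plan is to combine three ingredients: an EMY-type formula for $\operatorname{mld}_x(B, \mathfrak{a}^\delta)$ expressed via contact loci in $B_\infty$ corrected by the Nash ideal $\mathfrak{n}_{r,B}$; the Denef--Loeser decomposition $B_\infty \setminus Z_\infty = \bigsqcup_{\gamma} \bigl( \overline{\mu}_{\gamma\infty}(\overline{B}^{(\gamma)}_\infty) \setminus Z_\infty \bigr)$ of Proposition \ref{prop:DL_X}(3); and the codimension transfer formula of Lemma \ref{lem:D}. The conceptual key is that the $+\tfrac{b_6}{r}$ appearing in Lemma \ref{lem:D} exactly cancels the $-\tfrac{b_6}{r}$ in the EMY-type formula, so that the surviving correction is precisely $\operatorname{age}(\gamma) - w_\gamma({\bf f}) - b_2$, matching the desired formula. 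This argument closely parallels \cite{NS22}*{Theorem 4.8 and Corollary 4.9}, with the new $w_\gamma({\bf f})$ term absorbed by the modified sheaf $L_4$ of Section \ref{section:order}.

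First, by the EMY-type formula (\cite{EMY03}) applied to the klt $\mathbb{Q}$-Gorenstein variety $B$ at $x$,
\[
\operatorname{mld}_x(B, \mathfrak{a}^\delta) = \inf_{b_1, b_6 \ge 0} \left\{ \operatorname{codim}_{B_\infty}(E_{b_1, b_6}) - \delta b_1 - \tfrac{b_6}{r} \right\},
\]
where $E_{b_1, b_6} := \operatorname{Cont}^{\ge 1}(\mathfrak{m}_x) \cap \operatorname{Cont}^{b_1}(\mathfrak{a}) \cap \operatorname{Cont}^{b_6}(\mathfrak{n}_{r,B}) \subset B_\infty$. The assumption $\operatorname{codim}_B(B \cap Z) \ge 2$ makes $(B \cap Z)_\infty$ a thin subset of $B_\infty$, so Proposition \ref{prop:DL_X}(3) together with Proposition \ref{prop:negligible} lets us replace each $E_{b_1, b_6}$ by the disjoint union of the $\overline{\mu}_{\gamma\infty}$-images of its preimages $\overline{\mu}_{\gamma\infty}^{-1}(E_{b_1, b_6}) \subset \overline{B}^{(\gamma)}_\infty$. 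Stratifying these preimages further by the orders of $\operatorname{Jac}_{\overline{B}^{(\gamma)}/k[t]}$, $\operatorname{Jac}_{\widetilde{B}^{(\gamma)}/k[t]} \mathcal{O}_{\overline{B}^{(\gamma)}}$, $\operatorname{Jac}_B \mathcal{O}_{\overline{B}^{(\gamma)}}$, $\mathfrak{n}'_p$, and $I_Z \mathcal{O}_{\overline{B}^{(\gamma)}}$ yields a disjoint decomposition into the cylinders $D_{\gamma,{\bf b}}$ of Lemma \ref{lem:D}; the loci where any of these ideals vanishes identically are thin by Lemma \ref{lem:thin} (with Lemmas \ref{lem:L3} and \ref{lem:L5} handling the $\mathfrak{n}'_p$ case), hence negligible by Proposition \ref{prop:negligible}.

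Applying Lemma \ref{lem:D} to each $D_{\gamma,{\bf b}}$ transforms the expression into
\[
\operatorname{mld}_x(B, \mathfrak{a}^\delta) = \inf_{\gamma, {\bf b}} \left\{ \operatorname{codim}_{\overline{B}^{(\gamma)}_\infty}(D_{\gamma,{\bf b}}) + \operatorname{age}(\gamma) - w_\gamma({\bf f}) - b_2 - \delta b_1 \right\},
\]
in which $b_3, b_4, b_5, b_6, b_7$ appear only through $D_{\gamma,{\bf b}}$. Taking the infimum over these five variables: up to thin sets, $\bigsqcup_{b_3, b_4, b_5, b_6, b_7} D_{\gamma,{\bf b}} = C_{\gamma, b_1, b_2}$, so Proposition \ref{prop:negligible} gives $\min_{b_3,\ldots,b_7} \operatorname{codim}(D_{\gamma,{\bf b}}) = \operatorname{codim}(C_{\gamma, b_1, b_2})$, which establishes the first equality. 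The second equality is then formal: $C'_{\gamma, b_1, b_2} = \bigsqcup_{b \ge b_1} C_{\gamma, b, b_2}$ by definition, so Proposition \ref{prop:negligible} gives $\operatorname{codim}(C'_{\gamma, b_1, b_2}) = \min_{b \ge b_1} \operatorname{codim}(C_{\gamma, b, b_2})$, and the hypothesis $\delta > 0$ forces this inner minimum to be realized at $b = b_1$ once the outer infimum over $b_1$ is taken.

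The main obstacle is the $\tfrac{b_6}{r}$-cancellation itself, which is precisely the purpose of the new sheaf $L_4$ and Lemmas \ref{lem:L2}, \ref{lem:L3}, \ref{lem:L5} of Section \ref{section:order}; these identities are what makes Lemma \ref{lem:D} come out with the correct correction term. Once those identities are in hand, the remaining difficulty is the bookkeeping of the seven exponents ${\bf b}$ and verifying the thinness of the various singular loci; here the $\mathfrak{n}'_p$ case of Lemma \ref{lem:thin}(2) is indispensable, and it in turn rests on the Jacobian additivity of Lemma \ref{lem:additive} combined with Lemmas \ref{lem:L3} and \ref{lem:L5}.
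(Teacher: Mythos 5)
Your proposal is correct and follows essentially the same route as the paper's proof: the EMY-type formula via $\mathfrak{n}_{r,B}$, the Denef--Loeser decomposition of Proposition \ref{prop:DL_X}(3) combined with Proposition \ref{prop:negligible} and Lemma \ref{lem:thin}, the stratification into the cylinders $D_{\gamma,{\bf b}}$, the codimension transfer of Lemma \ref{lem:D} with the $\tfrac{b_6}{r}$-cancellation, and the formal passage from $\operatorname{Cont}^{b_1}$ to $\operatorname{Cont}^{\ge b_1}$ using $\delta>0$ (which the paper delegates to the argument of \cite{NS22}*{Corollary 4.9}). No gaps.
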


\begin{proof}
For $\gamma \in G$ and ${\bf b} := (b_1, \ldots, b_7) \in \mathbb{Z}_{\ge 0} ^7$, 
we define the cylinder $D_{\gamma, {\bf b}} \subset \overline{B}^{(\gamma)}_{\infty}$ as in Lemma \ref{lem:D}. 

By \cite{EM09}*{Theorem 7.4}, we have
\[
\operatorname{mld}_x(B,\mathfrak{a}^{\tau})=
\inf _{b_1, b_6 \in \mathbb{Z}_{\ge 0}} 
\left\{\hspace{-1mm}  \begin{array}{c} 
\operatorname{codim} \left( 
\operatorname{Cont}^{\ge 1}(\mathfrak m_x)
\cap \operatorname{Cont}^{b_1} (\mathfrak a) 
\cap \operatorname{Cont}^{b_6}(\mathfrak n_{r,B}) 
\right) \\
-\frac{b_6}{r} - \tau b_1
\end{array}
\right\}.
\]
Here, by Propositions \ref{prop:DL_X}, we have
\begin{align*}
&\operatorname{Cont}^{\ge 1}(\mathfrak m_x) \cap
\operatorname{Cont}^{b_1}(\mathfrak a) \cap 
\operatorname{Cont}^{b_6}(\mathfrak n_{r,B})  \setminus Z_\infty \\
&{}= \bigsqcup _{\langle \gamma \rangle \in \operatorname{Conj}(G)} 
\overline{\mu}_{\gamma \infty} \left( 
\operatorname{Cont}^{\ge 1} \left( \mathfrak{m}_x \mathcal{O}_{\overline{B}^{(\gamma)}} \right) \cap
\operatorname{Cont}^{b_1} \left( \mathfrak{a} \mathcal{O}_{\overline{B}^{(\gamma)}} \right) \cap 
\operatorname{Cont}^{b_6} \left( \mathfrak{n}_{r,B} \mathcal{O}_{\overline{B}^{(\gamma)}} \right) \right) \setminus Z_\infty. 
\end{align*}
Hence, by Proposition \ref{prop:negligible} and Lemma \ref{lem:thin}(3), we have 
\begin{align*}
& \operatorname{codim} \left( \operatorname{Cont}^{\ge 1}(\mathfrak m_x) \cap
\operatorname{Cont}^{b_1}(\mathfrak a) \cap 
\operatorname{Cont}^{b_6}(\mathfrak n_{r,B}) \right) \\
& =
\min_{\gamma \in G,\ b_2, b_3, b_4, b_5, b_7 \in \mathbb{Z}_{\ge 0}} \operatorname{codim} 
\left( \overline{\mu}_{\gamma \infty} ( D_{\gamma, (b_1, \ldots , b_7)} ) \right). 
\end{align*}
 On the other hand, again by Proposition \ref{prop:negligible} and Lemma \ref{lem:thin}(2), we have 
\[
\operatorname{codim} (C_{\gamma, b_1 , b_2}) = 
\min_{b_3, b_4, b_5, b_6, b_7 \in \mathbb{Z}_{\ge 0}} \operatorname{codim} \bigl( D_{\gamma, (b_1, \ldots , b_7)} \bigr).
\]
Therefore, by Lemma \ref{lem:D}, we have
\begin{align*}
&\operatorname{mld}_x(B,\mathfrak{a}^{\tau})\\
&=
\inf _{b_1, b_6} \left\{ 
\operatorname{codim}\left( \operatorname{Cont}^{\ge 1}(\mathfrak m_x)
\cap \operatorname{Cont}^{b_1} (\mathfrak a) 
\cap \operatorname{Cont}^{b_6}(\mathfrak n_{r,B})  \right)
-\frac{b_6}{r}- \tau b_1
 \right\} \\
&= \inf _{\gamma, {\bf b}}
\left\{ 
\operatorname{codim}\bigl( \overline{\mu}_{\gamma \infty} (D_{\gamma, {\bf b}}) \bigr) 
-\frac{b_6}{r}- \tau b_1
 \right\} \\
&= \inf _{\gamma, {\bf b}}
\bigl \{ 
\operatorname{codim}( D_{\gamma, {\bf b}} ) 
+ \operatorname{age}(\gamma) - w_\gamma({\bf f})  - b_2 - \tau b_1
 \bigr \} \\ 
&=\inf _{\gamma, b_1, b_2}
\bigl \{
\operatorname{codim} (C_{\gamma, b_1, b_2}) + \operatorname{age}(\gamma) - w_\gamma({\bf f})  - b_2 - \tau b_1
\bigr \}, 
\end{align*}
which proves the first equality.

The second equality follows from the same arguments as in the proof of \cite{NS22}*{Corollary 4.9}. 
\end{proof}

\begin{rmk}\label{rmk:multi_index}
The formulas in Theorem \ref{thm:mld_hyperquot} can be easily extended to $\mathbb{R}$-ideals.
For an $\mathbb{R}$-ideal sheaf $\mathfrak{a} = \prod _{i=1} ^s \mathfrak{a}_i ^{\tau _i}$ on $B$, 
we have 
\begin{align*}
&\operatorname{mld}_x \left( B, \mathfrak{a} \right) \\
&= 
\inf _{\gamma \in G, w_1, \ldots, w_s, b \in \mathbb{Z}_{\ge 0}}
\left \{ 
\operatorname{codim} \left( C_{\gamma, w_1, \ldots , w_s, b} \right) + \operatorname{age}(\gamma) - w_\gamma({\bf f}) - b - \sum _{i=1} ^s \tau _i w_i
\right \} \\
&=
\inf _{\gamma \in G, w_1, \ldots, w_s, b \in \mathbb{Z}_{\ge 0}}
\left \{ 
\operatorname{codim} \left( C'_{\gamma, w_1, \ldots , w_s, b} \right) + \operatorname{age}(\gamma) - w_\gamma({\bf f}) - b - \sum _{i=1} ^s \tau _i w_i
\right \}
\end{align*}
for 
\begin{align*}
C_{\gamma, w_1, \ldots , w_s, b} & := 
\operatorname{Cont}^{\ge 1} \left( \mathfrak{m}_x \mathcal{O}_{\overline{B}^{(\gamma)}} \right) \cap
\biggl( \bigcap _{i=1} ^{s} \operatorname{Cont}^{w_i} \left(\mathfrak{a}_i \mathcal{O}_{\overline{B}^{(\gamma)}}\right) \biggr) 
 \cap \operatorname{Cont}^{b} \left( \operatorname{Jac}_{\overline{B}^{(\gamma)}/k[t]} \right), \\
C' _{\gamma, w_1, \ldots , w_s, b} & := 
\operatorname{Cont}^{\ge 1} \left( \mathfrak{m}_x \mathcal{O}_{\overline{B}^{(\gamma)}} \right) \cap
\biggl( \bigcap _{i=1} ^{s} \operatorname{Cont}^{\ge w_i}  \left( \mathfrak{a}_i \mathcal{O}_{\overline{B}^{(\gamma)}} \right) \biggr) 
 \cap \operatorname{Cont}^{b} \left( \operatorname{Jac}_{\overline{B}^{(\gamma)}/k[t]} \right). 
\end{align*}
\end{rmk}

\section{PIA formula for hyperquotient singularities}\label{section:PIA}
We keep the notation from Sections \ref{section:setting}, \ref{section:order} and \ref{section:mld}. 
For $1 \le i \le c$, we set 
\[
\overline{H}_i := \operatorname{Spec} \left( R/(f_i) \right) \subset \overline{A}, \qquad 
H_i := \overline{H}_i/G \subset A.
\]

\begin{lem}\label{lem:property B_i}
Let $q:\overline{A} \to A$ denote the quotient map. 
Then the following assertions hold. 
\begin{enumerate}
\item
$\overline{H}_i$ and $H_i$  are prime Weil divisors on some open neighborhood of $\overline{B}$ in $\overline{A}$ and some open neighborhood of $B$ in $A$, respectively.

\item 
We have $d H_i = \operatorname{div} \left( f_i ^d \right)$ on some open neighborhood of $B$ for each $1 \le i \le c$. 
In particular, $H_i$ is a $\mathbb Q$-Cartier divisor on some open neighborhood of $B$, and we have $q^*(H_i)=\overline{H}_i$ on some open neighborhood of $\overline{B}$.

\item 
We have
$(K_A + H_1 + \cdots + H_c) |_B = K_B$. 
\end{enumerate}
\end{lem}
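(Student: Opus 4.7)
The plan is to deduce (1) from unique factorization in $R$ together with the regularity of the local ring $R_{I_{\overline{B}}}$, then to combine (1) with a ramification computation for (2), and to prove (3) by adjunction upstairs followed by descent through $q$.

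For (1), I would factorize $f_i = c_0 \prod_j p_j^{a_j}$ with distinct primes $p_j$ in the UFD $R$. Since $\overline{B}$ is normal (in particular irreducible), $R_{I_{\overline{B}}}$ is a regular local ring of dimension $c$ whose maximal ideal $\mathfrak m$ is minimally generated by $f_1,\dots,f_c$, so $f_i \in \mathfrak m\setminus\mathfrak m^2$. Prime factors $p_j \notin I_{\overline{B}}$ become units in this localization, while each $p_j \in I_{\overline{B}}$ lies in $\mathfrak m$ and contributes at least $a_j$ to the order of $f_i$ in $\mathfrak m$; the condition $f_i\notin\mathfrak m^2$ therefore forces exactly one prime factor $p_1$ to satisfy $V(p_1)\supset\overline{B}$, with exponent $a_1=1$. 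On $U:=\overline{A}\setminus\bigcup_{j\neq 1}V(p_j)$, the ideal $(f_i)$ coincides with the prime ideal $(p_1)$, so $\overline{H}_i|_U$ is a prime Weil divisor. The $G$-semi-invariance of $f_i$ forces $(f_i)$ to be $G$-stable, and the uniqueness of $p_1$ then yields $G$-invariance of $V(p_1)$, so $H_i=V(p_1)/G$ is a prime Weil divisor on $q(U)\subset A$.

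For (2), note that $\gamma(f_i)=\xi^a f_i$ with $a\in\mathbb Z$ gives $\gamma(f_i^d)=\xi^{ad}f_i^d=f_i^d$ (as $\xi^d=1$), so $f_i^d\in R^G$ and $\operatorname{div}_A(f_i^d)$ is a principal Weil divisor on $A$ supported on $H_i$; hence $\operatorname{div}_A(f_i^d)=mH_i$ for some positive integer $m$. By (1) we have $\operatorname{div}_{\overline{A}}(f_i^d)=d\overline{H}_i$. Since $\overline{H}_i$ has codimension one while $q^{-1}(Z)$ has codimension at least two, the generic point of $\overline{H}_i$ lies in the \'etale locus of $q$, so the ramification index of $q$ along $\overline{H}_i$ equals $1$, whence $q^*H_i=\overline{H}_i$. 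Comparing $m\overline{H}_i=q^*(mH_i)=d\overline{H}_i$ yields $m=d$.

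For (3), on the open neighborhood produced by (1), $\overline{B}=\overline{H}_1\cap\cdots\cap\overline{H}_c$ is cut out by the regular sequence $f_1,\dots,f_c$ of defining equations of the Cartier divisors $\overline{H}_i$, and iterated adjunction yields $K_{\overline{B}}=(K_{\overline{A}}+\overline{H}_1+\cdots+\overline{H}_c)|_{\overline{B}}$. By the codimension-two assumptions on $Z$ and on $B\cap Z$, together with Lemma \ref{lem:etaleness of B}(1), both $q:\overline{A}\to A$ and $\overline{B}\to B$ are \'etale in codimension one, giving $q^*K_A=K_{\overline{A}}$ and $(q|_{\overline{B}})^*K_B=K_{\overline{B}}$ as Weil divisor classes. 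Combined with $q^*H_i=\overline{H}_i$ from (2), pulling back the desired identity reduces it to the adjunction formula on $\overline{A}$ just established, and the injectivity of $q^*$ on Weil divisor classes outside the codimension-two locus closes the argument. The main subtlety I expect is interpretive: the open neighborhood produced by (1) generally covers $\overline{B}$ only in codimension one, since other prime factors of $f_i$ may still meet $\overline{B}$ in proper closed subsets; however, because (2) and (3) assert identities of ($\mathbb Q$-Cartier) Weil divisors, such a neighborhood suffices for all downstream uses.
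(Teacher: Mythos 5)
Your treatments of (2) and (3) are essentially sound \emph{given} (1): part (2) matches the paper's ramification computation at the generic point of $H_i$, and part (3) performs adjunction upstairs on $\overline{A}$ and descends through the codimension-one-\'{e}tale quotient, a legitimate variant of the paper's argument, which instead works downstairs at closed points of $B_{\rm sm}\setminus Z$ and extends over a codimension-two locus. The problem is your part (1), and it is a genuine gap rather than the harmless ``interpretive subtlety'' you describe.

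Localizing at the generic point of $\overline{B}$, as you do via the regular local ring $R_{I_{\overline{B}}}$, only shows that exactly one prime factor $p_1$ of $f_i$ satisfies $V(p_1)\supset\overline{B}$, with exponent one. This makes $(f_i)$ prime on $U=\overline{A}\setminus\bigcup_{j\ne 1}V(p_j)$, but $U$ need not contain $\overline{B}$: a priori some $V(p_j)$ with $j\ne 1$ could meet $\overline{B}$ in a proper closed subset, even one passing through the origin. The weaker conclusion does \emph{not} suffice downstream. In the proof of Theorem \ref{thm:PIA}, the identity $\operatorname{mld}_x\left(A,H_1+\cdots+H_c,\mathfrak{a}^{\delta}\right)=\operatorname{mld}_x\left(A,(f_1^d\cdots f_c^d)^{1/d}\mathfrak{a}^{\delta}\right)$ is taken at a \emph{closed} point $x\in B$, and it requires $dH_i=\operatorname{div}\left(f_i^d\right)$ on a full neighborhood of $x$: a divisor $E$ over $A$ with center $\{x\}$ has positive order along every component of $\operatorname{div}(f_i^d)$ through $x$, so any extra component would change the log discrepancy. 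You therefore must prove that no other prime factor of $f_i$ meets $\overline{B}$ at all. This is exactly what the paper's Lemma \ref{lem:domain} accomplishes: for every closed point $x\in\overline{B}$ the local ring $\mathcal{O}_{\overline{B},x}=\mathcal{O}_{\overline{A},x}/(f_1,\ldots,f_c)$ is a domain, and since $f_1,\ldots,f_c$ is a regular sequence one removes the equations $f_j$ ($j\ne i$) one at a time, applying ``$S/(f)$ a domain and $f$ a nonzerodivisor imply $S$ a domain'' at each step, to conclude that $(f_i)\mathcal{O}_{\overline{A},x}$ is prime at \emph{every} closed point of $\overline{B}$. Since $\overline{B}$ is irreducible it lies in a single $V(p_{j_0})$, and this pointwise primality then forces the remaining $V(p_j)$ to be disjoint from $\overline{B}$ and forces the exponent of $p_{j_0}$ to be one, yielding the honest open neighborhood that the lemma asserts and that (2), (3), and Theorem \ref{thm:PIA} actually use. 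This local-at-every-point argument is the idea your proposal is missing.
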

\begin{proof}
Note that $\overline{B}$ is normal by Lemma \ref{lem:etaleness of B}.
For any closed point $x \in \overline{B}$, since $\mathcal O_{\overline{B}, x}$ is a domain, $\mathcal O_{\overline{H}_i, x}$ is also a domain by Lemma \ref{lem:domain} below.
This implies that $(f_i) \mathcal{O}_{\overline{A}, x} \cap \mathcal O_{A, y}$ is a prime ideal for the closed point $y = q(x) \in B$,
which completes the proof of (1). 

Let $x \in \overline{B}$ be a closed point, and let $y := q(x) \in B$. 
We define an ideal $I_{H_i} \subset \mathcal{O}_{A,y}$ by
\[
I_{H_i} := (f_i) \mathcal{O}_{\overline{A}, x} \cap \mathcal{O}_{A,y}. 
\]
By (1), $I_{H_i}$ is a prime ideal. 
Since $f_i$ is $G$-semi-invariant, we have $I_{H_i}^d \subset \left( f_i ^d \right) \subset I_{H_i}$. 
In order to prove $\operatorname{div} \left( f_i ^d \right) = d H_i$ on some open neighborhood of $y$, 
it is sufficient to show 
\[
\bigl( f^d_i \bigr) (\mathcal{O}_{A,y})_{I_{H_i}} =
I_{H_i} ^d (\mathcal{O}_{A,y})_{I_{H_i}}, 
\] 
where $(\mathcal{O}_{A,y})_{I_{H_i}}$ denotes the localization of $\mathcal{O}_{A,y}$ at the prime ideal $I_{H_i}$. 
Since $\overline{A} \to A$ is  \'{e}tale in codimension one, 
the extension of discrete valuation rings 
$(\mathcal{O}_{A,y})_{I_{H_i}} \hookrightarrow (\mathcal{O}_{\overline{A}, x})_{( f_i )}$
is unramified, which implies
\[
( f_i ) (\mathcal{O}_{\overline{A}, x})_{( f_i )} 
= I_{H_i} (\mathcal{O}_{\overline{A}, x})_{( f_i )}. 
\]
Here, $(\mathcal{O}_{\overline{A}, x})_{( f_i )}$ denotes the localization of $\mathcal{O}_{\overline{A}, x}$ at the prime ideal generated by $f_i$. 
Therefore, we have
\begin{align*}
\bigl( f^d_i \bigr) (\mathcal{O}_{A,y})_{I_{H_i}}
&= 
\bigl( f_i \bigr) ^d  (\mathcal{O}_{\overline{A}, x})_{( f_i )} \cap (\mathcal{O}_{A,y})_{I_{H_i}} \\
&= 
I_{H_i} ^d  (\mathcal{O}_{\overline{A}, x})_{( f_i )} \cap (\mathcal{O}_{A,y})_{I_{H_i}} \\
&=
I_{H_i} ^d (\mathcal{O}_{A,y})_{I_{H_i}}, 
\end{align*}
which completes the proof of (2).

Let $x \in B_{\rm sm} \setminus Z$ be a closed point.
Take a closed point $y \in \overline{B}$ whose image in $B$ is $x$.
For $1 \le i \le c$, we define 
\[
\overline{B}_i := \overline{H}_1\cap \cdots \cap \overline{H}_i, \qquad 
B_i := H_1\cap \cdots \cap H_i. 
\]
Then, we claim the following assertions: 
\begin{enumerate}
\item[(i)] 
For each $i$, we have $q^{-1}(H_i) = \overline{H}_i$ at $y$. 

\item[(ii)] 
For each $i$, $\overline{B}_i\to B_i$ is \'{e}tale at $y$. 

\item[(iii)] 
For each $i$, $y$ is a smooth point of $\overline{B}$. 

\item[(iv)]
For each $i$, $y$ is a smooth point of $\overline{B}_i$. 

\item[(v)]
For each $i$, $x$ is a smooth point of $B_i$. 

\item[(vi)]
We have $B_c = B$ at $x$.  
\end{enumerate}
(i) follows from (1) by the same argument as in the proof of Lemma \ref{lem:etaleness of B}(1). 
By (i), we have $q^{-1}(B_i) = \overline{B}_i$ at $y$, which proves (ii). 
(iii) follows since $x$ is a smooth point of $B$ and $\overline{B}\to B$ is \'{e}tale at $y$. 
Since $f_1, \ldots, f_c$ is a regular sequence, (iv) follows from (iii) (cf.\ \cite{Sta}*{tag 00NU}). (v) follows from (ii) and (iv). 
(vi) follows because both $\overline{B}\to B_c$ and $\overline{B}\to B$ are \'{e}tale at $y$, and hence the closed immersion $B \to B_c$ is also \'{e}tale at $y$.

By (v), we have $(K_{B_i}+B_{i+1})|_{B_{i+1}}=K_{B_{i+1}}$ on some open neighborhood of $x$ on $B_{i+1}$.
By (vi) and induction, we obtain $(K_A+H_1+\cdots +H_c)|_B = K_B$ on some open neighborhood of $x$ on $B$.
By moving a closed point $x$ in $B_{\rm sm}\setminus Z$, 
we conclude that $(K_A+H_1+\cdots +H_c)|_B=K_B$ on $B_{\rm sm}\setminus Z$. 
Since $B$ is normal and $\operatorname{codim} _B (B \cap Z) \ge 2$, we have $\operatorname{codim} _B(B_{\rm sing}\cup (B \cap Z))\ge 2$.
Hence, we have $(K_A+H_1+\cdots +H_c)|_B=K_B$ on $B$.
\end{proof}

\begin{lem}\label{lem:domain}
Let $(S,\mathfrak m)$ be a Noetherian local ring and let $f\in S$.
Suppose that $f$ is not a zero divisor of $S$.
If $S/(f)$ is a domain, then $S$ is a domain.
\end{lem}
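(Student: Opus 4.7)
The plan is to show that any relation $ab=0$ in $S$ forces $a=0$ or $b=0$ by forcing one of them to lie in $\bigcap_{n\ge 1}(f^n)$, which vanishes by the Krull intersection theorem since $f\in\mathfrak{m}$. Throughout, the two inputs I will exploit are: (i) $f$ is a non-zero-divisor, so dividing by $f$ is an injective operation, and (ii) $S/(f)$ being a domain means that whenever $\bar{a}\bar{b}=0$ in $S/(f)$, at least one of $\bar a,\bar b$ vanishes, i.e.\ one of $a,b$ lies in $(f)$.

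First I would observe that $S/(f)$ being a domain is in particular nonzero, so $f$ is not a unit, hence $f\in\mathfrak{m}$ and $\bigcap_{n\ge 1}(f^n)\subset\bigcap_{n\ge 1}\mathfrak{m}^n=0$ by Krull's intersection theorem (which applies since $S$ is Noetherian and local). Next, suppose $ab=0$ with $b\ne 0$; I want to deduce $a=0$. By Krull's theorem applied to $b$, there is a largest $n\ge 0$ with $b\in(f^n)$; write $b=f^n b'$ with $b'\notin(f)$. Since $f$ is a non-zero-divisor, multiplying out $ab=af^n b'=0$ and canceling $f^n$ yields $ab'=0$.

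Now the main step: since $b'\notin(f)$, the class $\bar{b'}\in S/(f)$ is nonzero; from $\bar a\,\bar{b'}=0$ and the domain hypothesis on $S/(f)$, we conclude $a\in(f)$. Write $a=f a_1$; then $f a_1 b'=0$ and again, because $f$ is a non-zero-divisor, $a_1 b'=0$. Iterating this reasoning with $a_1$ in place of $a$ (note $\bar{b'}\ne 0$ remains available throughout, so each step produces $a_i\in(f)$), I obtain by induction that $a\in(f^k)$ for every $k\ge 1$. Therefore $a\in\bigcap_k(f^k)=0$, so $a=0$, and $S$ is a domain.

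There isn't really a hard step here; the only subtle point is arranging the iteration correctly. The natural temptation is to alternately strip factors of $f$ from $a$ and from $b$, which can loop indefinitely. The trick I use is to first extract all $f$-factors from $b$ once and for all (possible by Krull), reducing to a partner $b'$ that is \emph{not} in $(f)$; after that, the domain condition on $S/(f)$ lets me strip $f$-factors from $a$ unboundedly, and Krull finishes the job.
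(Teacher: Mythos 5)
Your proof is correct and follows essentially the same route as the paper's: use the primality of $(f)$ to strip factors of $f$ from one of the two elements, cancel those factors using that $f$ is a non-zero-divisor, and conclude with Krull's intersection theorem. Your preprocessing step---extracting all $f$-factors from $b$ once and for all so that the iteration only ever divides $a$---cleanly resolves the alternation issue that the paper's ``repeating the same argument'' handles only implicitly (by pigeonhole on which side receives the factor), and you also make explicit that $f\in\mathfrak m$ because $S/(f)\neq 0$.
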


\begin{proof}
Suppose that $x,y\in S$ satisfy $xy=0$.
Since $xy = 0 \in (f)$ and $(f)$ is a prime ideal, we have $x\in (f)$ or $y\in (f)$. 
Therefore, we can find $x_1 \in S$ or $y_1 \in S$ satisfying $x=fx_1$ or $y=fy_1$.
Since $f$ is not a zero divisor, we have $x_1y=0$ or $xy_1=0$.
Repeating the same argument, we have $x\in \bigcap _{m\in\mathbb N} (f)^m$ or $y\in \bigcap_{m\in\mathbb N}(f)^m$.
Since $\bigcap_{m\in\mathbb N} (f)^m\subset\bigcap_{m\in\mathbb N} \mathfrak m^m=0$ by \cite{Mat89}*{Theorem 8.9}, 
we conclude that $x=0$ or $y=0$, which implies that $S$ is a domain.
\end{proof}

\begin{lem}\label{lem:zure}
Let $\mathfrak{a} \subset \mathcal{O}_A$ be a non-zero ideal sheaf on $A$. 
Let $\tau$ be a non-negative real number. 
Let $x \in A$ be the origin. 
Then, we have
\begin{align*}
&\operatorname{mld}_x \left( A, (f_1 ^d \cdots f_c ^d )^{\frac{1}{d}} \mathfrak{a} ^{\tau} \right) \\
&= \inf _{\gamma \in G, b_1, b_2 \in \mathbb{Z}_{\ge 0}}
\left \{ 
\operatorname{codim} (C_{\gamma, b_1, b_2}) + \operatorname{age}(\gamma) - w_{\gamma} ({\bf f}) - b_1 - \tau b_2 
\right \},
\end{align*}
where $C_{\gamma, b_1, b_2} \subset \overline{A}^{(\gamma)} _{\infty}$ is defined by
\begin{align*}
C_{\gamma, b_1, b_2} := {}
&\operatorname{Cont}^{\ge 1} \left(\mathfrak{m}_x \mathcal{O}_{\overline{A}^{(\gamma)}} \right)
\cap
\operatorname{Cont}^{\ge b_2} \left( \mathfrak{a} \mathcal{O}_{\overline{A}^{(\gamma)}} \right) \\
&\cap 
\operatorname{Cont}^{\ge b_1} \left( \lambda ^* _{\gamma}(f_1)t^{-w_\gamma(f_1)}\cdots \lambda ^* _{\gamma}(f_c)t^{-w_\gamma(f_c)} \right). 
\end{align*} 
\end{lem}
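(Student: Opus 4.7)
The plan is to reduce to the minimal log discrepancy formula for the quotient singularity $A$ itself, which is the degenerate case of Theorem \ref{thm:mld_hyperquot} (more precisely, of Remark \ref{rmk:multi_index}) where the subvariety is absent, and then to convert the contact locus for $(f_1^d \cdots f_c^d)$ into the one for the product $\lambda^*_\gamma(f_1) t^{-w_\gamma(f_1)} \cdots \lambda^*_\gamma(f_c) t^{-w_\gamma(f_c)}$ by a short reparametrization argument using the semi-invariance of the $f_i$.

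First, because $f_i$ is $G$-semi-invariant, $\alpha(f_i^d) = \xi^{d a_i} f_i^d = f_i^d$ for every $\alpha \in G$, so $f_i^d \in R^G$ and $\mathfrak{a}_1 := (f_1^d \cdots f_c^d)$ is a genuine ideal on $A$. I would then apply Remark \ref{rmk:multi_index} in the degenerate setup $\overline{B}_{\mathrm{new}} := \overline{A}$, $B_{\mathrm{new}} := A$, $c_{\mathrm{new}} = 0$, with the two-factor $\mathbb{R}$-ideal $\mathfrak{a}_1^{1/d} \mathfrak{a}^\delta$. The hypotheses hold since $A$ is klt (as $G$ has no pseudo-reflection) and $\operatorname{codim}_A Z \ge 2$. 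In this case $\overline{A}^{(\gamma)} = \operatorname{Spec} R[t]$ is smooth over $k[t]$, so $\operatorname{Jac}_{\overline{A}^{(\gamma)}/k[t]} = \mathcal{O}$ and only the index $b = 0$ contributes; the term $w_\gamma({\bf f}_{\mathrm{new}})$ vanishes as the defining sequence is empty. This yields
\[
\operatorname{mld}_x \bigl( A, \mathfrak{a}_1^{1/d} \mathfrak{a}^\delta \bigr) = \inf_{\gamma, w_1, w_2} \Bigl\{ \operatorname{codim}(D_{\gamma, w_1, w_2}) + \operatorname{age}(\gamma) - \tfrac{w_1}{d} - \delta w_2 \Bigr\},
\]
where $D_{\gamma, w_1, w_2}$ is the intersection of $\operatorname{Cont}^{\ge 1}(\mathfrak{m}_x \mathcal{O}_{\overline{A}^{(\gamma)}})$, $\operatorname{Cont}^{\ge w_1}(\mathfrak{a}_1 \mathcal{O}_{\overline{A}^{(\gamma)}})$, and $\operatorname{Cont}^{\ge w_2}(\mathfrak{a} \mathcal{O}_{\overline{A}^{(\gamma)}})$.

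Next, setting $g_i := \lambda^*_\gamma(f_i) t^{-w_\gamma(f_i)} \in R[t]$, the identity $\lambda^*_\gamma(f_i^d) = (\lambda^*_\gamma f_i)^d = t^{d w_\gamma(f_i)} g_i^d$ holds in $R[t]$ because $f_i^d \in R^G$ and $\lambda^*_\gamma$ is a ring homomorphism. Consequently $\mathfrak{a}_1 \mathcal{O}_{\overline{A}^{(\gamma)}} = \bigl( t^{d w_\gamma({\bf f})} (g_1 \cdots g_c)^d \bigr)$, and for any arc $\alpha \in \overline{A}^{(\gamma)}_\infty$,
\[
\operatorname{ord}_\alpha \bigl( \mathfrak{a}_1 \mathcal{O}_{\overline{A}^{(\gamma)}} \bigr) = d\, w_\gamma({\bf f}) + d \cdot \operatorname{ord}_\alpha (g_1 \cdots g_c).
\]
Here $d\, w_\gamma({\bf f}) \in \mathbb{Z}_{\ge 0}$ since each $d\, w_\gamma(f_i) \in \{0, \ldots, d-1\}$, so the right-hand side is an integer.

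Finally, I would reparametrize the infimum. For each fixed $b_1 \in \mathbb{Z}_{\ge 0}$, the set of $w_1 \in \mathbb{Z}_{\ge 0}$ satisfying $\operatorname{Cont}^{\ge w_1}(\mathfrak{a}_1 \mathcal{O}_{\overline{A}^{(\gamma)}}) = \operatorname{Cont}^{\ge b_1}(g_1 \cdots g_c)$ is an interval of integers whose maximum is $w_1 = d b_1 + d\, w_\gamma({\bf f})$. On this interval the codimension term is constant while $-w_1/d$ attains its minimum $-(b_1 + w_\gamma({\bf f}))$ at the right endpoint. Therefore the infimum over $w_1 \in \mathbb{Z}_{\ge 0}$ equals the infimum over $b_1 \in \mathbb{Z}_{\ge 0}$ evaluated at $w_1 = d b_1 + d\, w_\gamma({\bf f})$; renaming $b_2 := w_2$ then gives the stated equality. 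The work is essentially bookkeeping once the ambient quotient-singularity formula is applied; the only mildly delicate point is this short monotonicity-reparametrization, which rests on the fact that $d\, w_\gamma({\bf f})$ is an integer.
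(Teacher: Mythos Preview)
Your proof is correct and follows essentially the same route as the paper. The paper cites \cite{NS22}*{Corollary 4.9} for the ambient formula, whereas you invoke the degenerate case $c=0$ of Remark~\ref{rmk:multi_index}; these give the same expression, and your reparametrization via $w_1 = d b_1 + d\,w_\gamma({\bf f})$ is exactly the substitution the paper uses (the paper phrases it as two separate inequalities rather than a single monotonicity step, but the content is identical).
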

\begin{proof}
By \cite{NS22}*{Corollary 4.9}, we have 
\begin{align*}\tag{i}
&\operatorname{mld}_x \left( A, (f_1 ^d \cdots f_c ^d )^{\frac{1}{d}} \mathfrak{a} ^{\tau} \right)\\
&= \inf _{\gamma \in G, b_0, b_2 \in \mathbb{Z}_{\ge 0}}
\left \{ 
\operatorname{codim} 
\left (D_{\gamma, b_0, b_2} \right) + \operatorname{age}(\gamma) -\frac{b_0}{d} - \tau b_2
\right \},
\end{align*}
where $D_{\gamma, b_0, b_2} \subset \overline{A}^{(\gamma)} _{\infty}$ denotes
\[
D_{\gamma, b_0, b_2} := 
\operatorname{Cont}^{\ge 1} \left( \mathfrak{m}_x \mathcal{O}_{\overline{A}^{(\gamma)}} \right) 
\cap
\operatorname{Cont}^{\ge b_2} \left( \mathfrak{a} \mathcal{O}_{\overline{A}^{(\gamma)}} \right) 
\cap
\operatorname{Cont}^{\ge b_0} \left( \lambda^* _{\gamma} \left( f_1 ^d \cdots f_c ^d \right) \right). 
\]

We fix $\gamma \in G$ and $b_2 \in \mathbb{Z}_{\ge 0}$. 
Note that 
\[
\lambda^* _{\gamma} \left( f_1 ^d \cdots f_c ^d \right) = 
t^{d w_{\gamma}({\bf f})} \left( \lambda ^* _{\gamma} (f_1) t^{- w_{\gamma} (f_1)} \cdots \lambda ^* _{\gamma} (f_c) t^{- w_{\gamma} (f_c)} \right)^d. 
\]
Therefore, for any $b_0 \in \mathbb{Z}_{\ge 0}$, if we define $b' _0 := \max \left\{ \left \lceil \frac{b_0}{d} - w_{\gamma}({\bf f}) \right \rceil, 0 \right\}$, we have 
\[
\operatorname{Cont}^{\ge b_0} \left( \lambda^* _{\gamma} \left( f_1 ^d \cdots f_c ^d \right) \right)
= \operatorname{Cont}^{\ge b' _0} \left( \lambda ^* _{\gamma} (f_1) t^{- w_{\gamma} (f_1)} \cdots \lambda ^* _{\gamma} (f_c) t^{- w_{\gamma} (f_c)} \right). 
\]
Thus, we have 
\begin{align*}
\operatorname{codim} \left( D_{\gamma, b_0, b_2} \right) - \frac{b_0}{d} 
&= \operatorname{codim}\left( C_{\gamma, b'_0, b_2} \right) - \frac{b_0}{d} \\
&\ge \operatorname{codim}\left( C_{\gamma, b'_0, b_2} \right) - b'_0 - w_{\gamma}({\bf f}). 
\end{align*}
On the other hand, for any $b_1 \in \mathbb{Z}_{\ge 0}$, 
if we define $b' _1 := d b_1 + d w_{\gamma}({\bf f})$, we have 
\[
\operatorname{Cont}^{\ge b' _1} \left( \lambda^* _{\gamma} \left( f_1 ^d \cdots f_c ^d \right) \right)
= \operatorname{Cont}^{\ge b_1} \left( \lambda ^* _{\gamma} (f_1) t^{- w_{\gamma} (f_1)} \cdots \lambda ^* _{\gamma} (f_c) t^{- w_{\gamma} (f_c)} \right). 
\]
Thus, we have 
\[
\operatorname{codim} \left( C_{\gamma, b_1, b_2} \right) - b_1 - w_{\gamma}({\bf f}) 
= 
\operatorname{codim} \left( D_{\gamma, b'_1, b_2} \right) - \frac{b'_1}{d}. 
\]
Therefore, the assertion follows from (i). 
\end{proof}

\begin{thm}\label{thm:PIA}
Let $\mathfrak{a} \subset \mathcal{O}_A$ be a non-zero ideal, 
and let $\tau$ be a non-negative real number. 
Suppose that $\mathfrak{a} \mathcal{O}_B \not = 0$. 
Let $x \in A$ be the origin. 
Suppose that $B$ is klt. 
Then, we have
\[
\operatorname{mld}_x \left(A, H_1 + \cdots + H_c, \mathfrak{a} ^{\tau} \right)
=\operatorname{mld}_x \left( B, \mathfrak{a}^{\tau} \mathcal{O}_B \right). 
\]
\end{thm}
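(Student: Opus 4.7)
The plan is to combine the two arc-space formulas and compare them via Lemma \ref{lem:EM8.4}, adapting the strategy of \cite{NS22} to the semi-invariant setting.

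First, I apply Lemma \ref{lem:property B_i}(2): since $d H_i = \operatorname{div}(f_i^d)$ on a neighborhood of $x$, the $\mathbb{Q}$-divisor $H_1 + \cdots + H_c$ contributes the same log discrepancy as the $\mathbb{R}$-ideal $(f_1^d \cdots f_c^d)^{1/d}$ near $x$, reducing the theorem to
\[
\operatorname{mld}_x \bigl(A, (f_1^d \cdots f_c^d)^{1/d}\mathfrak{a}^\delta \bigr) = \operatorname{mld}_x \bigl( B, \mathfrak{a}^\delta \mathcal{O}_B \bigr).
\]
I then apply Lemma \ref{lem:zure} to the LHS and Theorem \ref{thm:mld_hyperquot} (in the $C'$ form) to the RHS. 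Both infima carry the common offset $\operatorname{age}(\gamma) - w_\gamma({\bf f}) - \delta b$, where $b$ denotes the $\mathfrak{a}$-order. The remaining terms are $-\ell$ (with $\ell$ the order of $\lambda^*_\gamma({\bf f}) t^{-w_\gamma({\bf f})}$) on the LHS and $-e$ (with $e$ the order of $\operatorname{Jac}_{\overline{B}^{(\gamma)}/k[t]}$) on the RHS. Thus the problem reduces to matching, for each $(\gamma, b)$, the two quantities
\[
\inf_{\ell} \bigl[ \operatorname{codim}(C^{(A)}_{\gamma, \ell, b}) - \ell \bigr] \quad \text{and} \quad \inf_{e} \bigl[ \operatorname{codim}(C^{(B)}_{\gamma, b, e}) - e \bigr],
\]
where $C^{(A)}_{\gamma, \ell, b} \subset \overline{A}^{(\gamma)}_\infty$ is the cylinder from Lemma \ref{lem:zure} and $C^{(B)}_{\gamma, b, e} \subset \overline{B}^{(\gamma)}_\infty$ is the $C'$-cylinder from Theorem \ref{thm:mld_hyperquot}.

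One inequality follows directly from Lemma \ref{lem:EM8.4}. Since $\lambda^*_\gamma(f_i) t^{-w_\gamma(f_i)}$ vanishes on $\overline{B}^{(\gamma)}$, the intersection $C^{(A)}_{\gamma, \ell, b} \cap \overline{B}^{(\gamma)}_\infty$ is independent of $\ell$ and equals the cylinder $D^{(B)}_{\gamma, b}$ obtained from $C^{(B)}_{\gamma, b, e}$ by dropping the Jacobian condition. Since $\operatorname{codim}(D^{(B)}_{\gamma, b}) = \min_e \operatorname{codim}(C^{(B)}_{\gamma, b, e})$ by Definition \ref{defi:codim}(2), applying Lemma \ref{lem:EM8.4} with subvariety $\overline{B}^{(\gamma)} \subset \overline{A}^{(\gamma)}$, $d = \ell$, and the minimizing value $e_0 = e_0(\gamma, b)$ yields
\[
\operatorname{codim}(C^{(B)}_{\gamma, b, e_0}) - e_0 \le \operatorname{codim}(C^{(A)}_{\gamma, \ell, b}) - \ell
\]
for any $(\gamma, \ell, b)$. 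Taking infima gives the inequality $\operatorname{mld}_x(B, \mathfrak{a}^\delta \mathcal{O}_B) \le \operatorname{mld}_x(A, (f_1^d \cdots f_c^d)^{1/d} \mathfrak{a}^\delta)$.

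The reverse inequality is the main obstacle and requires a finer argument. The strategy, following \cite{NS22}, is to show the sharp version of Lemma \ref{lem:EM8.4}: for each $(\gamma, b, e)$ with $C^{(B)}_{\gamma, b, e} \neq \emptyset$, one chooses $\ell$ so that $\operatorname{codim}(C^{(A)}_{\gamma, \ell, b}) - \ell \le \operatorname{codim}(C^{(B)}_{\gamma, b, e}) - e$. Concretely, one lifts a generic arc $\alpha \in C^{(B)}_{\gamma, b, e}$ to a family of arcs $\alpha' \in \overline{A}^{(\gamma)}_\infty$ with prescribed finite orders $\operatorname{ord}_{\alpha'}\bigl(\lambda^*_\gamma(f_i) t^{-w_\gamma(f_i)}\bigr)$ summing to $\ell$, using the order-$e$ Jacobian condition to parametrize these lifts; the codimension bookkeeping via Propositions \ref{prop:EM6.2_k[t]} and \ref{prop:DL2_k[t]}, together with the order computations in Section \ref{section:order} (notably Lemmas \ref{lem:L3} and \ref{lem:L5}), yields the desired matching. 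The correction $w_\gamma({\bf f})$ enters symmetrically in both infima, so the adaptation of the invariant-case argument from \cite{NS22} goes through once the refinements of Sections \ref{section:order} and \ref{section:mld} are in place.
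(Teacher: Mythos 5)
Your reduction via Lemma \ref{lem:property B_i}(2), Lemma \ref{lem:zure} and Theorem \ref{thm:mld_hyperquot}, and your use of Lemma \ref{lem:EM8.4} to get $\operatorname{mld}_x(B,\mathfrak{a}^\delta\mathcal{O}_B)\le\operatorname{mld}_x(A,(f_1^d\cdots f_c^d)^{1/d}\mathfrak{a}^\delta)$, is exactly the paper's argument for what is in fact the \emph{hard} direction. Two points you gloss over there do need attention: Lemma \ref{lem:EM8.4} applies to an irreducible locally closed cylinder, so you must pass to an irreducible component of $C^{(A)}_{\gamma,\ell,b}$ computing its codimension (and then shrink it to the open subcylinder where the Jacobian order equals its minimum $b_3$, so that the intersection with $\overline{B}^{(\gamma)}_\infty$ lands inside a single $\operatorname{Cont}^{b_3}$-stratum); and, more importantly, you must justify that your minimizing value $e_0$ is finite, i.e.\ that $C^{(A)}_{\gamma,\ell,b}\cap\overline{B}^{(\gamma)}_\infty$ meets $\operatorname{Cont}^{e}(\operatorname{Jac}_{\overline{B}^{(\gamma)}/k[t]})$ for some finite $e$. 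This is precisely where the hypothesis that $B$ is klt enters (via \cite{NS22}*{Claim 5.2}); your proposal never uses the klt assumption, which is a warning sign.

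The genuine gap is in what you call ``the main obstacle,'' the reverse inequality $\operatorname{mld}_x(A,H_1+\cdots+H_c,\mathfrak{a}^\delta)\le\operatorname{mld}_x(B,\mathfrak{a}^\delta\mathcal{O}_B)$. You have the difficulty backwards: this is the \emph{easy} direction, and it follows immediately from Lemma \ref{lem:property B_i}(3), which gives $(K_A+H_1+\cdots+H_c)|_B=K_B$, by the standard adjunction comparison of log discrepancies on a common log resolution. Your proposed substitute --- lifting a generic arc of $C^{(B)}_{\gamma,b,e}$ to a family of arcs in $\overline{A}^{(\gamma)}_\infty$ with prescribed orders along $\lambda^*_\gamma(f_i)t^{-w_\gamma(f_i)}$ and ``codimension bookkeeping'' --- is not a proof and is not supported by the results you cite: Propositions \ref{prop:EM6.2_k[t]} and \ref{prop:DL2_k[t]} and Lemmas \ref{lem:L3} and \ref{lem:L5} control images of cylinders under the Denef--Loeser morphisms $\overline{\mu}_\gamma$ and $p$, not lifts from $\overline{B}^{(\gamma)}_\infty$ to the ambient $\overline{A}^{(\gamma)}_\infty$, and no ``sharp converse'' of Lemma \ref{lem:EM8.4} is established anywhere in the paper. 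Unless you actually prove such a lifting statement (which would amount to redoing a substantial piece of the Ein--Musta\c{t}\u{a} theory in the relative $k[t]$-setting), you should replace this entire paragraph by the one-line appeal to Lemma \ref{lem:property B_i}(3).
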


\begin{proof}
By Lemma \ref{lem:property B_i}(3), it is easy to see 
\[
\operatorname{mld}_x \left( A,H_1 + \cdots + H_c, \mathfrak{a} ^{\tau} \right) 
\le \operatorname{mld}_x \left( B, \mathfrak{a}^{\tau} \mathcal{O}_B \right).
\]
In what follows, we shall prove the opposite inequality. 

By Lemma \ref{lem:property B_i}(2), we have 
\[\tag{i}
\operatorname{mld}_x \left( A,H_1 + \cdots + H_c, \mathfrak{a} ^{\tau} \right) 
=
\operatorname{mld}_x \left( A, (f_1 ^d \cdots f_c ^d )^{\frac{1}{d}} \mathfrak{a} ^{\tau} \right). 
\]
By Lemma \ref{lem:zure}, we have
\begin{align*}\tag{ii}
&\operatorname{mld}_x \left( A, (f_1 ^d \cdots f_c ^d )^{\frac{1}{d}} \mathfrak{a} ^{\tau} \right) \\
&= \inf _{\gamma \in G, b_1, b_2 \in \mathbb{Z}_{\ge 0}}
\left \{ 
\operatorname{codim}_{\overline{A}^{(\gamma)}_\infty} (C_{\gamma, b_1, b_2}) + \operatorname{age}(\gamma) - w_{\gamma} ({\bf f}) - b_1 - \tau b_2 
\right \},
\end{align*}
where $C_{\gamma, b_1, b_2} \subset \overline{A}^{(\gamma)} _{\infty}$ is defined by
\begin{align*}
C_{\gamma, b_1, b_2} := {}
&\operatorname{Cont}^{\ge 1} \left(\mathfrak{m}_x \mathcal{O}_{\overline{A}^{(\gamma)}} \right)
\cap
\operatorname{Cont}^{\ge b_2} \left( \mathfrak{a} \mathcal{O}_{\overline{A}^{(\gamma)}} \right) \\
&\cap 
\operatorname{Cont}^{\ge b_1} \left( \lambda ^* _{\gamma}(f_1)t^{-w_\gamma(f_1)}\cdots \lambda ^* _{\gamma}(f_c)t^{-w_\gamma(f_c)} \right). 
\end{align*}
On the other hand, by Theorem \ref{thm:mld_hyperquot}, we have 
\begin{align*}\tag{iii}
&\operatorname{mld}_x \left( B, \mathfrak{a}^{\tau} \mathcal{O}_B \right)\\
&= \inf _{\gamma \in G, b_2, b_3 \in \mathbb{Z}_{\ge 0}}
\left \{ 
\operatorname{codim}_{\overline{B}^{(\gamma)}_\infty} \left( D_{\gamma, b_2, b_3} \right) + \operatorname{age}(\gamma)- w_\gamma({\bf f}) - b_3 - \tau b_2
\right \}, 
\end{align*}
where $D_{\gamma, b_2, b_3} \subset \overline{B}^{(\gamma)} _{\infty}$ denotes
\[
D_{\gamma, b_2, b_3} 
:= 
\operatorname{Cont}^{\ge 1} \left(\mathfrak{m}_x \mathcal{O}_{\overline{B}^{(\gamma)}} \right) \cap
\operatorname{Cont}^{\ge b_2}  \left(\mathfrak{a} \mathcal{O}_{\overline{B}^{(\gamma)}} \right) \cap 
\operatorname{Cont}^{b_3} \left(\operatorname{Jac}_{\overline{B}^{(\gamma)}/k[t]} \right). 
\]

We fix $\gamma \in G$, and $b_1, b_2 \in \mathbb{Z}_{\ge 0}$. 
By (i)-(iii), it is sufficient to find $b_3 \in \mathbb{Z}_{\ge 0}$ satisfying
\[\tag{iv}
\operatorname{codim}_{\overline{A}^{(\gamma)}_\infty} \left( C_{\gamma, b_1, b_2} \right) - b_1
\ge
\operatorname{codim}_{\overline{B}^{(\gamma)}_\infty} \left( D_{\gamma, b_2, b_3} \right) - b_3. 
\]
Take an irreducible component $C'_{\gamma, b_1, b_2} \subset C_{\gamma, b_1, b_2}$ satisfying 
\[\tag{v}
\operatorname{codim}_{\overline{A}^{(\gamma)}_\infty} \left( C_{\gamma, b_1, b_2} \right ) = \operatorname{codim}_{\overline{A}^{(\gamma)}_\infty} \left( C' _{\gamma, b_1, b_2} \right). 
\]
Set 
\[
b_3 := \min _{\alpha \in C'_{\gamma, b_1, b_2} \cap \overline{B}^{(\gamma)}_{\infty}} \operatorname{ord}_{\alpha} \left( \operatorname{Jac}_{\overline{B}^{(\gamma)}/k[t]} \right). 
\]
First, by \cite{NS22}*{Claim 5.2}, we have $b_3 < \infty$. 
Here, we use the assumption that $B$ is klt (see Remark \cite{NS22}*{Remark 5.3}). 
We set 
\[
C''_{\gamma, b_1, b_2} := C'_{\gamma, b_1, b_2} \cap \operatorname{Cont}^{\le b_3} 
\left( (i^*) ^{-1} \operatorname{Jac}_{\overline{B}^{(\gamma)}/k[t]} \right), 
\]
where $i$ is the inclusion $i : \overline{B}^{(\gamma)} \hookrightarrow \overline{A}^{(\gamma)}$. 
Since $C''_{\gamma, b_1, b_2}$ is a non-empty open subcylinder of an irreducible closed cylinder $C'_{\gamma , b_1, b_2}$, 
we have 
\[\tag{vi}
\operatorname{codim}_{\overline{A}^{(\gamma)}_\infty} \left( C'' _{\gamma, b_1, b_2} \right)
= 
\operatorname{codim}_{\overline{A}^{(\gamma)}_\infty} \left( C' _{\gamma, b_1, b_2} \right). 
\]
Next, by applying Lemma \ref{lem:EM8.4} to $C''_{\gamma, b_1, b_2}$ and $\lambda _{\gamma}(f_1)t^{-w_\gamma(f_1)}, \ldots, \lambda _{\gamma}(f_c)t^{-w_\gamma(f_c)}$, we have 
\[\tag{vii}
\operatorname{codim}_{\overline{A}^{(\gamma)}_\infty} 
\left( 
C''_{\gamma, b_1, b_2}
\right) 
+ b_3 - b_1
\ge
\operatorname{codim}_{\overline{B}^{(\gamma)}_{\infty}} \left(
C''_{\gamma, b_1, b_2} \cap \overline{B}^{(\gamma)}_{\infty}
\right).
\]
Since we have 
\[
C''_{\gamma, b_1, b_2} \cap \overline{B}^{(\gamma)}_{\infty}
= C'_{\gamma, b_1, b_2} \cap \overline{B}^{(\gamma)}_{\infty} \cap 
\operatorname{Cont}^{b_3} 
\left( \operatorname{Jac}_{\overline{B}^{(\gamma)}/k[t]} \right)
\subset D_{\gamma, b_2, b_3}, 
\]
we have 
\[\tag{viii}
\operatorname{codim}_{\overline{B}^{(\gamma)}_{\infty}} \left(
C''_{\gamma, b_1, b_2} \cap \overline{B}^{(\gamma)}_{\infty}
\right)
\ge 
\operatorname{codim}_{\overline{B}^{(\gamma)}_{\infty}} 
\left( D_{\gamma, b_2, b_3} \right). 
\]
By (v)-(viii), we conclude that this $b_3$ satisfies (iv). We complete the proof. 
\end{proof}

\begin{rmk}\label{rmk:multi_index2}
The formulas in Theorem \ref{thm:PIA} can be extended to $\mathbb{R}$-ideals due to Remark \ref{rmk:multi_index}. 
Let $\mathfrak{a}$ be an $\mathbb{R}$-ideal on $A$. 
Then we have
\[
\operatorname{mld}_x \bigl(A, H_1 + \cdots + H_c, \mathfrak{a} \bigr)
=\operatorname{mld}_x (B, \mathfrak{a}\mathcal O_{B}). 
\]
\end{rmk}

\section{PIA conjecture and LSC conjecture}\label{section:LSC}
We summarize our main theorem (Theorem \ref{thm:PIA}) with a little generalization. 
Note that Theorem \ref{thm:PIA} is nothing but 
Theorem \ref{thm:PIA2} for the case when $y$ is the image $x$ of the origin of $\mathbb{A}^N _k$. 

\begin{thm}\label{thm:PIA2}
Let $G \subset {\rm GL}_N(k)$ be a finite subgroup which does not contain a pseudo-reflection. 
Let $X := \mathbb{A}_k^N / G$ be the quotient variety. 
Let $Z \subset X$ be the minimum closed subset such that $\mathbb{A}_k^N \to X$ is \'{e}tale outside $Z$. 
Let $\overline{Y}$ be a subvariety of $\mathbb{A}_k^N$ of codimension $c$. 
Suppose that $\overline{Y} \subset \mathbb{A}_k^N$ is defined by $c$ $G$-semi-invariant equations 
$f_1, \ldots , f_c \in k[x_1, \ldots, x_N]$. 
Let $Y := \overline{Y}/ G$ be the quotient variety. 
We assume that $Y$ has only klt singularities and $\operatorname{codim} _Y (Y \cap Z) \ge 2$. 
Then, for any $\mathbb{R}$-ideal sheaf $\mathfrak{a}$ on $X$ with $\mathfrak{a} \mathcal{O}_Y \not = 0$, and any closed point $y \in Y$, we have 
\[
\operatorname{mld}_y \bigl( Y, \mathfrak{a}\mathcal{O}_Y \bigr) = 
\operatorname{mld}_y \bigl( X, (f_1 ^d  \cdots f_{c} ^d )^{\frac{1}{d}} \mathfrak{a} \bigr). 
\]
\end{thm}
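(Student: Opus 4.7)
The plan is to reduce the statement to the case $y = x$ (the image of the origin of $\mathbb{A}^N_k$), which is handled by Theorem~\ref{thm:PIA}. For $y = x$: by Lemma~\ref{lem:property B_i}(2), $d H_i = \operatorname{div}(f_i^d)$ on a neighborhood of $Y$, whence $\operatorname{mld}_x(X, (f_1^d \cdots f_c^d)^{1/d}\mathfrak{a}) = \operatorname{mld}_x(X, H_1 + \cdots + H_c, \mathfrak{a})$, and the $\mathbb{R}$-ideal extension of Theorem~\ref{thm:PIA} (via Remark~\ref{rmk:multi_index2}) equates this with $\operatorname{mld}_x(Y, \mathfrak{a}\mathcal{O}_Y)$. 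So it suffices to reduce a general $y \in Y$ to the origin case.

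For this reduction I would pick a preimage $\overline{y} \in \overline{Y}$ of $y$ and set $H := \operatorname{Stab}_G(\overline{y}) \subset G$. Since every $\gamma \in H$ is a linear automorphism of $\mathbb{A}^N_k$ fixing $\overline{y}$, the change of coordinates $\tilde{x}_i := x_i - (\overline{y})_i$ keeps the $H$-action linear (same matrices, new origin at $\overline{y}$), and the translated polynomials $\tilde{f}_i(\tilde{x}) := f_i(\tilde{x} + \overline{y})$ form a regular sequence of $H$-semi-invariant elements in $(\tilde{x}_1, \ldots, \tilde{x}_N)$ with $w_\gamma(\tilde{f}_i) = w_\gamma(f_i)$ for $\gamma \in H$. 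Set $X' := \mathbb{A}^N_k/H$, $\overline{Y}' := \overline{Y} - \overline{y}$ (cut out by the $\tilde{f}_i$ in $\mathbb{A}^N_k$), and $Y' := \overline{Y}'/H$.

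Next I would build a Zariski-local isomorphism between $(X, y)$ and $(X', 0)$ (and between $(Y, y)$ and $(Y', 0)$) to transfer mld computations. Using finiteness of $G$, choose an $H$-invariant affine open neighborhood $V$ of $\overline{y}$ in $\mathbb{A}^N_k$ with $\gamma V \cap V = \emptyset$ for every $\gamma \in G \setminus H$; then $G \cdot V = \bigsqcup_{[\gamma] \in G/H} \gamma V$ and the inclusion induces $V/H \cong (G \cdot V)/G$, identifying an open neighborhood of $y$ in $X$ with $V/H$; composing with the $H$-equivariant translation $V \cong V - \overline{y}$ gives the desired identification with an open neighborhood of $0$ in $X'$, and restricting to $V \cap \overline{Y}$ yields the analogous statement for $Y$ and $Y'$. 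Under this isomorphism, $(f_1^d \cdots f_c^d)^{1/d}\mathfrak{a}$ corresponds to $(\tilde{f}_1^d \cdots \tilde{f}_c^d)^{1/d}\mathfrak{a}'$ for some $\mathbb{R}$-ideal $\mathfrak{a}'$ on $X'$ with $\mathfrak{a}'\mathcal{O}_{Y'} \neq 0$. Since the minimal log discrepancy at a closed point is Zariski-local, proving the equality at the origin of the new setup suffices.

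It then remains to apply Theorem~\ref{thm:PIA} to $(H, \tilde{f}_i, \overline{Y}', Y', X')$ at the origin after verifying its hypotheses: $H$ contains no pseudo-reflection because $H \subset G$; $Y'$ is klt at origin via the local isomorphism with $(Y, y)$; and $\operatorname{codim}_{Y'}(Y' \cap Z_H) \geq 2$ at origin holds because, for $p \in V$, any $\gamma \in G$ with $\gamma p = p$ forces $p \in V \cap \gamma V$ hence $\gamma \in H$, so the branch loci for $G$ and $H$ coincide on a neighborhood of $\overline{y}$ and the original codimension hypothesis transfers. The hard part will be the mismatch between the \emph{global} hypotheses of Section~\ref{section:setting} on $Y'$ (normality, klt, codimension-$2$ branch locus) and the merely \emph{local} versions supplied by the reduction; I would resolve this either by shrinking to a Zariski-open subset of $X'$ containing $0$ on which the global conditions hold (the mld at origin being unaffected), or by noting that the proof of Theorem~\ref{thm:PIA} via Theorem~\ref{thm:mld_hyperquot} and the Denef--Loeser machinery uses these hypotheses only at the specified closed point. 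Once this is addressed, Theorem~\ref{thm:PIA} at origin of $(X', Y')$ yields $\operatorname{mld}_0(X', (\tilde{f}_1^d \cdots \tilde{f}_c^d)^{1/d} \mathfrak{a}') = \operatorname{mld}_0(Y', \mathfrak{a}' \mathcal{O}_{Y'})$, and transferring back via the Zariski-local isomorphism gives the required equality at $y$.
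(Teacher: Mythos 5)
Your proposal is correct and takes essentially the same route as the paper: the paper also reduces to the origin by passing to the stabilizer $G_{y'}$ of a preimage $y'$ of $y$, using that $\mathbb{A}^N_k/G_{y'} \to \mathbb{A}^N_k/G$ is \'{e}tale at $y$ and that minimal log discrepancies are preserved under \'{e}tale morphisms, then translating coordinates so that $y'$ becomes the origin (the stabilizer action staying linear) and invoking Theorem \ref{thm:PIA}. The ``hard part'' you flag dissolves: the global hypotheses of Section \ref{section:setting} do transfer to $(X',Y')$, since $\overline{Y}'\cong\overline{Y}$ as varieties and the ramification locus of the $H$-action is contained in that of the $G$-action (stabilizers in $H$ are contained in stabilizers in $G$), so normality, kltness, and the codimension-two condition hold globally and no shrinking is required.
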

\begin{proof}
We follow the argument in the proof of \cite{NS22}*{Theorem 6.2}. 
We fix a closed point $y \in Y$. 
Take a closed point $y' \in \mathbb{A}_k^N$ whose image in $Y$ is $y$. 
Let $G_{y'} := \{ g \in G \mid g(y') = y'\}$ be the stabilizer group of $y'$ and let $Z' \subset \mathbb{A}_k ^N / G_{y'}$ be the minimum closed subset such that $\mathbb{A}_k^N \to \mathbb{A}_k ^N / G_{y'}$ is \'{e}tale outside $Z'$. 
Then $\mathbb{A}_k ^N / G_{y'} \to \mathbb{A}_k ^N / G$ is \'{e}tale at $y$ and $\operatorname{codim} _{\overline{Y}/G_{y'}} \left( \left( \overline{Y}/G_{y'} \right) \cap Z' \right) \ge 2$. 
Note that the minimal log discrepancy is preserved under an \'{e}tale map. 
Hence by replacing $X = \mathbb{A}_k ^N / G$ by $\mathbb{A}_k ^N / G_{y'}$ and changing the coordinate of $\mathbb{A}_k ^N$, 
we may assume that $y'$ is the origin and the group action is still linear. 
Then we have 
\[
\operatorname{mld}_y \bigl( Y, \mathfrak{a}\mathcal{O}_Y \bigr) = 
\operatorname{mld}_y \bigl( X, (f_1^d \cdots f_{c}^d)^{\frac{1}{d}} \mathfrak{a} \bigr). 
\]
by Theorem \ref{thm:PIA}, which completes the proof. 
\end{proof}

The PIA conjecture holds for quotient singularities $X$ and a prime Weil divisor $D$ which is a quotient of a $G$-invariant divisor. Note that $D$ may not be Cartier.
\begin{cor}\label{cor:PIA4}
Let $G$, $X$ and $Z$ be the group and the schemes as in the statement of Theorem \ref{thm:PIA2}. 
Let $\overline{D}$ be a $G$-invariant Cartier divisor of $\mathbb{A}_k^N$. 
Let $D := \overline{D}/ G$ be the quotient variety. 
We assume that $D$ has only klt singularities and $\operatorname{codim} _D (D \cap Z) \ge 2$. 
Then, for any $\mathbb{R}$-ideal sheaf $\mathfrak{a}$ on $X$ with $\mathfrak{a} \mathcal{O}_D \not = 0$, and any closed point $y \in D$, we have 
\[
\operatorname{mld}_y \bigl( D, \mathfrak{a}\mathcal{O}_D \bigr) = 
\operatorname{mld}_y \bigl( X, D, \mathfrak{a} \bigr). 
\]
\end{cor}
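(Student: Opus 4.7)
The plan is to reduce the corollary directly to Theorem \ref{thm:PIA2} by producing a $G$-semi-invariant defining equation for $\overline{D}$. Since $\operatorname{Pic}(\mathbb{A}^N_k) = 0$, the Cartier divisor $\overline{D}$ is of the form $\operatorname{div}(f)$ for some $f \in k[x_1,\ldots,x_N]$. The $G$-invariance of $\overline{D}$ as a divisor means that $\sigma(f)/f \in k^{\times}$ for every $\sigma \in G$, and since $|G| = d$, this constant is a $d$-th root of unity; thus $f$ is $G$-semi-invariant in the sense of Definition \ref{defi:semiinv}. If $\overline{D}$ is a priori reducible, the requirement that $D = \overline{D}/G$ be a variety forces $G$ to act transitively on the irreducible components of $\overline{D}_{\mathrm{red}}$, so after replacing $\overline{D}$ by a single component and $G$ by its stabilizer we may assume $\overline{D}$ is prime, which is the setting required by Theorem \ref{thm:PIA2}.

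Next, I would apply Theorem \ref{thm:PIA2} with $c=1$, $\overline{Y} = \overline{D}$, $f_1 = f$, and $Y = D$. The hypotheses that $D$ be klt and $\operatorname{codim}_D(D \cap Z) \ge 2$ are part of our assumption, as is $\mathfrak{a}\mathcal{O}_D \ne 0$. This yields
\[
\operatorname{mld}_y \bigl( D, \mathfrak{a}\mathcal{O}_D \bigr) = \operatorname{mld}_y \bigl( X, (f^d)^{\frac{1}{d}} \mathfrak{a} \bigr).
\]

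Finally, I would identify the $\mathbb{R}$-ideal $(f^d)^{1/d}$ with the $\mathbb{Q}$-divisor $D$ at the level of log discrepancies. By Lemma \ref{lem:property B_i}(2) applied with $c=1$, on some open neighborhood of $D$ in $X$ we have $dD = \operatorname{div}_X(f^d)$; in particular $D$ is $\mathbb{Q}$-Cartier at $y$. Consequently, for any proper birational morphism $\pi \colon X' \to X$ from a normal $X'$ and any prime divisor $E$ on $X'$ with center $y$,
\[
\operatorname{ord}_E(\pi^* D) = \tfrac{1}{d} \operatorname{ord}_E(f^d) = \operatorname{ord}_E \bigl( (f^d)^{1/d} \bigr).
\]
Comparing with the definitions of log discrepancies for log triples and log pairs, we obtain $a_E(X, D, \mathfrak{a}) = a_E\bigl(X, (f^d)^{1/d}\mathfrak{a}\bigr)$ for every such $E$, hence
\[
\operatorname{mld}_y(X, D, \mathfrak{a}) = \operatorname{mld}_y\bigl(X, (f^d)^{1/d}\mathfrak{a}\bigr).
\]
Combining the two displayed equalities gives the desired statement.

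Since the heavy lifting is performed by Theorem \ref{thm:PIA2}, the only points requiring care are the extraction of a $G$-semi-invariant defining equation (together with the possible reduction to the prime case) and the identification of $(f^d)^{1/d}$ with $D$ via Lemma \ref{lem:property B_i}(2). Neither poses a genuine obstacle, so the substantive content of the argument is indeed already contained in Theorem \ref{thm:PIA2}.
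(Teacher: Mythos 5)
Your argument is correct and is exactly the paper's (the paper's entire proof is the single sentence that the corollary is the case $c=1$ of Theorem \ref{thm:PIA2}); you have simply made explicit the two details the authors leave implicit, namely that a $G$-invariant principal divisor on $\mathbb{A}^N_k$ is cut out by a $G$-semi-invariant polynomial (the cocycle $\sigma\mapsto\sigma(f)/f$ is a character of $G$, hence valued in $d$-th roots of unity), and that $(f^d)^{1/d}$ computes the same multiplicities as $D$ along every divisor over $X$ by Lemma \ref{lem:property B_i}(2), so that $\operatorname{mld}_y(X,D,\mathfrak a)=\operatorname{mld}_y\bigl(X,(f^d)^{1/d}\mathfrak a\bigr)$.

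One caveat on your side remark about reducible $\overline{D}$: replacing $G$ by the stabilizer $H$ of a component replaces the ambient quotient $X=\mathbb{A}^N_k/G$ by $\mathbb{A}^N_k/H$, so this reduction does not directly yield the stated equality of mld's on $X$ (one would additionally have to compare $\operatorname{mld}$ across the finite cover $\mathbb{A}^N_k/H\to X$, which is not \'etale at $y$ in general). This is harmless for the corollary as intended --- the hypothesis that $D=\overline{D}/G$ is ``the quotient variety,'' matched against Theorem \ref{thm:PIA2}'s requirement that $\overline{Y}$ be a subvariety, means $\overline{D}$ is taken to be integral, and the paper does not treat the reducible case either --- but as written that step of your reduction is not justified.
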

\begin{proof}
This theorem follows from the case $c = 1$ in Theorem \ref{thm:PIA2}.
\end{proof}

As a corollary of Theorem \ref{thm:PIA2} (Theorem \ref{thm:PIA}), we can prove the PIA conjecture for the $Y$ and its klt Cartier divisor $D$. 

\begin{thm}\label{thm:PIA3}
Let $Y$ and $Z$ be the schemes as in the statement of Theorem \ref{thm:PIA2}. 
Let $D$ be a Cartier divisor on $Y$. 
Suppose that $\operatorname{codim}_D (D \cap Z) \ge 2$ and that $D$ is klt at a closed point $y \in D$. 
Then, for any $\mathbb{R}$-ideal sheaf $\mathfrak{a}$ on $Y$ with $\mathfrak{a} \mathcal{O}_D \not = 0$, we have 
\[
\operatorname{mld}_y \left( Y,D, \mathfrak{a} \right) = 
\operatorname{mld}_y \left( D, \mathfrak{a} \mathcal{O}_D \right). 
\]
\end{thm}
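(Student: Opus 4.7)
The plan is to deduce Theorem \ref{thm:PIA3} from Theorem \ref{thm:PIA2} by applying the latter twice, once to the pair $Y \subset X$ and once to the pair $D \subset X$, and then matching the two resulting formulas.

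First, I would reduce to a local situation near $y$. Since $D$ is a Cartier divisor on $Y = \operatorname{Spec}(R^G/I_Y)$ with $R = k[x_1, \ldots, x_N]$, a local defining equation of $D$ lifts to an element $h \in R^G$. Being $G$-invariant, $h$ is automatically $G$-semi-invariant with $w_\gamma(h) = 0$ for every $\gamma \in G$. Let $\overline{D} \subset \mathbb{A}_k^N$ be the closed subscheme cut out by $f_1, \ldots, f_c, h$; then $\overline{D}/G = D$ near $y$, and $\overline{D}$ is defined by $c+1$ semi-invariant equations. I would also lift $\mathfrak{a}$ on $Y$ to an $\mathbb{R}$-ideal sheaf $\tilde{\mathfrak{a}}$ on $X = \mathbb{A}_k^N/G$ with $\tilde{\mathfrak{a}}\mathcal{O}_Y = \mathfrak{a}$, by lifting generators.

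Next, I would apply Theorem \ref{thm:PIA2} to the pair $Y \subset X$ using the $\mathbb{R}$-ideal $(h)\tilde{\mathfrak{a}}$ on $X$, obtaining
\[
\operatorname{mld}_y(Y, D, \mathfrak{a}) \,=\, \operatorname{mld}_y(Y, (h)\mathfrak{a}) \,=\, \operatorname{mld}_y\!\left(X,\, (f_1^d \cdots f_c^d)^{1/d}(h)\tilde{\mathfrak{a}}\right),
\]
where the first equality is the standard identity for a log triple with Cartier boundary: replacing $D$ by the principal $\mathbb{R}$-ideal $(h)$ leaves log discrepancies unchanged (since $\operatorname{ord}_E(f^*D) = \operatorname{ord}_E((h)\mathcal{O}_{X'})$ for any $f \colon X' \to X$ and any divisor $E$). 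Then I would apply Theorem \ref{thm:PIA2} to the pair $D \subset X$ using the $\mathbb{R}$-ideal $\tilde{\mathfrak{a}}$, obtaining
\[
\operatorname{mld}_y(D, \mathfrak{a}\mathcal{O}_D) \,=\, \operatorname{mld}_y\!\left(X,\, (f_1^d \cdots f_c^d h^d)^{1/d}\tilde{\mathfrak{a}}\right).
\]
Because $h$ is $G$-invariant with $w_\gamma(h) = 0$, the factor $(h^d)^{1/d}$ equals $(h)$ as an $\mathbb{R}$-ideal, so the two right-hand sides coincide and the desired identity follows.

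The main obstacle will be verifying the hypotheses of Theorem \ref{thm:PIA2} for $D$: namely, that $\overline{D}$ is a subvariety of codimension $c+1$, that $D$ is klt, and that $\operatorname{codim}_D(D \cap Z) \ge 2$. The last two are given. For integrality, I would work near a preimage $y' \in \mathbb{A}_k^N$ of $y$: since $\overline{Y}$ is normal by Lemma \ref{lem:etaleness of B}(2), it is locally integral at $y'$, and since $D$ is a proper divisor on the integral scheme $Y$, the image of $h$ in $\mathcal{O}_{\overline{Y}, y'}$ is nonzero and hence a nonzero divisor. Thus $f_1, \ldots, f_c, h$ is a regular sequence, $\overline{D}$ is a local complete intersection near $y'$, and Serre's criterion (exactly as in Lemma \ref{lem:etaleness of B}) shows that $\overline{D}$ is normal and locally integral at $y'$. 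This local reduction is compatible with the stabilizer-reduction already carried out in the proof of Theorem \ref{thm:PIA2}, so the black-box application is justified.
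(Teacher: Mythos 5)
Your proposal is correct and takes essentially the same route as the paper: there, after the stabilizer reduction to $y=x$, one takes $g\in\mathcal{O}_{X,x}$ whose image defines $D$ (automatically $G$-invariant, hence semi-invariant of weight zero) and applies Theorem \ref{thm:PIA} (the local form of Theorem \ref{thm:PIA2}) twice, once to $Y\subset X$ with the $\mathbb{R}$-ideal $g\cdot\mathfrak{b}$ and once to $D\subset X$ defined by $f_1,\ldots,f_c,g$, identifying the two sides exactly as you do via $(g^d)^{1/d}=(g)$. Your verification that $f_1,\ldots,f_c,h$ is a regular sequence and that $\overline{D}$ is normal is a bit more explicit than the paper, which leaves these hypotheses implicit.
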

\begin{proof}
By the same argument as in the proof of Theorem \ref{thm:PIA2}, we may assume that $y = x$, where $x$ is the image of the origin of $\mathbb{A}^N _k$. 
Take $g \in \mathcal{O}_{X, x}$ such that its image 
$\overline{g} \in \mathcal{O}_{Y, x}$ defines $D$ at $x$. 
Take an $\mathbb{R}$-ideal sheaf $\mathfrak{b}$ on $X$ satisfying $\mathfrak{a} = \mathfrak{b} \mathcal{O}_Y$.
Then, by applying Theorem \ref{thm:PIA} twice (cf.\ \cite{NS22}*{Remark 5.5}), we have
\begin{align*}
\operatorname{mld}_{x} \left( Y,D, \mathfrak{a} \right) 
&= 
\operatorname{mld}_{x} \left( Y, (g \cdot \mathfrak{b}) \mathcal{O}_Y  \right) \\
&=
\operatorname{mld}_{x} \left( X, (f_1^d \cdots f_{c}^d \cdot g^d)^{\frac{1}{d}}\mathfrak{b} \right) \\
&=
\operatorname{mld}_{x} ( D, \mathfrak{a} \mathcal{O}_D ), 
\end{align*}
which completes the proof. 
\end{proof}

As a corollary of Theorem \ref{thm:PIA2}, we can prove the lower semi-continuity of minimal log discrepancies for the same $Y$. 

\begin{thm}\label{thm:LSC_general}
Let $Y$ be the scheme as in the statement of Theorem \ref{thm:PIA2}. 
Then, for any $\mathbb{R}$-ideal sheaf $\mathfrak{b}$ on $Y$ the function 
\[
|Y| \to \mathbb{R}_{\ge 0} \cup \{ - \infty \}; \quad y \mapsto \operatorname{mld}_y(Y,\mathfrak{b})
\]
is lower semi-continuous, where we denote by $|Y|$ the set of all closed points of $Y$ with the Zariski topology. 
\end{thm}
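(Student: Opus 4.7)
The plan is to reduce the lower semi-continuity on $Y$ to the already-established lower semi-continuity on the ambient quotient singularity $X = \mathbb{A}_k^N/G$ (proved in \cite{Nak16}), via the precise PIA formula provided by Theorem \ref{thm:PIA2}. In one sentence: I lift $\mathfrak{b}$ to an $\mathbb{R}$-ideal $\mathfrak{a}$ on $X$, use Theorem \ref{thm:PIA2} to rewrite $\operatorname{mld}_y(Y,\mathfrak{b})$ as the minimal log discrepancy of a fixed log pair on $X$ (uniformly in $y$), and then invoke the LSC conjecture on the quotient singularity $X$.

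Write $\mathfrak{b} = \prod_{i=1}^s \mathfrak{b}_i^{r_i}$ with non-zero coherent ideal sheaves $\mathfrak{b}_i \subset \mathcal{O}_Y$ and positive real numbers $r_i$. For each $i$, define a coherent ideal sheaf $\mathfrak{a}_i \subset \mathcal{O}_X$ as the preimage of $\mathfrak{b}_i$ under the surjection $\mathcal{O}_X \twoheadrightarrow \mathcal{O}_Y$; then $\mathfrak{a}_i \mathcal{O}_Y = \mathfrak{b}_i$, and $\mathfrak{a}_i$ is non-zero because $\mathfrak{b}_i$ is. Setting $\mathfrak{a} := \prod_{i=1}^s \mathfrak{a}_i^{r_i}$ gives an $\mathbb{R}$-ideal sheaf on $X$ with $\mathfrak{a}\mathcal{O}_Y = \mathfrak{b} \ne 0$, so Theorem \ref{thm:PIA2} applies at every closed point $y \in Y$ and yields
\[
\operatorname{mld}_y(Y, \mathfrak{b}) \;=\; \operatorname{mld}_y \bigl( X,\, (f_1^d \cdots f_c^d)^{1/d}\, \mathfrak{a} \bigr).
\]
Crucially, the $\mathbb{R}$-ideal $\mathfrak{c} := (f_1^d \cdots f_c^d)^{1/d}\mathfrak{a}$ on $X$ is independent of the point $y$, so this is a single identity of functions on $|Y|$.

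Finally, since $X$ is a quotient variety, the LSC conjecture for $(X,\mathfrak{c})$ is established in \cite{Nak16}; hence the function $x \mapsto \operatorname{mld}_x(X,\mathfrak{c})$ is lower semi-continuous on $|X|$. Restricting to the closed subset $|Y| \subset |X|$ (whose subspace topology agrees with its Zariski topology), the function $y \mapsto \operatorname{mld}_y(X,\mathfrak{c})$ is lower semi-continuous on $|Y|$, and by the identity above so is $y \mapsto \operatorname{mld}_y(Y, \mathfrak{b})$. In this argument there is essentially no hard step: the entire difficulty has been packaged into Theorem \ref{thm:PIA2}, whose role is precisely to transport the LSC statement from the quotient singularity $X$ down to its complete-intersection-type subvariety $Y$; the only points to verify are the non-vanishing $\mathfrak{a}\mathcal{O}_Y \ne 0$ (automatic from the construction) and the fact that $(X,\mathfrak{c})$ is a genuine log pair, which follows from the $\mathbb{Q}$-Gorensteinness of the quotient singularity $X$.
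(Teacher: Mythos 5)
Your proposal is correct and follows essentially the same route as the paper: lift $\mathfrak{b}$ to an $\mathbb{R}$-ideal $\mathfrak{a}$ on $X$ with $\mathfrak{a}\mathcal{O}_Y=\mathfrak{b}$, apply Theorem \ref{thm:PIA2} to identify $\operatorname{mld}_y(Y,\mathfrak{b})$ with $\operatorname{mld}_y\bigl(X,(f_1^d\cdots f_c^d)^{1/d}\mathfrak{a}\bigr)$ uniformly in $y$, and invoke the LSC result for quotient singularities from \cite{Nak16}. The only difference is that you spell out the construction of $\mathfrak{a}$ as a preimage, which the paper leaves implicit.
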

\begin{proof}
We use the same notations as in Theorem \ref{thm:PIA2}. 
Take an $\mathbb{R}$-ideal sheaf $\mathfrak{a}$ on $X$ satisfying $\mathfrak{a} \mathcal{O}_Y = \mathfrak{b}$. Then, by Theorem \ref{thm:PIA2}, for any closed point $y \in Y$, we have 
\[
\operatorname{mld}_y \bigl( Y, \mathfrak{b} \bigr) = 
\operatorname{mld}_y \bigl( X, (f_1 ^d \cdots f_{c} ^d)^{\frac{1}{d}} \mathfrak{a} \bigr). 
\]
Since the lower semi-continuity is known for $X$ by \cite{Nak16}*{Corollary 1.3}, 
we can conclude the lower semi-continuity for $Y$. 
\end{proof}

\begin{bibdiv}
\begin{biblist*}

\bib{92}{collection}{
   title={Flips and abundance for algebraic threefolds},
   note={Papers from the Second Summer Seminar on Algebraic Geometry held at
   the University of Utah, Salt Lake City, Utah, August 1991;
   Ast\'{e}risque No. 211 (1992) (1992)},
   publisher={Soci\'{e}t\'{e} Math\'{e}matique de France, Paris},
   date={1992}, 
}

\bib{Amb99}{article}{
   author={Ambro, Florin},
   title={On minimal log discrepancies},
   journal={Math. Res. Lett.},
   volume={6},
   date={1999},
   number={5-6},
   pages={573--580},
}

\bib{CLNS}{book}{
   author={Chambert-Loir, Antoine},
   author={Nicaise, Johannes},
   author={Sebag, Julien},
   title={Motivic integration},
   series={Progress in Mathematics},
   volume={325},
   publisher={Birkh\"{a}user/Springer, New York},
   date={2018},
}

\bib{DL02}{article}{
   author={Denef, Jan},
   author={Loeser, Fran\c{c}ois},
   title={Motivic integration, quotient singularities and the McKay
   correspondence},
   journal={Compositio Math.},
   volume={131},
   date={2002},
   number={3},
   pages={267--290},
}

\bib{EM04}{article}{
   author={Ein, Lawrence},
   author={Musta{\c{t}}{\v{a}}, Mircea},
   title={Inversion of adjunction for local complete intersection varieties},
   journal={Amer. J. Math.},
   volume={126},
   date={2004},
   number={6},
   pages={1355--1365},
}

\bib{EM09}{article}{
   author={Ein, Lawrence},
   author={Musta\c{t}\u{a}, Mircea},
   title={Jet schemes and singularities},
   conference={
      title={Algebraic geometry---Seattle 2005. Part 2},
   },
   book={
      series={Proc. Sympos. Pure Math.},
      volume={80},
      publisher={Amer. Math. Soc., Providence, RI},
   },
   date={2009},
   pages={505--546},
}

\bib{EMY03}{article}{
   author={Ein, Lawrence},
   author={Musta{\c{t}}{\u{a}}, Mircea},
   author={Yasuda, Takehiko},
   title={Jet schemes, log discrepancies and inversion of adjunction},
   journal={Invent. Math.},
   volume={153},
   date={2003},
   number={3},
   pages={519--535},
}

\bib{Eis95}{book}{
   author={Eisenbud, David},
   title={Commutative algebra},
   series={Graduate Texts in Mathematics},
   volume={150},
   note={With a view toward algebraic geometry},
   publisher={Springer-Verlag, New York},
   date={1995},
}

\bib{GHS03}{article}{
   author={Graber, Tom},
   author={Harris, Joe},
   author={Starr, Jason},
   title={Families of rationally connected varieties},
   journal={J. Amer. Math. Soc.},
   volume={16},
   date={2003},
   number={1},
   pages={57--67},
}

\bib{HM07}{article}{
   author={Hacon, Christopher D.},
   author={Mckernan, James},
   title={On Shokurov's rational connectedness conjecture},
   journal={Duke Math. J.},
   volume={138},
   date={2007},
   number={1},
   pages={119--136},
}

\bib{Har77}{book}{
   author={Hartshorne, Robin},
   title={Algebraic geometry},
   note={Graduate Texts in Mathematics, No. 52},
   publisher={Springer-Verlag, New York-Heidelberg},
   date={1977},
}

\bib{Kol13}{book}{
   author={Koll{\'a}r, J{\'a}nos},
   title={Singularities of the minimal model program},
   series={Cambridge Tracts in Mathematics},
   volume={200},
   note={With a collaboration of S\'andor Kov\'acs},
   publisher={Cambridge University Press, Cambridge},
   date={2013},
}

\bib{KM98}{book}{
   author={Koll{\'a}r, J{\'a}nos},
   author={Mori, Shigefumi},
   title={Birational geometry of algebraic varieties},
   series={Cambridge Tracts in Mathematics},
   volume={134},
   publisher={Cambridge University Press, Cambridge},
   date={1998},
}

\bib{Mat89}{book}{
   author={Matsumura, Hideyuki},
   title={Commutative ring theory},
   series={Cambridge Studies in Advanced Mathematics},
   volume={8},
   edition={2},
   note={Translated from the Japanese by M. Reid},
   publisher={Cambridge University Press, Cambridge},
   date={1989},
}

\bib{Mor85}{article}{
   author={Mori, Shigefumi},
   title={On $3$-dimensional terminal singularities},
   journal={Nagoya Math. J.},
   volume={98},
   date={1985},
   pages={43--66},
}

\bib{Nak16}{article}{
   author={Nakamura, Yusuke},
   title={On semi-continuity problems for minimal log discrepancies},
   journal={J. Reine Angew. Math.},
   volume={711},
   date={2016},
   pages={167--187},
}

\bib{NS22}{article}{
   author={Nakamura, Yusuke},
   author={Shibata, Kohsuke},
   title={Inversion of adjunction for quotient singularities},
   journal={Algebr. Geom.},
   volume={9},
   date={2022},
   number={2},
   pages={214--251},
}

\bib{NS2}{article}{
   author={Nakamura, Yusuke},
   author={Shibata, Kohsuke},
   title={Inversion of adjunction for quotient singularities II: non-linear
   actions},
   journal={Algebr. Geom.},
   volume={12},
   date={2025},
   number={4},
   pages={443--496},
}

\bib{NS4}{article}{
   author={Nakamura, Yusuke},
   author={Shibata, Kohsuke},
   title={A counterexample to the PIA conjecture for minimal log discrepancies},
   journal={to appear in Duke Math. J.},
   eprint={arXiv:2404.06164v3}
}

\bib{Rei80}{article}{
   author={Reid, Miles},
   title={Canonical $3$-folds},
   conference={
      title={Journ\'ees de G\'eometrie Alg\'ebrique d'Angers, Juillet
      1979/Algebraic Geometry, Angers, 1979},
   },
   book={
      publisher={Sijthoff \& Noordhoff, Alphen aan den Rijn---Germantown,
   Md.},
   },
   date={1980},
   pages={273--310},
}

\bib{Rei81}{article}{
   author={Reid, Miles},
   title={Minimal models of canonical $3$-folds},
   conference={
      title={Algebraic varieties and analytic varieties},
      address={Tokyo},
      date={1981},
   },
   book={
      series={Adv. Stud. Pure Math.},
      volume={1},
      publisher={North-Holland, Amsterdam},
   },
   date={1983},
   pages={131--180},
}

\bib{Seb04}{article}{
   author={Sebag, Julien},
   title={Int\'{e}gration motivique sur les sch\'{e}mas formels},
   language={French, with English and French summaries},
   journal={Bull. Soc. Math. France},
   volume={132},
   date={2004},
   number={1},
   pages={1--54},
}

\bib{Sho04}{article}{
   author={Shokurov, V. V.},
   title={Letters of a bi-rationalist. V. Minimal log discrepancies and
   termination of log flips},
   journal={Tr. Mat. Inst. Steklova},
   volume={246},
   date={2004},
   number={Algebr. Geom. Metody, Svyazi i Prilozh.},
   pages={328--351},
   translation={
      journal={Proc. Steklov Inst. Math.},
      date={2004},
      number={3 (246)},
      pages={315--336},
   },
}

\bib{Sta}{article}{
author={The Stacks Project Authors},
   title={The Stacks Project},
   eprint={https://stacks.math.columbia.edu},
   label={Stack},
}

\end{biblist*}
\end{bibdiv}
\end{document}